\DeclareMathOperator*{\argmax}{arg\,max}
\newtheorem{theorem}{Theorem}[section]
\newtheorem{lemma}[theorem]{Lemma}
\newtheorem{definition1}{Definition}[section]
\newtheorem{observe}{Observation}[section]
\newtheorem{remark1}[observe]{Remark}
\newtheorem{example1}{Example}[section]
\newtheorem{aside1}[observe]{Aside}
\newenvironment{definition}[1][]{\begin{definition1}[#1] \rm}{\end{definition1}}
\newenvironment{observation}{\begin{observe} \rm}{\end{observe}}
\newenvironment{remark}{\begin{remark1} \rm}{\end{remark1}}
\def\qed{\hfill$\blacksquare$\\} \renewenvironment{proof}{\noindent {\bf 
Proof.}}{\qed}
\newif\ifshowboxes \showboxestrue
\providecommand{\e}[1]{\ensuremath{\times 10^{#1}}}
\newcommand{\overbar}[1]{\mkern 1.5mu\overline{\mkern-1.5mu#1\mkern-1.5mu}\mkern%
  1.5mu}
\newcommand{\mach}{u}
\newcommand{\li}{L^\infty}
\renewcommand{\d}{\,\mathrm{d}}
\newcommand{\norm}[1]{\ensuremath{ {\lVert #1 \rVert} }}
\newcommand{\bnorm}[1]{\ensuremath{ {\bigl\lVert #1 \bigr\rVert} }}
\newcommand{\snorm}[1]{\ensuremath{ {\left\| #1 \right\|} }}
\newcommand{\abs}[1]{\ensuremath{ {\lvert #1 \rvert} }}
\def\C{\mathbbm{C}}
\def\R{\mathbbm{R}}
\def\1{\mathbbm{1}}
\def\O{\mathcal{O}}
\renewcommand{\hat}{\widehat}
\newcommand{\td}[1][]{\textcolor{red}{\fcolorbox{red}{pink}{\textbf{\textcolor{red}{\scalebox{0.7}[1.0]{\small TODO}}}}\ifthenelse{\equal{#1}{}}{}{~\emph{#1}}}\xspace}
\def\P{\mathcal{P}}
\begin{document}

\begin{center}
    \begin{minipage}[t]{5.7in}

    In this paper, we show that the monomial basis is generally as good
    as a well-conditioned polynomial basis for interpolation, provided that
    the condition number of the Vandermonde matrix is smaller than the
    reciprocal of machine epsilon.  This leads to a practical
    algorithm for piecewise polynomial interpolation over general regions in
    the complex plane using the monomial basis.  Our analysis also yields a
    new upper bound for the condition number of an arbitrary Vandermonde
    matrix, which generalizes several previous results.

 \vspace{ 0.15in}
 \noindent \textbf{Keywords}:  
 polynomial interpolation; monomials; Vandermonde matrix; backward error
 analysis

 \thispagestyle{empty}

   \end{minipage}
 \end{center}
 
 \vspace{ 1.10in}
 \vspace{ 0.30in}
 
 \begin{center}
   \begin{minipage}[t]{4.4in}
     \begin{center}

\textbf{On polynomial interpolation in the monomial basis}
\\
   \vspace{ 0.30in}
 
 Zewen Shen$\mbox{}^{\dagger\, \diamond\, \star}$ and
 Kirill Serkh$\mbox{}^{\ddagger\, \diamond}$  \\
 v6, Dec 13, 2024
 
     \end{center}
   \vspace{ -50.0in}
   \end{minipage}
 \end{center}
 
 \vspace{ 1.05in}

 \vfill
 
 \noindent 
 $\mbox{}^{\diamond}$  This author's work was supported in part by the NSERC
 Discovery Grants RGPIN-2020-06022 and DGECR-2020-00356.
 \\

 \vspace{2mm}
 
 \noindent
 $\mbox{}^{\dagger}$ Dept.~of Computer Science, University of Toronto,
 Toronto, ON M5S 2E4\\
 \noindent
 $\mbox{}^{\ddagger}$ Dept.~of Math. and Computer Science, University of Toronto,
 Toronto, ON M5S 2E4 \\
 
 \vspace{2mm}
 \noindent 
 $\mbox{}^{\star}$  Corresponding author
 \\

 \vfill
 \eject
\tableofcontents

\section{Introduction} 
\label{sec:intro}

Function approximation has been a central topic in numerical analysis since
its inception.  One of the most effective methods for approximating a
function $F:[-1,1]\to\R$ is the use of an interpolating polynomial $P_N$ of
degree $N$ which satisfies $P_N(x_j)=F(x_j)$ for a set of $(N+1)$
interpolation points $\{x_j\}_{j=0,1,\dots,N}$.  In practice, the
interpolation points are typically chosen to be the Chebyshev points, and
the resulting interpolating polynomial, known as the Chebyshev interpolant,
is a nearly optimal approximation to $F$ in the space of polynomials of
degree at most $N$ \cite{nick}.  A common basis for representing the
interpolating polynomial $P_N$ is the Lagrange polynomial basis, and the
evaluation of $P_N$ in this basis can be done stably using the Barycentric
interpolation formula \cite{bary,higham3}.  Some other commonly used bases
are Newton polynomials, Chebyshev polynomials, and Legendre polynomials.
Alternatively, the monomial basis can be used to represent $P_N$, such that
$P_N(x)=\sum_{k=0}^N a_k x^k$ for some coefficients
$\{a_k\}_{k=0,1,\dots,N}$.  The computation of the monomial coefficient
vector $a^{(N)}:=(a_0,a_1,\dots,a_N)^T\in\R^{N+1}$ of the interpolating
polynomial~$P_N$ requires the solution of the linear system
$V^{(N)}a^{(N)}=f^{(N)}$, where 
  \begin{align}
V^{(N)}:=\begin{pmatrix}
1 & x_0 & x_0^2 & \cdots & x_0^N \\ 
1 & x_1 & x_1^2 & \cdots & x_1^N \\
\vdots & \vdots & \vdots & \ddots &\vdots \\
1 & x_N & x_N^2 & \cdots & x_N^N \\ 
\end{pmatrix}\in\R^{(N+1)\times(N+1)}
  \end{align}
is a Vandermonde matrix, and
$f^{(N)}:=\bigl(F(x_0),F(x_1),\dots,F(x_N)\bigr)^T\in\R^{N+1}$ is a vector
of the function values of $F$ at the $(N+1)$ interpolation points on the
interval $[-1,1]$.  It is well-known that, given any set of real
interpolation points within the unit interval, the condition number of
$V^{(N)}$ grows at least as fast as $\sqrt{2}(1+\sqrt{2})^{N-1}/\sqrt{N+1}$
\cite{beck}.  It follows that the accuracy of the numerical solution to this
linear system deteriorates rapidly as $N$ increases, and, as a result, this
algorithm for constructing $P_N$ is often considered unstable. But, is this
really the case?  Let $\{x_j\}_{j=0,1,\dots,N}$ be the set of $(N+1)$
Chebyshev points on the interval $[-1,1]$, and suppose that
$F(x)=\cos(2x+1)$. We solve the resulting Vandermonde system using LU
factorization with partial pivoting.  In Figure \ref{fig:111a}, we show the
polynomial interpolation error $\norm{F-P_N}_{L^\infty([-1,1])}$ and the
monomial approximation error $\norm{F-\hat P_N}_{L^\infty([-1,1])}$, where
$P_N$ is approximated using the Barycentric interpolation formula and 
$\hat P_N$ is evaluated using the computed monomial expansion.  The
Chebyshev interpolant constructed from the computed monomial expansion is,
surprisingly, as accurate as the Chebyshev interpolant evaluated using the
Barycentric interpolation formula (which is accurate up to machine
precision), despite the huge condition number of the Vandermonde matrix
reported in Figure \ref{fig:111b}. 

\begin{figure}[h]
    \centering
    \begin{subfigure}{0.49\textwidth}
      \centering
      \includegraphics[width=\textwidth]{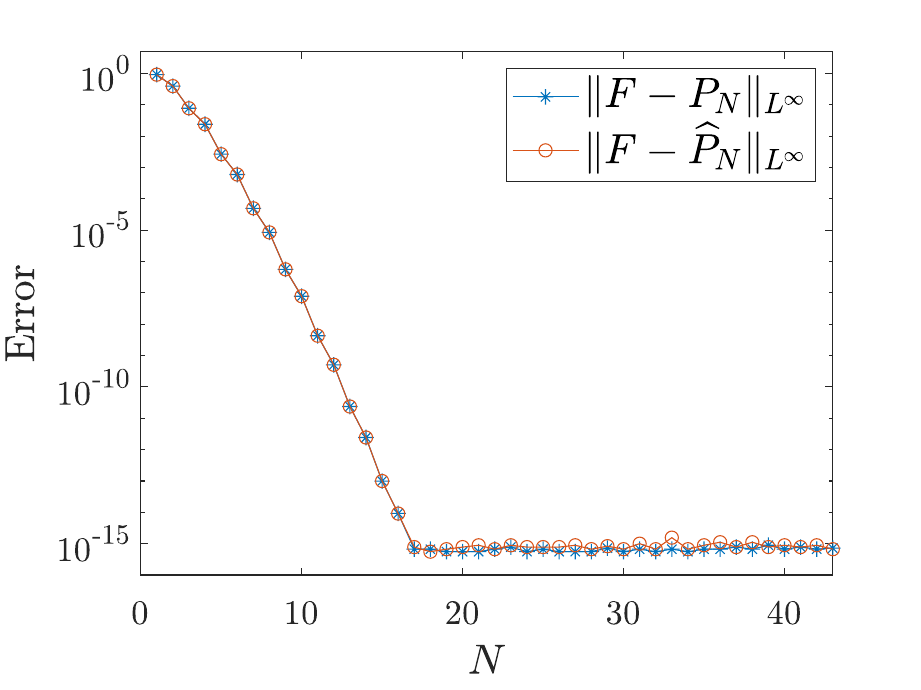}
      \caption{$F(x)=\cos(2x+1)$}
  \label{fig:111a}
    \end{subfigure}
\begin{subfigure}{0.49\textwidth}
      \centering
      \includegraphics[width=\textwidth]{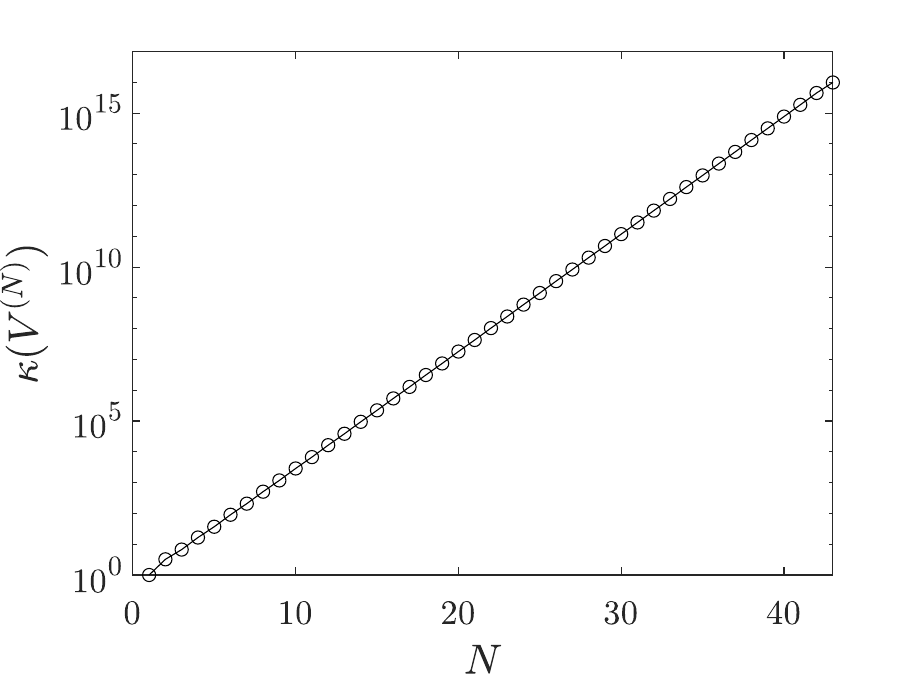}
      \caption{Condition number of $V^{(N)}$}
  \label{fig:111b}
    \end{subfigure}
      \caption{{\bf Polynomial interpolation of $\cos(2x+1)$ in the monomial
      basis over $[-1,1]$}. The $x$-axis label $N$ denotes the order of
      approximation. The polynomial $P_N$ denotes the interpolating polynomial
      approximated using the Barycentric interpolation formula. The polynomial
      $\hat P_N$ denotes the computed monomial expansion. The $L^\infty$
      error is estimated by comparing the approximated function values at
      $10000$ equidistant points over $[-1,1]$ with the true function
      values.}
      \label{fig:111}
\end{figure}

What happens when the function $F$ requires a higher-order polynomial for
accurate approximation?  In Figure \ref{fig:222}, we compare the accuracy of
the two approximations when $F(x)=\cos(8x+1)$ and when $F(x)=\cos(12x+1)$.
Initially, the computed monomial expansion is as accurate as the Chebyshev
interpolant evaluated using the Barycentric interpolation formula.  However,
the convergence of polynomial interpolation in the monomial basis stagnates
after reaching a certain error threshold.  It appears that, the higher the
polynomial order necessary to approximate the functions, the larger that
error threshold becomes. But consider the accuracy of the two approximations
when $F(x)=\frac{1}{x-\sqrt{2}}$ and when $F(x)=\frac{1}{x-0.5i}$, shown in
Figure \ref{fig:333}. These two functions each have a singularity in a
neighborhood of the interval $[-1,1]$, and Chebyshev interpolants of
degree~$\geq 40$ are required to approximate them to machine precision. Yet,
no stagnation of convergence is observed. In Figure \ref{fig:444}, we
consider the case where $F$ is a non-smooth function, and we find that the
accuracy of the two approximations is, once again, the same.   The stability
of polynomial interpolation in the monomial basis clearly depends in a
subtle way on the function being approximated.

\begin{figure}[h]
    \centering
  \begin{subfigure}{0.49\textwidth}
      \centering
      \includegraphics[width=\textwidth]{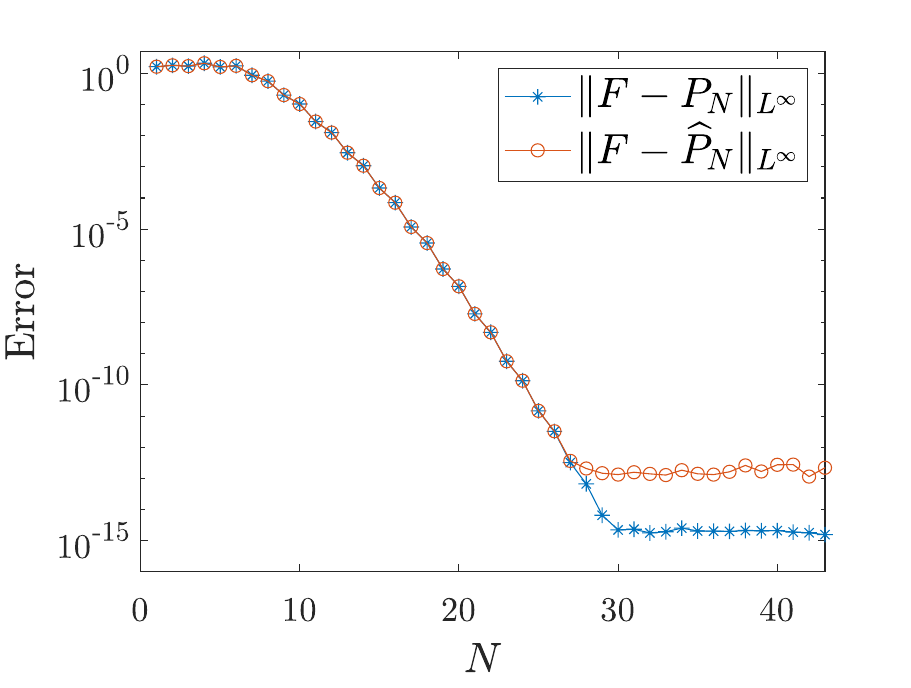}
      \caption{$F(x)=\cos(8x+1)$}
      \label{fig:222a}
    \end{subfigure}
  \begin{subfigure}{0.49\textwidth}
      \centering
      \includegraphics[width=\textwidth]{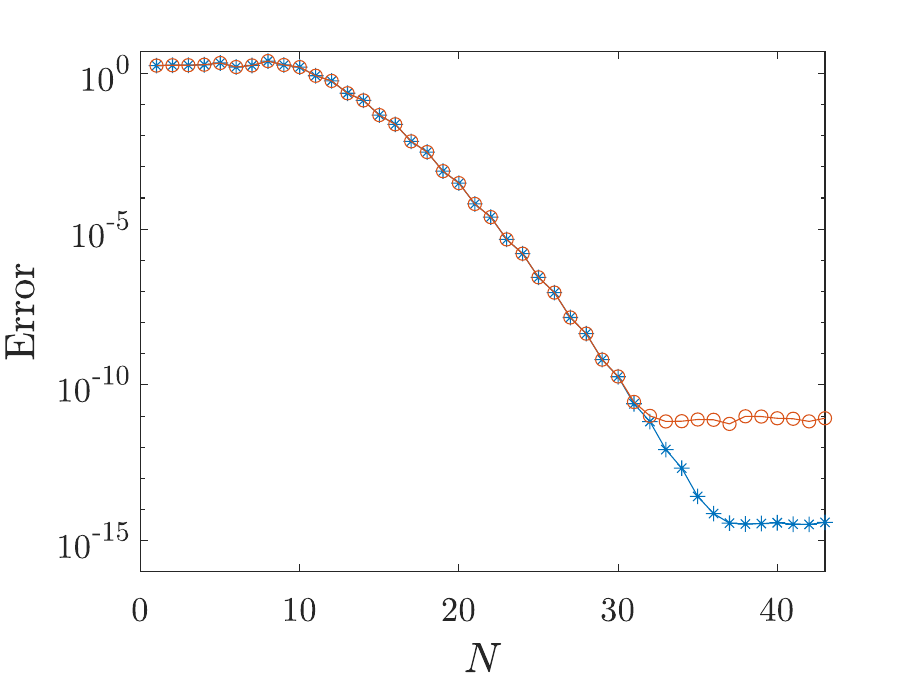}
      \caption{$F(x)=\cos(12x+1)$}
      \label{fig:222b}
    \end{subfigure}
      \caption{{\bf Polynomial interpolation of more complicated functions
      in the monomial basis, over $[-1,1]$}. }
      \label{fig:222}
\end{figure}

\begin{figure}[h]
    \centering
  \begin{subfigure}{0.49\textwidth}
      \centering
      \includegraphics[width=\textwidth]{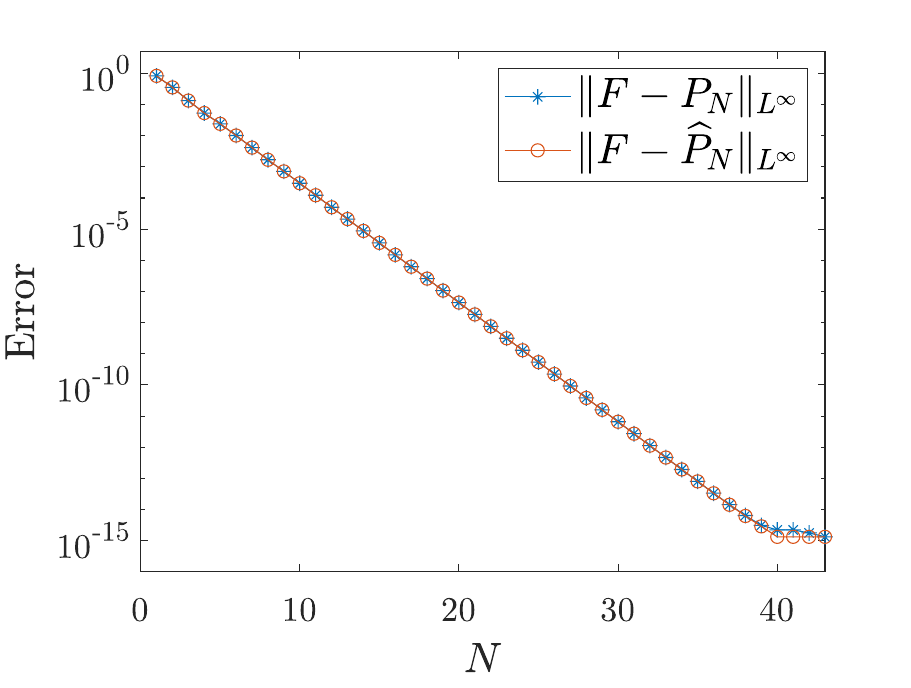}
      \caption{$F(x)=\frac{1}{x-\sqrt{2}}$}
\label{fig:333a}
    \end{subfigure}
  \begin{subfigure}{0.49\textwidth}
      \centering
      \includegraphics[width=\textwidth]{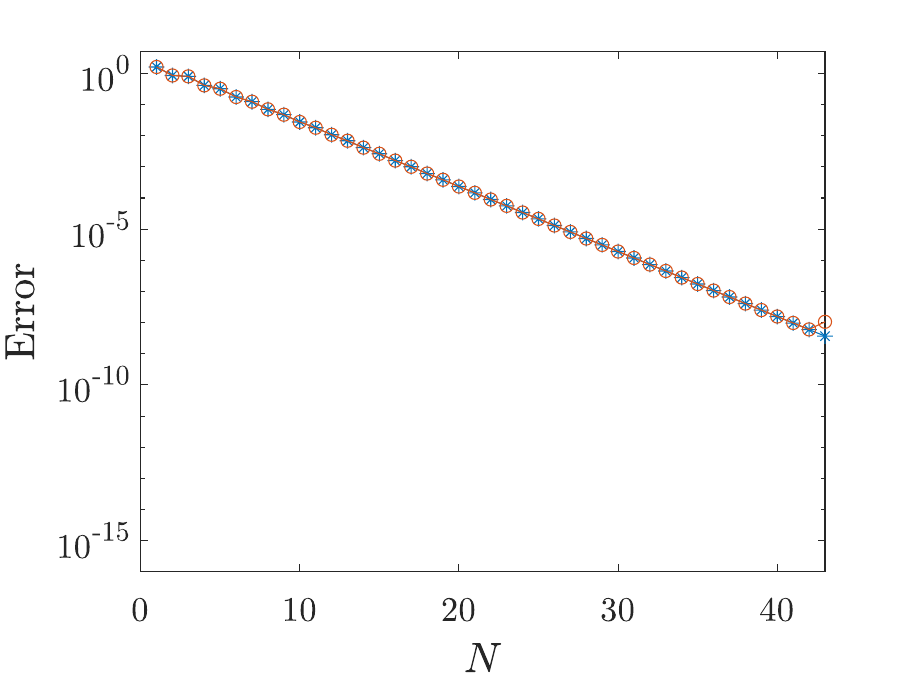}
      \caption{$F(x)=\frac{1}{x-0.5i}$}
\label{fig:333b}
    \end{subfigure}
      \caption{{\bf Polynomial interpolation of functions with a singularity
      near the approximation domain, in the monomial basis, over $[-1,1]$}.}
\label{fig:333}
\end{figure}

\begin{figure}[h]
    \centering
  \begin{subfigure}{0.49\textwidth}
      \centering
      \includegraphics[width=\textwidth]{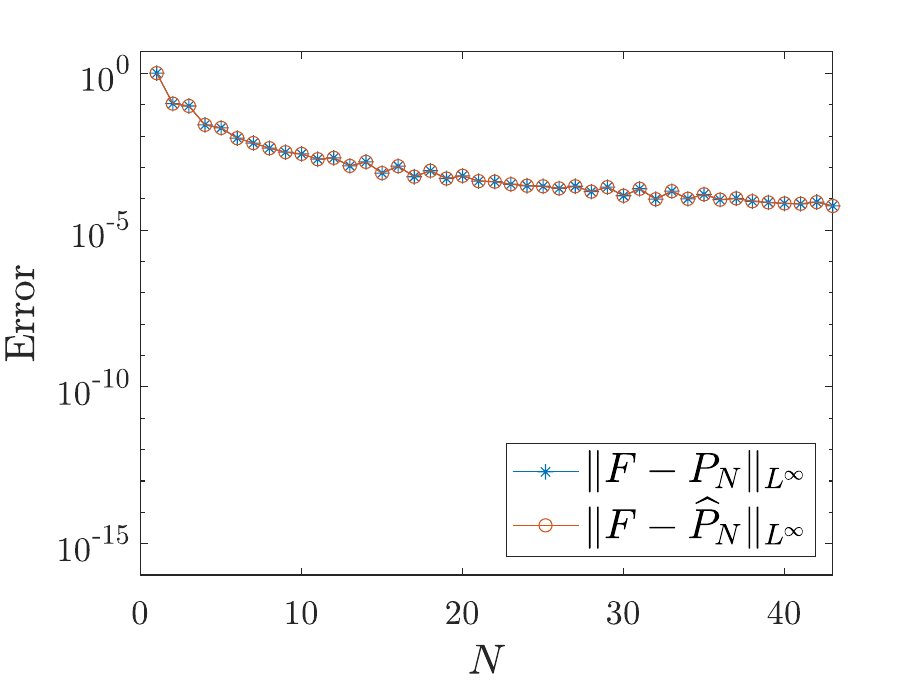}
      \caption{$F(x)=\abs{x+0.1}^{2.5}$}
\label{fig:444a}
    \end{subfigure}
  \begin{subfigure}{0.49\textwidth}
      \centering
      \includegraphics[width=\textwidth]{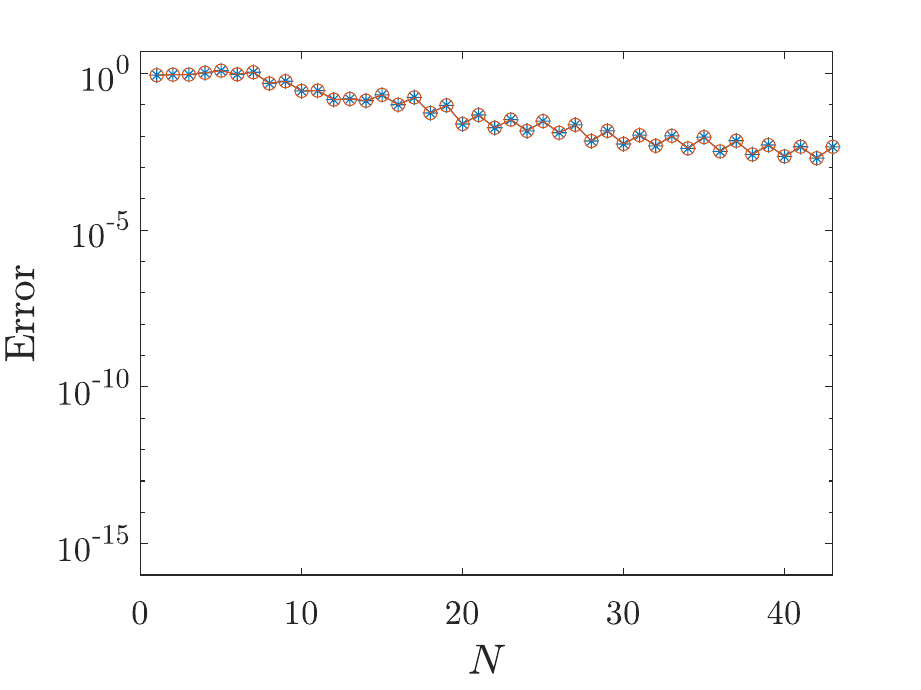}
      \caption{$F(x)=\abs{\sin(5x)}^3$}
\label{fig:444b}
    \end{subfigure}
      \caption{{\bf Polynomial interpolation of non-smooth functions in the
      monomial basis, over $[-1,1]$}. }
\label{fig:444}
\end{figure}

These seemingly mysterious experiments can be explained partially from the
point of view of backward error analysis. The forward error
$\norm{a^{(N)}-\hat a^{(N)}}_2$ of the numerical solution $\hat a^{(N)}$ to
the Vandermonde system $V^{(N)}a^{(N)}=f^{(N)}$ can indeed be huge, but it
is the backward error, i.e., $\norm{V^{(N)}\hat a^{(N)}-f^{(N)}}_2$, that
matters for the accuracy of the approximation.  This is because a backward
error of size around machine precision implies that the difference between the
computed monomial expansion, which we denote by $\hat P_N$, and the exact
interpolating polynomial, $P_N$, is a polynomial which approximately vanishes
at all of the interpolation points. More specifically, the residual error
$\norm{P_N-\hat P_N}_{\li([-1,1])}$ is bounded by
$\Lambda_N\cdot\norm{V^{(N)}\hat a^{(N)}-f^{(N)}}_2$, where $\Lambda_N$
denotes the Lebesgue constant associated with the interpolation points.
This constant is defined by the formula $\Lambda_N=\sup_{F\in
C([-1,1]), F\neq 0}\frac{\norm{P_N}_\infty}{\norm{F}_\infty}$, with
$\norm{\cdot}_\infty$ denoting the $\infty$-norm in $C([-1,1])$, and $P_N$
denoting the interpolating polynomial of $F$ for the given 
interpolation points.  This constant grows logarithmically when well-conditioned
interpolation points, such as Chebyshev points, are used.  As a result, we
can bound the monomial approximation error $\norm{F-\hat P_N}_{\li([-1,1])}$
using the following inequality:
  \begin{align}
  \hspace*{-2.5em}
\norm{F-\hat P_N}_{\li([-1,1])}\leq
&\,\norm{F-P_N}_{\li([-1,1])}+\norm{P_N-\hat P_N}_{\li([-1,1])}\notag \\
\leq &\, \norm{F-P_N}_{\li([-1,1])}+\Lambda_N\cdot \norm{V^{(N)}\hat
a^{(N)}-f^{(N)}}_2\notag \\
\lesssim &\, \norm{F-P_N}_{\li([-1,1])}+ \norm{V^{(N)}\hat
a^{(N)}-f^{(N)}}_2.
  \end{align}
Here, we introduce the notation $x_N \lesssim y_N$ to indicate the existence
of a constant $c_N > 0$, independent of the specific values of $x_N$ and
$y_N$ but dependent on $N$, such that $x_N \leq c_N y_N$. This constant
$c_N$ is assumed to be a rational function of $N$, where $N$ represents
either the dimensionality or the approximation order.  Additionally, we
write $x_N \approx y_N$ to mean that both $x_N \lesssim y_N$ and $y_N
\lesssim x_N$.  

When the
backward error is smaller than the polynomial interpolation error, the
monomial approximation error is dominated by the polynomial interpolation
error, and the use of a monomial basis does not incur any additional loss of
accuracy.  Once the polynomial interpolation error becomes smaller than the
backward error, the convergence of the approximation stagnates.  For
example, for the function approximated in Figure \ref{fig:333a}, we find
that the backward error is around the size of machine epsilon for all $N\leq
43$, which explains why no stagnation is observed. On the other hand, in
Figure \ref{fig:222a}, the backward error is around the size of $10^{-13}$
for $N\geq 20$, which leads to stagnation once the polynomial interpolation
error is less than $10^{-13}$.

The explanation above brings up a new question: when will the backward error
be small? Given a matrix $A\in\C^{n \times n}$ and a vector $b\in\C^n$, the
numerical solution $\hat x$ to the linear system $Ax=b$ computed by a
backward stable solver is the exact solution to the linear system
  \begin{align}
(A+\delta A)\hat x = b 
  \end{align}
for a perturbation matrix $\delta A\in\C^{n\times n}$ that satisfies
$\norm{\delta A}_2\leq  u\cdot \zeta_n \norm{A}_2$, where $u$ denotes
machine epsilon and $\zeta_n>0$ is a small growth factor that is independent of
$A$ and $b$ (see Section 7.6 in \cite{higham_text}).  A classical example of
a backward stable linear system solver is the Householder QR factorization,
whose growth factor $\zeta_n$ is bounded by a low-degree polynomial in
$n$ (see Theorem 19.5 and Equation (19.14) in \cite{higham_text}). Another
commonly used solver, LU factorization with partial pivoting, has a growth
factor $\zeta_n$ that can grow exponentially as $n$ increases (see Section
9.3 in \cite{higham_text}).  This upper bound for
$\norm{\delta A}_2$ is, however, rarely approached, and the
stability of LU factorization with partial pivoting is often comparable to
that of QR factorization in practice (see Lecture 22 in \cite{nick_linalg}
for a detailed discussion). When we state in this paper that a
linear system is solved using a backward stable solver, we assume that the
growth factor $\zeta_n$ associated with this solver is bounded by a
low-degree polynomial in $n$ with small coefficients.

When the Vandermonde system $V^{(N)}a^{(N)}=f^{(N)}$ is solved using a
backward stable solver, the backward error of the numerical solution to the
Vandermonde system satisfies $\norm{V^{(N)}\hat
a^{(N)}-f^{(N)}}_2=\norm{\delta V^{(N)} \hat a^{(N)}}_2\leq u\cdot \zeta_n
\norm{V^{(N)}}_2 \norm{\hat a^{(N)}}_2$.  Since $\norm{V^{(N)}}_2\in
[\sqrt{N+1}, N+1]$ when the interpolation points are inside the unit
interval $[-1,1]$, the backward error is essentially dominated by $u\cdot
\norm{\hat a^{(N)}}_2$. Furthermore, one can show that $\norm{\hat
a^{(N)}}_2\approx \norm{a^{(N)}}_2$ when 
the condition number of $V^{(N)}$ satisfies $\kappa(V^{(N)})\lesssim
\frac{1}{u}$ (see Theorem \ref{thm:mono_err} for a formal statement).
It follows that the monomial approximation error can be
quantified a priori using information about the interpolating polynomial,
which means that a theory of polynomial interpolation in the monomial basis
can be developed.  Although the examples provided in this section are
limited to the interval $[-1,1]$, it is important to note that these
observations are equally applicable to a simply connected compact
domain in the complex plane. In the rest of the paper, we consider
polynomial interpolation in the monomial basis over a simply connected
compact domain in the complex plane, with the interval as a special case.

The rest of the paper is organized as follows.  In Section \ref{sec:mono1d},
we analyze polynomial interpolation in the
monomial basis over a simply connected compact domain in the complex plane.
Our analysis reveals the conditions under which the monomial basis is as
good as a well-conditioned polynomial basis for interpolation, resulting in
a practical algorithm for using the monomial basis, with no extra error and
with almost no extra cost. As a by-product of our analysis, we derive a
tight upper bound for the condition number of any Vandermonde matrix (see
Theorem \ref{thm:cond2}). In Section~\ref{sec:exp}, we provide extensive
numerical experiments to support our analysis.  In Section~\ref{sec:app}, we
present applications where the use of a monomial basis for interpolation
offers a substantial advantage over other bases.   In Section \ref{sec:dis},
we summarize our key points, review related work, and discuss generalizations
of our results.

\section{Polynomial interpolation in the monomial basis}
  \label{sec:mono1d}

In this section, we consider polynomial interpolation of a function over a
simply connected compact set in the complex plane. We structure the section as follows.
First, we show in Theorem \ref{thm:mono_err} that, when
$\kappa(V^{(N)})\lesssim \frac{1}{u}$, the monomial approximation error is
bounded by the sum of the polynomial interpolation error and a extra error
term that involves the 2-norm of the monomial coefficient vector of the
interpolant.  Then, we provide tight upper bounds for this extra error term
and for the growth of the condition number of the Vandermonde matrix in
Theorems \ref{thm:coeffnorm} and \ref{thm:cond2}, respectively.  
In Section \ref{sec:good}, we elucidate the frequently observed and
seemingly paradoxical success of polynomial interpolation in the monomial
basis when $\kappa(V^{(N)})\lesssim \frac{1}{u}$. Finally, we summarize
the necessary considerations for the proper usage of the monomial basis in
Section \ref{sec:limit}.

Let $\Omega\subset \C$ be simply connected and compact, and let
$F:\Omega\to\C$ be an arbitrary function.  
The $N$th degree interpolating polynomial, denoted by $P_N$, 
of the function $F$ for a given set of $(N+1)$ distinct
interpolation points $Z:=\{z_j\}_{j=0,1,\dots,N}\subset \Omega$
can be expressed as $P_N(z)=\sum_{k=0}^N a_k z^k$, where
the monomial coefficient vector $(a_0,a_1,\dots,a_N)^T$ is the solution to
the Vandermonde system 
  \begin{align}
\begin{pmatrix}
1 & z_0 & z_0^2 & \cdots & z_0^N \\ 
1 & z_1 & z_1^2 & \cdots & z_1^N \\
\vdots & \vdots & \vdots & \ddots &\vdots \\
1 & z_N & z_N^2 & \cdots & z_N^N \\ 
\end{pmatrix}
\begin{pmatrix}
a_0 \\ 
a_1  \\
\vdots  \\
a_N \\
\end{pmatrix}=\begin{pmatrix}
F(z_0)\\ 
F(z_1)  \\
\vdots  \\
F(z_N) \\
\end{pmatrix}.
\label{for:vand}
  \end{align}
In this paper, we choose to fix the set of basis functions as
$\{z^k\}_{k=0,1,\dots,N}$.  
However, depending on the geometry and location of $\Omega$, it can be
advantageous to use a scaled and translated monomial basis 
$\{(\frac{z-\mu}{\nu})^k\}_{k=0,1,\dots,N}$.
Our analysis also encompasses this case, as the Vandermonde system
corresponding to the scaled and translated basis with the points
$\{z_j\}_{j=0,1,\dots,N} \subset \Omega$ is equivalent to that of the
original basis $\{z^k\}_{k=0,1,\dots,N}$ using the transformed points
$\{\frac{z_j-\mu}{\nu}\}_{j=0,1,\dots,N} \subset \frac{\Omega
- \mu}{\nu}$. We assume without loss of generality that
$\Omega$ lies within the unit disk, ensuring that $\norm{V^{(N)}}_2 \in
[\sqrt{N+1}, N+1]$.

In the following, we denote the Vandermonde matrix, the monomial coefficient
vector, and the right-hand side vector  by $V^{(N)}$, $a^{(N)}$,
and $f^{(N)}$, respectively.

In order to study the size of the residual of the numerical solution to the
Vandermonde system, we require the following lemma, which provides a bound
for the 2-norm of the solution to a perturbed linear system. 
\begin{lemma}
  \label{lem:small_residual}
Let $N$ be a positive integer. Suppose that $A\in \C^{N\times N}$ is
invertible, $b\in \C^N$, and that $x\in\C^N$ satisfies $Ax=b$. Suppose
further that $\hat x\in\C^N$ satisfies $(A+\delta A)\hat x=b$ for some
$\delta A\in \C^{N\times N}$.  If there exists an $\alpha >1$ such that
  \begin{align}
 \norm{\delta A}_2\leq \frac{1}{\alpha\cdot\norm{A^{-1}}_2},
\label{for:sr1}
  \end{align}
then the matrix $A+\delta A$ is invertible, and $\hat x$ satisfies
  \begin{align}
\frac{\alpha}{\alpha+1}\norm{x}_2\leq 
\norm{\hat x}_2 \leq \frac{\alpha}{\alpha-1}\norm{x}_2.
  \end{align}
\end{lemma}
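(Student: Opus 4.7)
The plan is to reduce everything to a Neumann series argument on the perturbation $I + A^{-1}\delta A$. Since $A$ is invertible, I would factor
\begin{align}
A+\delta A = A\bigl(I + A^{-1}\delta A\bigr),
\end{align}
and observe that the hypothesis \eqref{for:sr1} implies $\norm{A^{-1}\delta A}_2 \leq \norm{A^{-1}}_2\norm{\delta A}_2 \leq 1/\alpha < 1$. The standard Neumann series then gives that $I + A^{-1}\delta A$ is invertible with $\norm{(I+A^{-1}\delta A)^{-1}}_2 \leq 1/(1-1/\alpha) = \alpha/(\alpha-1)$, which in turn shows that $A+\delta A$ is invertible.

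Next I would derive the clean identity that drives both bounds. Starting from $b = Ax = (A+\delta A)\hat x$ and left-multiplying by $A^{-1}$ gives $x = (I + A^{-1}\delta A)\hat x$, equivalently $\hat x = (I + A^{-1}\delta A)^{-1} x$. The upper bound on $\norm{\hat x}_2$ follows immediately by applying $\norm{\cdot}_2$ to the latter expression and using the Neumann series bound. For the lower bound, I would apply $\norm{\cdot}_2$ to the former expression, using the triangle inequality to get $\norm{x}_2 \leq (1 + 1/\alpha)\norm{\hat x}_2 = \frac{\alpha+1}{\alpha}\norm{\hat x}_2$, and then rearrange.

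There is no real obstacle here; the whole argument is essentially the quantitative version of the statement ``a small relative perturbation of $A$ produces a small relative perturbation of the solution.'' The only thing to be careful about is packaging the Neumann-series bound as a two-sided estimate rather than the usual one-sided one, and checking that the constants $\alpha/(\alpha-1)$ and $\alpha/(\alpha+1)$ are exactly what the symmetric splitting produces. Since $\alpha > 1$ by hypothesis, both constants are finite and positive, and the bounds are nontrivial.
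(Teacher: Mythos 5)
Your proposal is correct and follows essentially the same route as the paper's proof: left-multiply by $A^{-1}$ to obtain $(I+A^{-1}\delta A)\hat x = x$, invoke the Neumann series bound $\norm{(I+A^{-1}\delta A)^{-1}}_2 \leq \alpha/(\alpha-1)$ for the upper bound and invertibility, and use $\norm{I+A^{-1}\delta A}_2 \leq 1+1/\alpha$ for the lower bound. There is nothing to add.
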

\begin{proof}
By multiplying both sides of $(A+\delta A)\hat x=b$ by $A^{-1}$, we have
that
  \begin{align}
(I+A^{-1}\delta A)\hat x = x,
  \end{align}
where $I$ denotes the identity matrix. By (\ref{for:sr1}), the term
$A^{-1}\delta A$ satisfies
  \begin{align}
\norm{A^{-1}\delta A}_2\leq \norm{A^{-1}}_2\norm{\delta
A}_2\leq \frac{1}{\alpha}<1.
  \label{for:sr4}
  \end{align}
Thus, it follows that the matrix $A+\delta
A$ is invertible, and $\norm{\hat x}_2$ satisfies
  \begin{align}
\hspace*{-1.5em}
\norm{\hat x}_2 \leq \norm{(I+A^{-1}\delta A)^{-1}}_2 \norm{x}_2\leq
\frac{1}{1-\norm{A^{-1}\delta A}_2}\norm{x}_2\leq \frac{\alpha}{\alpha-1}\norm{x}_2.
  \label{for:sr5}
  \end{align}
In addition, by (\ref{for:sr4}), $\norm{x}_2$ satisfies
  \begin{align}
\norm{x}_2\leq \norm{I+A^{-1}\delta A}_2\norm{\hat x}_2 \leq
\Bigl(1+\frac{1}{\alpha}\Bigr)\norm{\hat x}_2.
  \label{for:sr6}
  \end{align}
The proof is completed by combining (\ref{for:sr5}) and (\ref{for:sr6}).
\end{proof}

The following theorem provides upper bounds for the monomial
approximation error. It is related to frame approximation \cite{frame,
frame2} and  the method of fundamental solutions
\cite{barnett2,stein} in the sense that it also uses a similar idea, that it
is possible to achieve a small approximation error with an ill-conditioned
basis, provided that there exists a small coefficient vector with a good
fit.
\begin{theorem}
  \label{thm:mono_err}
Suppose that there exists some constant $\gamma_N\geq 0$ such that
the computed monomial coefficient vector $\hat a^{(N)}=(\hat a_0,\hat
a_1,\dots,\hat a_N)^T$ satisfies
  \begin{align}
\bigl(V^{(N)}+\delta V^{(N)}\bigr)\hat a^{(N)} = f^{(N)} \label{for:mm0}
  \end{align}
for some $\delta V^{(N)}\in \C^{(N+1)\times (N+1)}$ with
  \begin{align}
\norm{\delta V^{(N)}}_2\leq \mach\cdot
\gamma_N,\label{for:mm1}
  \end{align}
where $u$ denotes machine epsilon.
Let $\hat P_N(z):=\sum_{k=0}^N \hat a_k z^k$ be the computed monomial
expansion. The monomial approximation error is bounded by
  \begin{align}
\norm{F-\hat P_N}_{L^\infty(\Omega)} \leq&\, \norm{F-P_N}_{L^\infty(\Omega)}
+ \mach\cdot \gamma_N\Lambda_N \norm{\hat a^{(N)}}_2,\label{for:mm7}
  \end{align}
where $\Lambda_N:=\sup_{F\in C(\Omega), F\neq
0}\frac{\norm{P_N}_{L^\infty(\Omega)}}{\norm{F}_{L^\infty(\Omega)}}$ is the
Lebesgue constant for $Z$.  If, in addition, 
  \begin{align}
\norm{(V^{(N)})^{-1}}_2\leq \frac{1}{2u\cdot \gamma_N},\label{for:mm8}
  \end{align}
then the 2-norm of the numerical solution $\hat a^{(N)}$ is bounded by
  \begin{align}
\frac{2}{3} \norm{a^{(N)}}_2\leq \norm{\hat a^{(N)}}_2\leq
2\norm{a^{(N)}}_2,\label{for:mm999}
  \end{align}
and the monomial approximation error can be quantified a priori by 
  \begin{align}
\hspace*{-0.0em}
\norm{F-\hat P_N}_{L^\infty(\Omega)}
\leq&\, \norm{F-P_N}_{L^\infty(\Omega)} + 2\mach\cdot \gamma_N\Lambda_N
\norm{a^{(N)}}_2.\label{for:priori1}
  \end{align}
\end{theorem}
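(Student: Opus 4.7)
The plan is to split the theorem into its three constituent claims and prove them in sequence. For \eqref{for:mm7}, I would begin with the triangle inequality $\norm{F-\hat P_N}_{L^\infty(\Gamma)}\leq \norm{F-P_N}_{L^\infty(\Gamma)}+\norm{P_N-\hat P_N}_{L^\infty(\Gamma)}$ and focus on the second term. Since $P_N-\hat P_N$ is a polynomial of degree at most $N$, it is itself the Lagrange interpolant (with respect to the nodes $Z$) of the vector of its own nodal values $r_j:=P_N(z_j)-\hat P_N(z_j)=F(z_j)-\hat P_N(z_j)$. Expanding in the Lagrange basis and using the definition of the Lebesgue constant then gives $\norm{P_N-\hat P_N}_{L^\infty(\Gamma)}\leq \Lambda_N\max_j\abs{r_j}$, reducing the problem to estimating the residual vector $r$.

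The crucial identity is obtained by rearranging \eqref{for:mm0}: since $V^{(N)}\hat a^{(N)}=f^{(N)}-\delta V^{(N)}\hat a^{(N)}$ and the $j$-th entry of $V^{(N)}\hat a^{(N)}$ is exactly $\hat P_N(z_j)$, we have the nodal residual $r=\delta V^{(N)}\hat a^{(N)}$. Bounding the infinity norm by the 2-norm and applying the backward-stability hypothesis \eqref{for:mm1} yields $\max_j\abs{r_j}\leq \norm{r}_2\leq \norm{\delta V^{(N)}}_2\norm{\hat a^{(N)}}_2\leq \mach\cdot \gamma_N\norm{\hat a^{(N)}}_2$, which combined with the preceding paragraph gives \eqref{for:mm7}.

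For \eqref{for:mm999}, I would invoke Lemma \ref{lem:small_residual} directly with $A=V^{(N)}$, $\delta A=\delta V^{(N)}$, $b=f^{(N)}$, $x=a^{(N)}$, and $\hat x=\hat a^{(N)}$. Combining \eqref{for:mm1} with the additional hypothesis \eqref{for:mm8} gives $\norm{(V^{(N)})^{-1}}_2\cdot\norm{\delta V^{(N)}}_2\leq \tfrac{1}{2}$, so \eqref{for:sr1} is satisfied with $\alpha=2$, and the lemma's conclusion is precisely \eqref{for:mm999}. The a priori bound \eqref{for:priori1} then follows immediately by substituting $\norm{\hat a^{(N)}}_2\leq 2\norm{a^{(N)}}_2$ into \eqref{for:mm7}.

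The entire argument amounts to bookkeeping once the right perturbation identity is in place, so no step presents a real obstacle; the main conceptual content is the observation $r=\delta V^{(N)}\hat a^{(N)}$, which translates the matrix-level backward error produced by a backward-stable linear solver into a pointwise residual at the nodes, which is in turn amplified by at most the factor $\Lambda_N$ when viewed as a polynomial across all of $\Gamma$.
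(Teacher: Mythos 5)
Your proposal is correct and follows essentially the same route as the paper: the triangle inequality plus the Lebesgue-constant bound on the nodal residual $V^{(N)}\hat a^{(N)}-f^{(N)}=-\delta V^{(N)}\hat a^{(N)}$ gives \eqref{for:mm7}, and Lemma \ref{lem:small_residual} with $\alpha=2$ gives \eqref{for:mm999} and hence \eqref{for:priori1}. No gaps.
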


\begin{proof}
By the triangle inequality, the definition of the Lebesgue
constant~$\Lambda_N$, equation~\eqref{for:mm0} and  inequality
\eqref{for:mm1}, the monomial approximation error satisfies
  \begin{align}
\norm{F-\hat P_N}_{L^\infty(\Omega)}\leq&\, 
\norm{F-P_N}_{L^\infty(\Omega)}
+\norm{\hat P_N-P_N}_{L^\infty(\Omega)}\notag\\
\leq &\,\norm{F-P_N}_{L^\infty(\Omega)}+\Lambda_N\norm{V^{(N)}\hat
a^{(N)}-f^{(N)}}_2 \notag\\
\leq&\, \norm{F-P_N}_{L^\infty(\Omega)}+\mach\cdot
\gamma_N\Lambda_N \norm{\hat a^{(N)}}_2.\label{for:mm11}
  \end{align}
If $\norm{(V^{(N)})^{-1}}_2\leq \frac{1}{2u\cdot \gamma_N}$, then by Lemma
\ref{lem:small_residual}, the 2-norm of the computed monomial
coefficient vector~$\hat a^{(N)}$ is bounded by
  \begin{align}
\frac{2}{3} \norm{a^{(N)}}_2\leq \norm{\hat a^{(N)}}_2\leq
2\norm{a^{(N)}}_2,
  \end{align}
and \eqref{for:mm11} becomes
  \begin{align}
\hspace*{-0.0em}
\norm{F-\hat P_N}_{L^\infty(\Omega)}
\leq&\, \norm{F-P_N}_{L^\infty(\Omega)} + 2\mach\cdot \gamma_N\Lambda_N
\norm{a^{(N)}}_2.
  \end{align}
\end{proof}

As described in the introduction, when the Vandermonde system is
solved using a backward stable linear system solver, the set of assumptions
\eqref{for:mm0} and \eqref{for:mm1} is satisfied with a constant
$\gamma_N\leq \zeta_N \norm{V^{(N)}}_2$, where $\zeta_N>0$ is a growth
factor bounded by a low-degree polynomial in $N$, and is independent of the
matrix and the right-hand-side vector. It follows then that $\gamma_N\lesssim 1$.
Therefore, given interpolation points with a Lebesgue constant
$\Lambda_N\lesssim 1$, we have that $\norm{F-\hat P_N}_{L^\infty(\Omega)}\lesssim
\norm{F-P_N}_{L^\infty(\Omega)} + \mach \norm{a^{(N)}}_2$ when
$\kappa(V^{(N)})\lesssim 1/u$, where we write the condition \eqref{for:mm8}
as $\kappa(V^{(N)}) = \norm{V^{(N)}}_2\cdot \norm{(V^{(N)})^{-1}}_2\leq
\frac{\norm{V^{(N)}}_2}{2u\cdot \gamma_N}\lesssim 1/u$.  In Figure
\ref{fig:1111}, we plot the values of $\norm{F-\hat
P_N}_{L^\infty(\Omega)}$, $\norm{F-P_N}_{L^\infty(\Omega)}$, and $\mach
\norm{a^{(N)}}_2$, for the functions appearing in Section~\ref{sec:intro},
in order to validate the theorem above.

\begin{figure}
    \centering

    \begin{subfigure}{0.49\textwidth}
      \centering
      \includegraphics[width=\textwidth]{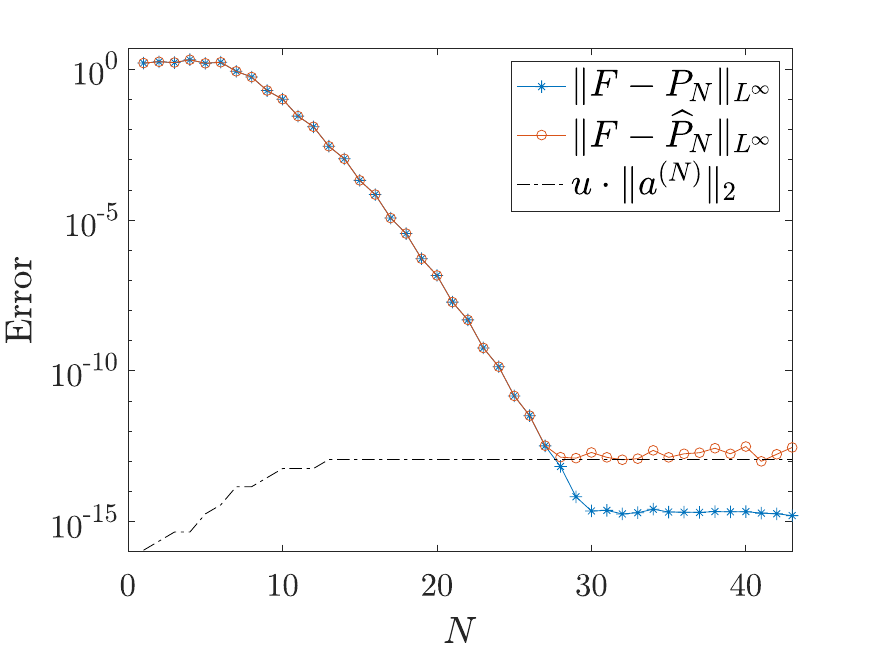}
      \caption{$F(x)=\cos(8x+1)$}
    \end{subfigure}
    \begin{subfigure}{0.49\textwidth}
      \centering
      \includegraphics[width=\textwidth]{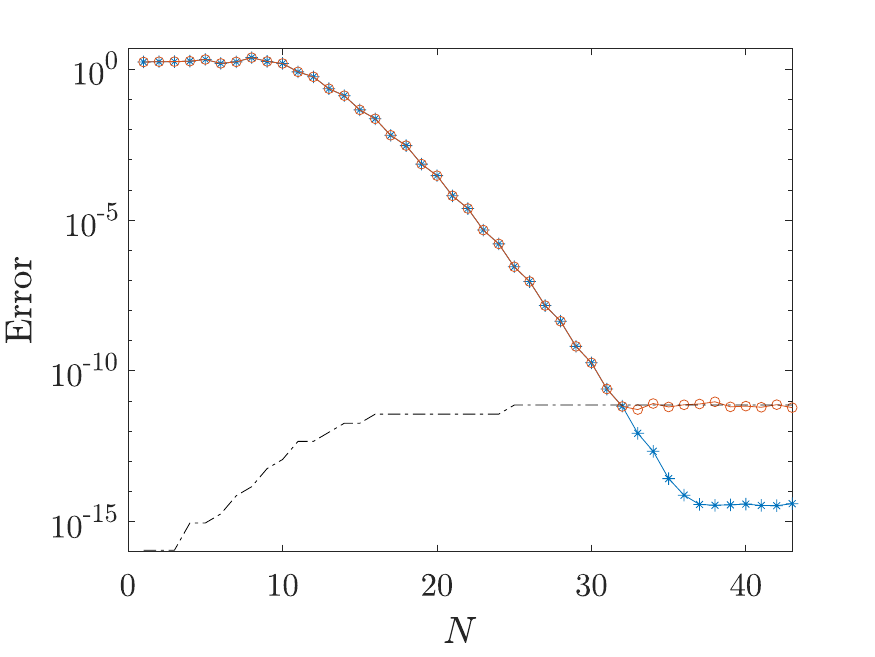}
      \caption{$F(x)=\cos(12x+1)$}
    \end{subfigure}
    \begin{subfigure}{0.49\textwidth}
      \centering
      \includegraphics[width=\textwidth]{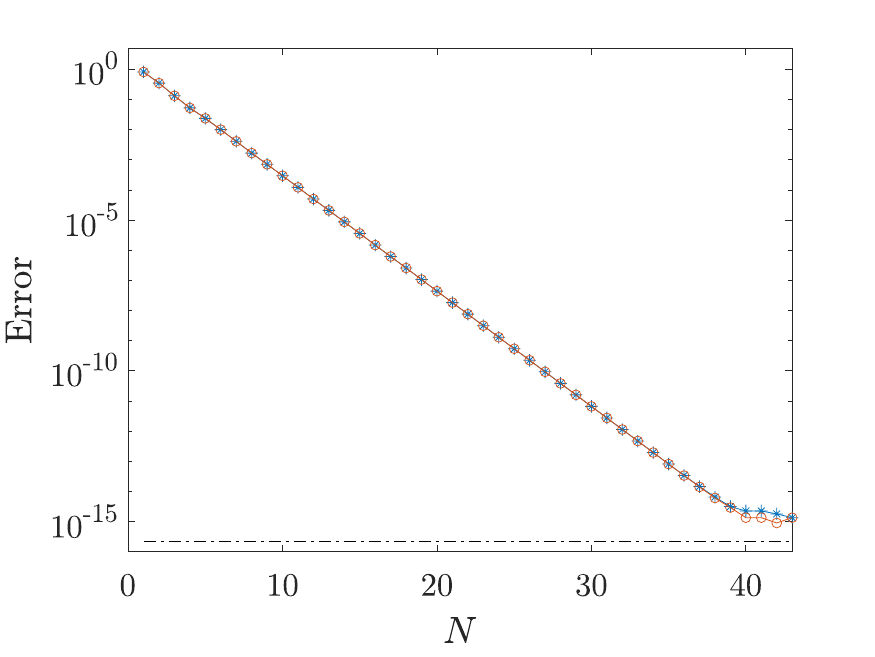}
      \caption{$F(x)=\frac{1}{x-\sqrt{2}}$}
    \end{subfigure}
    \begin{subfigure}{0.49\textwidth}
      \centering
      \includegraphics[width=\textwidth]{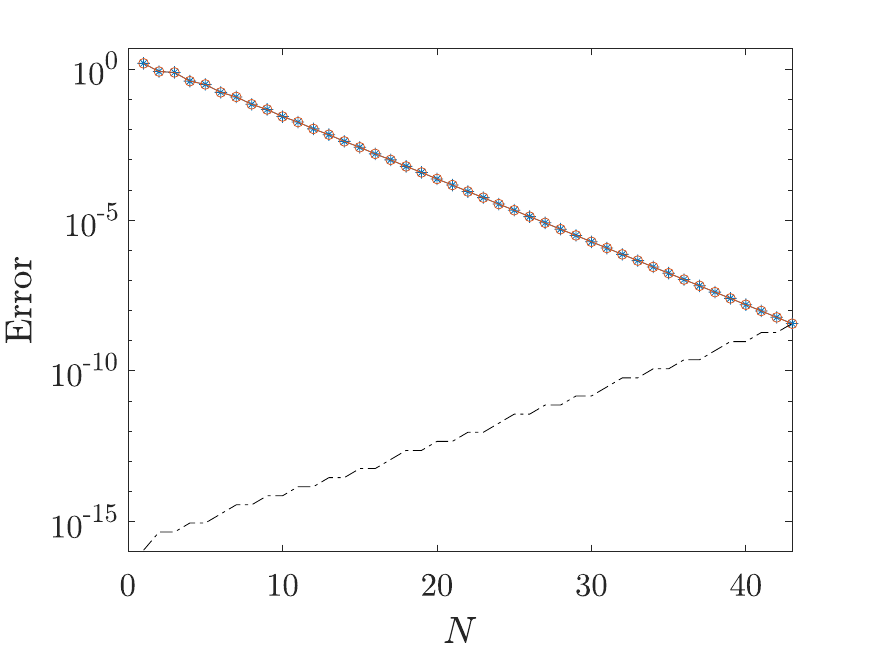}
      \caption{$F(x)=\frac{1}{x-0.5i}$}
    \end{subfigure}
    \begin{subfigure}{0.49\textwidth}
      \centering
      \includegraphics[width=\textwidth]{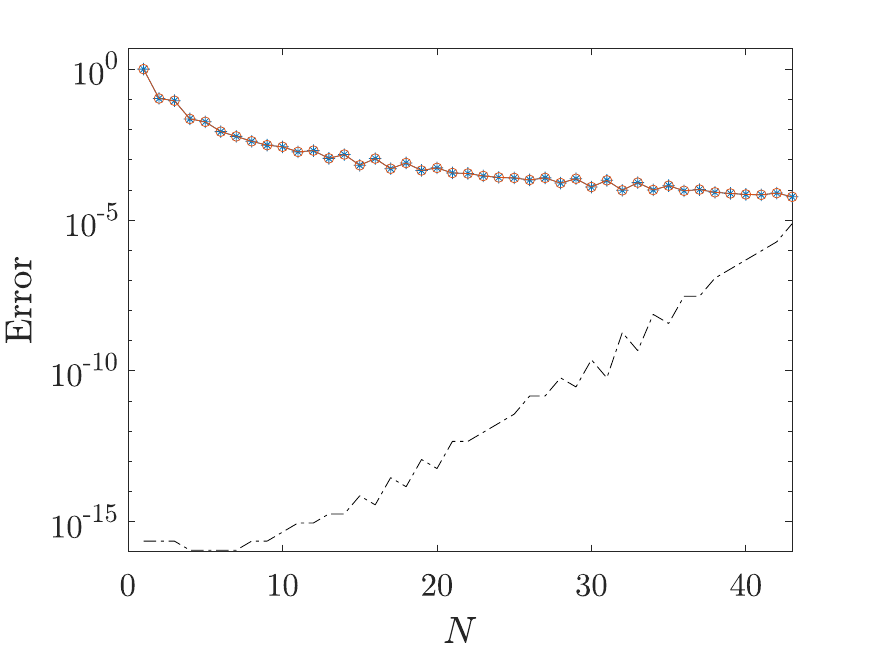}
      \caption{$F(x)=\abs{x+0.1}^{2.5}$}
    \end{subfigure}
    \begin{subfigure}{0.49\textwidth}
      \centering
      \includegraphics[width=\textwidth]{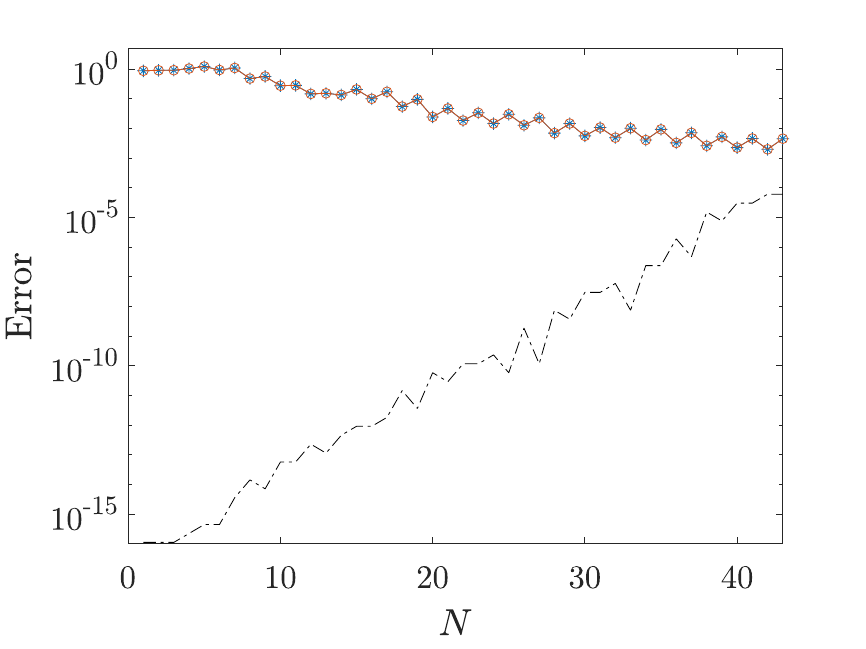}
      \caption{$F(x)=\abs{\sin(5x)}^3$}
    \end{subfigure}
      \caption{{\bf Polynomial interpolation error, monomial approximation
      error and~$u\cdot \norm{a^{(N)}}_2$, for $\Omega=[-1,1]$}. These
      functions are the ones that appear in Section~\ref{sec:intro}.     }
  \label{fig:1111}
\end{figure}

\begin{remark}
The second term on the right-hand side of \eqref{for:priori1} is
an upper bound of the backward error $\norm{P_N-\hat P_N}_{\li(\Omega)}$,
i.e., the extra loss of accuracy caused by the use of a monomial basis.
Note that the absolute condition number of the evaluation of $P_N(z)$ in
the monomial basis is $\lesssim\norm{a^{(N)}}_2$ when $\abs{z}\approx 1$,
so that the resulting error is~$\lesssim u\cdot \norm{a^{(N)}}_2$,
which is around the same size as $2\mach\cdot
\gamma_N\Lambda_N \norm{a^{(N)}}_2$.
\end{remark}

\begin{remark}
  \label{rem:leastsq_mono}
It is well-known that selecting a set of interpolation points with a small
Lebesgue constant is crucial for the interpolating polynomial $P_N$ to
approximate the function $F$ nearly as accurately as the optimal polynomial
approximant \cite{nick}.  This requirement on the Lebesgue constant
$\Lambda_N$ of the interpolation points, that $\Lambda_N\lesssim 1$, can be
circumvented by instead constructing an approximating polynomial using
least-squares fitting. To construct such an approximating polynomial, one
typically selects sample points on the boundary of the domain and uses a
polynomial basis whose dimension is smaller than the number of sample
points, as in, for example, \cite{nick2}.  Although our theoretical analysis
only addresses approximations constructed using interpolation, the analysis
for the least-squares case bears considerable similarity.  The principal
difference lies in the requirement for a more general notion of the Lebesgue
constant. One such idea, using the 2-norm of the pseudoinverse mapping
values at the sample points to the coefficients of an expansion in a basis,
appears in \cite{mhz}.
\end{remark}

In order to determine the relationship between the size of the monomial
coefficient vector and the properties of the function being approximated, we
will need the following definition, which generalizes the Bernstein ellipse
to the case of a simply connected compact set in the complex plane. 
\begin{definition}
  \label{def:genbern}
Given a simply connected compact set $\Omega$ in the complex plane and
$\rho>1$, we define $E_\rho$ to be the level set $\{x+iy\in\C:G(x,y)=\log
\rho\}$, where $G:\R^2\to\R$ is the unique solution to the exterior Laplace
equation
  \begin{align}
\hspace*{-0em}\nabla^2 G = &\,0 \text{ in } \R^2\setminus\Omega, \notag\\
\hspace*{-0em}G=&\,0 \text{ on } \partial\Omega,\notag\\
G(x,y)\sim&\, \log r \text{ as }  r:=\sqrt{x^2+y^2} \to\infty.\label{for:lap}
  \end{align}
We let $E_\rho^o$ denote the bounded open region with boundary
$E_\rho$. 
\end{definition}

\begin{remark}
  \label{rem:conform}
One can show that the function $G$ defined above is positive in
$\R^2\setminus\Omega$. Given a harmonic conjugate
$H:\R^2\setminus\Omega\to\R$ of $G$, the function $\Phi(x+iy):=e^{G(x,y)+i
H(x,y)}$ is a conformal mapping from $\C\setminus \Omega$ to $\C\setminus
\overbar D_1$, where $D_1$ is the open unit disk centered at the origin (see
Section 4.1 in \cite{walsh}). When $\partial\Omega$ is continuous,
$\Phi^{-1}$ can be extended continuously from $\C\setminus D_1$ to
$\overbar{\C \setminus \Omega}$ (see Section 2.1 in \cite{bbcm}).
Furthermore, when $\partial\Omega$ is a Jordan curve, $\Phi$ becomes a
continuous bijective map from $\C\setminus D_1$ to $\overbar{\C \setminus
\Omega}$.
\end{remark}

When $\Omega=[a,b]\subset \R$, the level set $E_\rho$ is a
Bernstein ellipse with parameter $\rho$, with foci at $a$ and $b$.
In Figure \ref{fig:erho}, we plot examples of level sets $E_\rho$ 
for various sets $\Omega$, for different values of $\rho$.

\begin{figure}
    \centering
    \begin{subfigure}{0.49\textwidth}
      \centering
      \includegraphics[width=\textwidth]{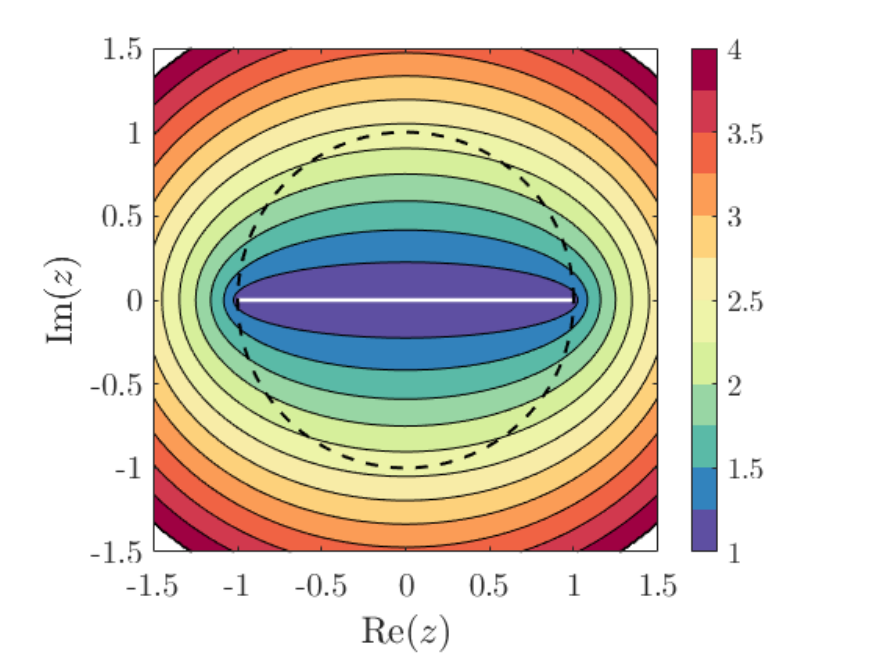}
      \caption{$\Omega=\left\{t:t\in[-1,1]\right\}$}
  \label{fig:erhoa}
    \end{subfigure}
  \begin{subfigure}{0.49\textwidth}
      \centering
      \includegraphics[width=\textwidth]{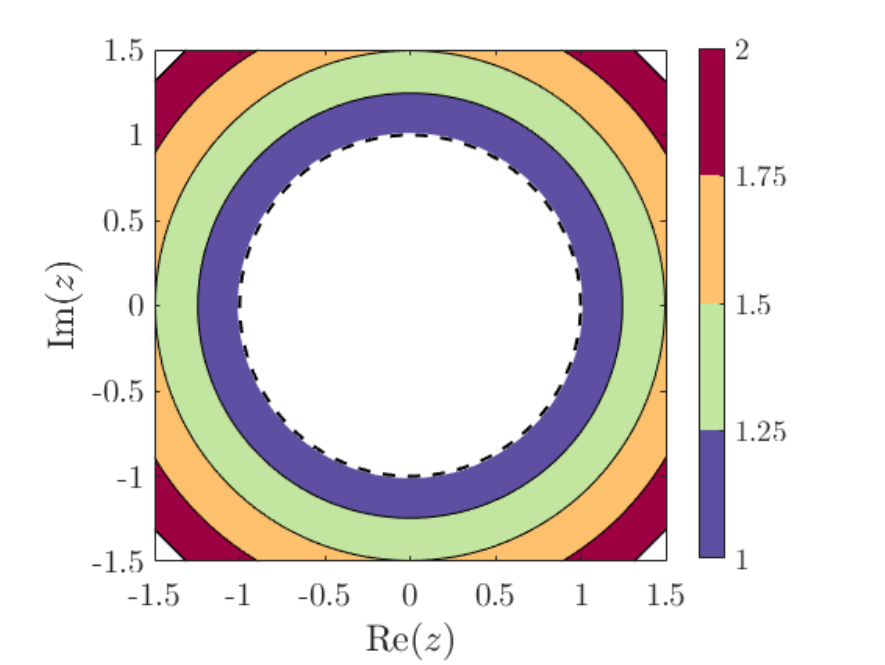}
      \caption{$\Omega=\left\{r e^{\pi it}:r\in[0,1], t\in[-1,1]\right\}$}
    \end{subfigure}
  \begin{subfigure}{0.49\textwidth}
      \centering
      \includegraphics[width=\textwidth]{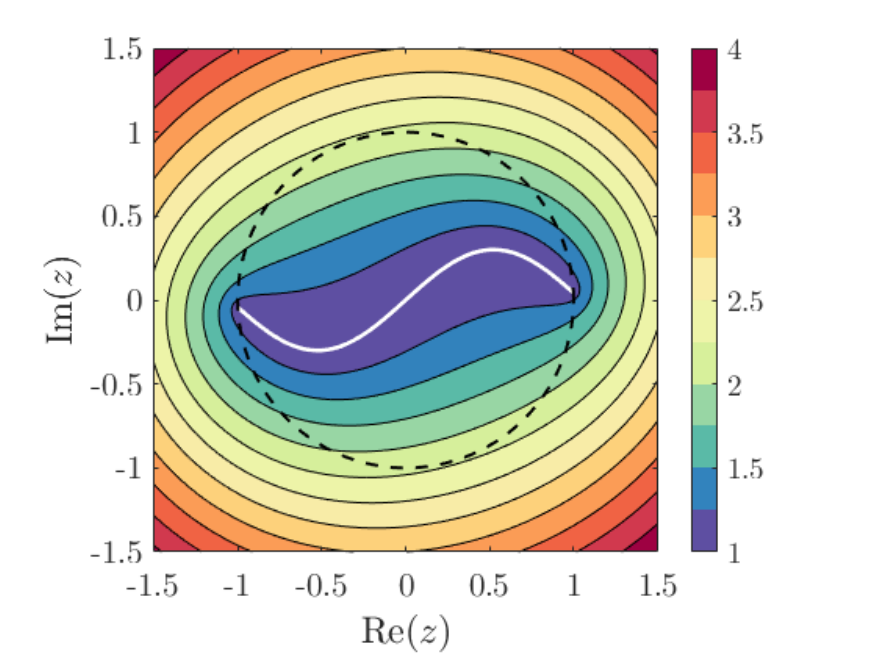}
      \caption{$\Omega=\left\{t+0.3i\sin(3t):t\in[-1,1]\right\}$}
    \end{subfigure}
  \begin{subfigure}{0.49\textwidth}
      \centering
      \includegraphics[width=\textwidth]{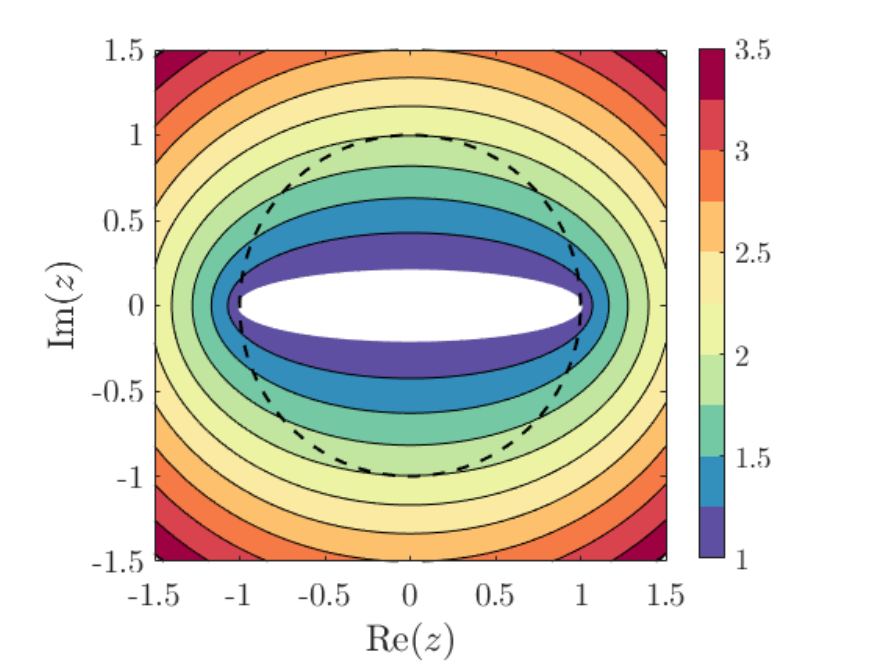}
      \caption{$\Omega=\left\{r\bigl(\cos(\pi t) + 0.2i \sin(\pi
      t)\bigr): r\in[0,1], t\in[-1,1]\right\}$}
    \label{fig:erho_elli}
    \end{subfigure}
  \begin{subfigure}{0.49\textwidth}
      \centering
      \includegraphics[width=\textwidth]{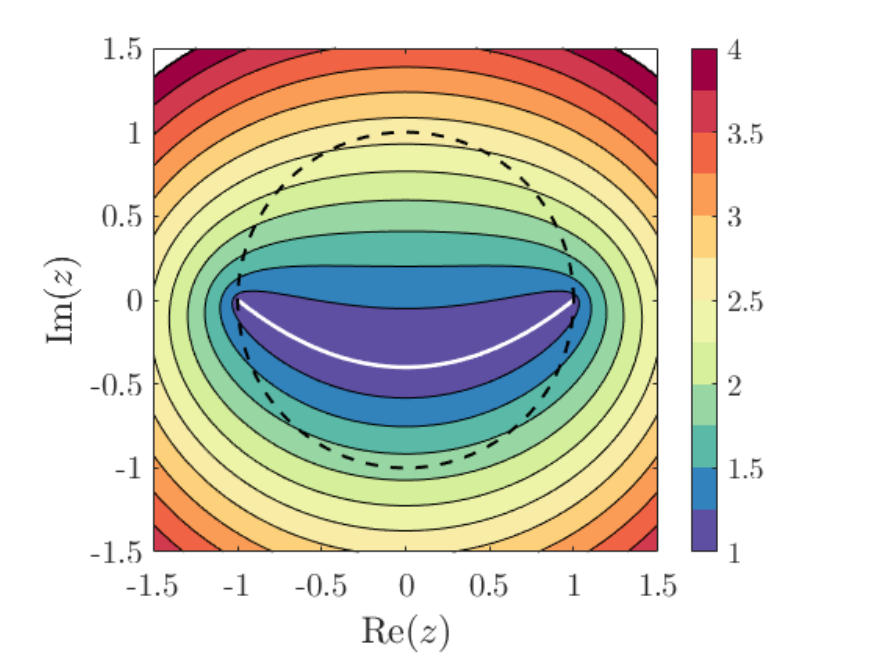}
      \caption{$\Omega=\left\{t + 0.4i(t^2-1):t\in[-1,1]\right\}$}
    \label{fig:erho_para}
    \end{subfigure}
  \begin{subfigure}{0.49\textwidth}
      \centering
      \includegraphics[width=\textwidth]{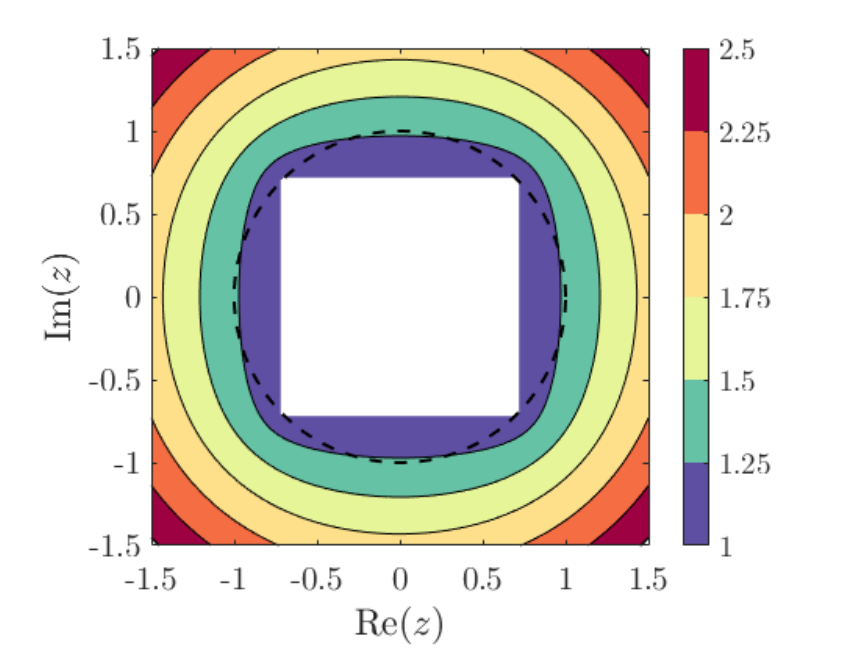}
      \caption{$\Omega=\left\{a+b i:a,b\in[-\sqrt{2}/2,\sqrt{2}/2]\right\}$}
    \label{fig:erho_box}
    \end{subfigure}
\caption{{\bf The level sets $E_\rho$ for several values of $\rho$,
corresponding to various sets $\Omega$}. 
The level sets $E_{\rho}$ for $\rho=1,1.25,1.5, \dots$ appear as the
boundaries between the colored regions in this figure. The color
bar indicates which boundary corresponds to which value of $\rho$. The set
$\Omega$ is shown in white.  The dashed circle is the boundary of the open
unit disk, used in the definition of $\rho_*$ in Definition~\ref{def:bern}.
The plots were made using the source code provided in \cite{barnett}.}
  \label{fig:erho}
\end{figure}

The parameter $\rho_*$ defined below appears in our bounds for both the
2-norm of the monomial coefficient vector of the interpolating polynomial,
and the growth rate of the 2-norm of the inverse of a Vandermonde matrix. It
denotes the parameter of the smallest region $E_\rho^o$ that contains the
open unit disk centered at the origin. 
\begin{definition}
  \label{def:bern}
Given a simply connected compact set $\Omega\subset\C$, we define 
$\rho_*:=\inf\{\rho>1:D_1\subset E_{\rho}^o\}$, where $D_1$ is the open
unit disk centered at the origin, and $E_\rho^o$ is the region corresponding
to $\Omega$ defined in Definition~\ref{def:genbern}.
\end{definition}

The following lemma provides an upper bound for the 2-norm of the monomial
coefficient vector of an arbitrary polynomial. We will use this lemma to
prove a much tighter bound in Theorem \ref{thm:coeffnorm}.
\begin{lemma}
  \label{lem:bnd}
Let $P_N:\C\to\C$ be a polynomial of degree $N$, where $P_N(z)=\sum_{k=0}^N
a_k z^k$ for some $a_0,a_1,\dots,a_N\in\C$.  Let $D_1$ denote the open unit
disk centered at the origin, and suppose that $\Omega\subset \C$ is simply
connected and compact.  The 2-norm of the coefficient vector
$a^{(N)}:=(a_0,a_1,\dots,a_N)^T$ satisfies
  \begin{align}
\norm{a^{(N)}}_2\leq\norm{P_N}_{L^\infty(\partial D_1)}\leq \rho_*^N
\norm{P_N}_{L^\infty(\Omega)},
  \end{align}
where  $\rho_*$ is given in Definition~\ref{def:bern}.
\end{lemma}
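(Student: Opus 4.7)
The plan is to prove the two inequalities separately. The first inequality, $\norm{a^{(N)}}_2\leq \norm{P_N}_{L^\infty(\partial D_1)}$, is a direct application of Parseval's identity. Observing that the Taylor coefficients $a_0,\dots,a_N$ of the polynomial $P_N$ coincide with the nonzero Fourier coefficients of the continuous function $\theta\mapsto P_N(e^{i\theta})$ on $\partial D_1$, I would write
\begin{align}
\norm{a^{(N)}}_2^2 = \sum_{k=0}^{N}\abs{a_k}^2 = \frac{1}{2\pi}\int_0^{2\pi}\babs{P_N(e^{i\theta})}^2\,d\theta \leq \norm{P_N}_{L^\infty(\partial D_1)}^2,
\end{align}
and take square roots.

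For the second inequality, the plan is to invoke the Bernstein--Walsh lemma, applied to the Green's function $G$ of $\C\setminus\Gamma$ introduced in Definition~\ref{def:genbern}. Concretely, I would consider the function $u(z):=\log\abs{P_N(z)}-NG(z)$ on $\C\setminus\Gamma$. Since $\log\abs{P_N}$ is subharmonic and $-NG$ is harmonic on this set, $u$ is subharmonic there. On $\Gamma$, where $G=0$, we have $u(z)=\log\abs{P_N(z)}\leq \log\norm{P_N}_{L^\infty(\Gamma)}$. As $\abs{z}\to\infty$, the leading-order term $a_N z^N$ of $P_N$ combined with the normalization $G(z)\sim\log\abs{z}$ guarantees that $u(z)$ remains bounded, indeed $u(z)\to \log\abs{a_N}$. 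By the maximum principle for subharmonic functions on the unbounded domain $\C\setminus\Gamma$, we obtain
\begin{align}
\abs{P_N(z)}\leq e^{NG(z)}\norm{P_N}_{L^\infty(\Gamma)} \text{ for all } z\in\C.
\end{align}

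The definition of $\rho_*$ (Definition~\ref{def:bern}) tells us that $D_1\subset \overline{E_{\rho_*}^o}$, so every $z\in\partial D_1$ satisfies $G(z)\leq \log\rho_*$. Substituting this into the Bernstein--Walsh bound yields $\abs{P_N(z)}\leq \rho_*^N\norm{P_N}_{L^\infty(\Gamma)}$ for every $z\in\partial D_1$, and taking the supremum over $\partial D_1$ completes the proof of the second inequality and hence of the lemma.

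The main technical step is the Bernstein--Walsh estimate, and within it, the potential-theoretic subtlety is checking the behavior at infinity carefully enough to apply the maximum principle on the unbounded domain $\C\setminus\Gamma$. Once that is in hand, specializing to $\partial D_1\subset \overline{E_{\rho_*}^o}$ via the definition of $\rho_*$ is routine, and the Parseval step is standard.
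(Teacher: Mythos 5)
Your proof is correct and follows essentially the same route as the paper: Parseval's identity on $\partial D_1$ for the first inequality, and the Bernstein--Walsh bound $\abs{P_N(z)}\leq e^{NG(z)}\norm{P_N}_{L^\infty(\Gamma)}$ combined with $G\leq\log\rho_*$ on $\partial D_1$ for the second. The only difference is that the paper simply cites this Bernstein-type inequality from Walsh's book, whereas you prove it via the maximum principle for the subharmonic function $\log\abs{P_N}-NG$; that argument is sound (the limit at infinity may differ from $\log\abs{a_N}$ by $N$ times the Robin constant depending on the normalization of $G$, but boundedness is all you need).
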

\begin{proof}
Observe that $P_N(e^{i\theta})=\sum_{k=0}^N a_k e^{ik\theta}$.
By Parseval's identity, we have that
  \begin{align}
\hspace*{-3.5em}
\norm{a^{(N)}}_2=\Bigl(\frac{1}{2\pi}\int_0^{2\pi} \abs{P_N(e^{i\theta})}^2 \d
\theta\Bigr)^{1/2}\leq \norm{P_N}_{L^\infty(\partial D_1)} \leq 
\norm{P_N}_{L^\infty(E_{\rho_*}^o)},
  \end{align}
where the last inequality comes from the fact that $D_1\subset E_{\rho_*}^o$
(see Definition \ref{def:genbern}). Finally, using one of Bernstein's
inequalities (see Section 4.6 in \cite{walsh}), we have that
  \begin{align}
\norm{P_N}_{L^\infty(E_{\rho_*}^o)}\leq \rho_*^N
\norm{P_N}_{L^\infty(\Omega)}.
  \end{align}
\end{proof}

The following theorem is the principal result of this paper. It demonstrates
that the upper bound for the 2-norm of the monomial coefficient vector of an
arbitrary polynomial, provided in Lemma \ref{lem:bnd}, can be significantly
improved if the given polynomial is viewed as an interpolating
polynomial of a function $F$. Instead of using only the size of
$\norm{P_N}_{\li(\Omega)}$, this theorem uses the properties of the function
$F$.
\begin{theorem}
  \label{thm:coeffnorm}
Let $\Omega$ be a simply connected compact set in the complex plane, and let
$F:\Omega\to\C$ be an arbitrary function. Suppose that there exists
a finite sequence of polynomials $\{Q_n\}_{n=0,1,\dots,N}$, where
$Q_n$ has degree $n$, which satisfies 
  \begin{align}
\norm{F-Q_n}_{L^\infty(\Omega)}\leq C_N\rho^{-n},\quad 0\leq n\leq
N,\label{for:exist}
  \end{align}
for some constants $\rho>1$ and $C_N\geq 0$.
Define $P_N(z)=\sum_{k=0}^N a_k z^k$ to be
the $N$th degree interpolating polynomial of $F$ for a given set of distinct
interpolation points $Z=\{z_j\}_{j=0,1,\dots,N}\subset \Omega$. The 2-norm of
the monomial coefficient vector $a^{(N)}:=(a_0,a_1,\dots,a_N)^T$ of $P_N$
satisfies
  \begin{align}
\norm{a^{(N)}}_2 \leq \norm{F}_{\li(\Omega)}+
C_N\Bigl(\Lambda_N\Bigl(\frac{\rho_*}{\rho}\Bigr)^N+ 2 \rho_*\sum_{j=0}^{N-1}
\Bigl(\frac{\rho_*}{\rho}\Bigr)^{j} + 1\Bigr),
  \end{align}
where $\rho_*$ is given in Definition \ref{def:bern}, and
$\Lambda_N$ denotes the Lebesgue constant for $Z$.
\end{theorem}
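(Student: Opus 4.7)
The plan is to build $P_N$ out of the approximating polynomials $Q_n$, so that Lemma \ref{lem:bnd} (applied to each piece) can be combined with the geometric decay hypothesis \eqref{for:exist} to control the monomial coefficients. The starting observation is that $P_N = I_N F$, where $I_N$ denotes the interpolation operator at $Z$, and that $I_N$ fixes polynomials of degree at most $N$. Consequently,
\begin{align}
P_N = Q_N + I_N(F-Q_N),
\end{align}
and furthermore $Q_N$ may be telescoped as $Q_N = Q_0 + \sum_{j=0}^{N-1}(Q_{j+1}-Q_j)$, where each difference $R_j := Q_{j+1}-Q_j$ is a polynomial of degree at most $j+1$.

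Next, I would apply the triangle inequality on the monomial coefficient $2$-norm to the decomposition $P_N = Q_0 + \sum_{j=0}^{N-1} R_j + I_N(F-Q_N)$, and bound each summand using Lemma \ref{lem:bnd} in the form $\norm{\text{coeffs of } R}_2 \le \rho_*^{\deg R}\norm{R}_{L^\infty(\Gamma)}$. For $Q_0$ (degree $0$), the coefficient $2$-norm is simply $\abs{Q_0}$, which by the triangle inequality and \eqref{for:exist} with $n=0$ is at most $\norm{F}_{L^\infty(\Gamma)} + C_N$. For $R_j$, the triangle inequality together with \eqref{for:exist} gives $\norm{R_j}_{L^\infty(\Gamma)} \le C_N(\rho^{-(j+1)}+\rho^{-j}) \le 2C_N\rho^{-j}$, so Lemma \ref{lem:bnd} yields $\norm{\text{coeffs of }R_j}_2 \le 2C_N\rho_*(\rho_*/\rho)^j$. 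For the interpolation remainder, since $I_N(F-Q_N)$ has degree at most $N$ and vanishes nowhere better than its values on $Z$, the Lebesgue constant gives $\norm{I_N(F-Q_N)}_{L^\infty(\Gamma)} \le \Lambda_N\norm{F-Q_N}_{L^\infty(\Gamma)} \le \Lambda_N C_N \rho^{-N}$, and Lemma \ref{lem:bnd} then produces $\norm{\text{coeffs of }I_N(F-Q_N)}_2 \le C_N\Lambda_N(\rho_*/\rho)^N$.

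Summing the three contributions gives exactly
\begin{align}
\norm{a^{(N)}}_2 \le \norm{F}_{L^\infty(\Gamma)} + C_N\Bigl(\Lambda_N(\rho_*/\rho)^N + 2\rho_*\sum_{j=0}^{N-1}(\rho_*/\rho)^j + 1\Bigr),
\end{align}
which is the claimed bound. There is no serious obstacle: the only mild subtlety is to arrange the decomposition so that a clean $\norm{F}_{L^\infty(\Gamma)}$ term (rather than $\norm{Q_0}_{L^\infty(\Gamma)}$) appears in the final estimate, which is handled by estimating the single constant polynomial $Q_0$ directly against $F$. The structure of the bound, with its three summands $\Lambda_N(\rho_*/\rho)^N$, $2\rho_*\sum_j(\rho_*/\rho)^j$, and $1$, is a direct reflection of the three-term decomposition $Q_0 + \sum R_j + I_N(F-Q_N)$.
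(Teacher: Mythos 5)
Your proposal is correct and follows essentially the same route as the paper's proof: the same telescoping $Q_N=Q_0+\sum_{j}(Q_{j+1}-Q_j)$, the same application of Lemma \ref{lem:bnd} to each piece, and the same treatment of the remainder via the observation that $P_N-Q_N$ interpolates $F-Q_N$ (your $I_N(F-Q_N)$), bounded by the Lebesgue constant. All the individual estimates match the paper's line by line.
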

\begin{proof}
Given $n\geq 0$, let $M^{(n)}:\C^{n+1}\to \P_n$ be the bijective linear map
associating each vector $(u_0,u_1,\dots,u_n)^T\in\R^{n+1}$ with the $n$th
degree polynomial $\sum_{k=0}^n u_k z^k\in\P_n$.
It follows immediately from Lemma \ref{lem:bnd} that, given any 
polynomial $P\in\P_n$,
  \begin{align}
\bnorm{(M^{(n)})^{-1}[P]}_{2}\leq \rho_*^n \norm{P}_{\li(\Omega)}.
  \end{align}
Therefore, by the triangle inequality, the 2-norm of the monomial
coefficient vector of the polynomial $Q_N$ satisfies
  \begin{align}
  \hspace*{-2em}
\bnorm{(M^{(N)})^{-1}[Q_N]}_2\leq\,& \bnorm{(M^{(N)})^{-1}[Q_0]}_2 +
\sum_{j=0}^{N-1}\bnorm{(M^{(N)})^{-1}[Q_{j+1}-Q_{j}]}_2 \notag\\
=\,&  \norm{Q_0}_{\li(\Omega)} +
\sum_{j=0}^{N-1}\bnorm{(M^{(j+1)})^{-1}[Q_{j+1}-Q_{j}]}_2\notag\\
\leq \,& \bigl(\norm{F}_{\li(\Omega)}+C_N\bigr)+ \sum_{j=0}^{N-1}
\rho_*^{j+1}\snorm{Q_{j+1}-Q_{j}}_{\li(\Omega)} \notag\\ 
\leq \,&
\bigl(\norm{F}_{\li(\Omega)}+ C_N\bigr)+2C_N \rho_*\sum_{j=0}^{N-1}
\Bigl(\frac{\rho_*}{\rho}\Bigr)^{j},
  \label{for:boundp3}
  \end{align}
from which it follows that $\norm{a^{(N)}}_2$ satisfies
\begin{align}
 \norm{a^{(N)}}_2\leq\,& \bnorm{(M^{(N)})^{-1}[P_N-Q_N]}_2 +
 \bnorm{(M^{(N)})^{-1}[Q_N]}_2\notag\\
\leq \,& \rho_*^N\norm{P_N-Q_N}_{\li(\Omega)}+
\bnorm{(M^{(N)})^{-1}[Q_N]}_2\notag\\
\leq \,& \rho_*^N\Lambda_N\norm{F-Q_N}_{\li(\Omega)}+
\bnorm{(M^{(N)})^{-1}[Q_N]}_2\notag\\
\leq \,&\norm{F}_{\li(\Omega)}+
C_N\Bigl(\Lambda_N\Bigl(\frac{\rho_*}{\rho}\Bigr)^N+ 2
\rho_*\sum_{j=0}^{N-1} \Bigl(\frac{\rho_*}{\rho}\Bigr)^{j}+1\Bigr), 
\label{for:boundp4}
\end{align}
where the third inequality comes from the observation that $P_N-Q_N$ is the
interpolating polynomial of $F-Q_N$ for the set of interpolation points $Z$.
\end{proof}

\begin{remark}
The assumption \eqref{for:exist} made in Theorem \ref{thm:coeffnorm} can be
satisfied for any function $F$ by choosing $C_N$ to be sufficiently large.
When the function $F$ has, for some $\rho>\rho_*$, an analytic continuation
on the closure of the region $E_\rho^o$ corresponding to $\Omega$ (see
Definitions~\ref{def:genbern} and $\ref{def:bern}$), 
one can show that $\norm{a^{(N)}}_2\lesssim 2\norm{F}_{\li(\partial D_1)}$,
where $D_1$ is the open unit disk centered at the origin.  This result is
derived from Section 4.7 in \cite{walsh}, which establishes that
$\norm{F-P_N}_{\li(E_{\rho_*}^o)}=\O\bigl((\rho_*/\rho)^N\bigr)$, where the
asymptotic constant is proportional to $\norm{F}_{\li(E_{\rho_*}^o)}$.
Consequently, it follows that
  \begin{align}
  \hspace*{-6.0em}
\norm{a^{(N)}}_2\leq
\norm{P_{N}}_{L^\infty(\partial D_1)} \leq
\norm{F-P_{N}}_{\li(\partial D_1)}+\norm{F}_{\li(\partial D_1)}\lesssim
2\norm{F}_{\li(\partial D_1)}.
  \end{align}
\end{remark}

Theorem \ref{thm:coeffnorm} provides an a priori upper bound on the 2-norm
of the monomial coefficient vector. In order to apply Theorem
\ref{thm:mono_err}, we also need to determine when the condition
$\norm{(V^{(N)})^{-1}}_2\leq \frac{1}{2u\cdot \gamma_N}$ applies. 
The following theorem, which is an immediate corollary of Lemma
\ref{lem:bnd}, bounds the growth of $\norm{(V^{(N)})^{-1}}_2$. It includes
some previous results \cite{walter2,walter1} as special cases of our bound. 
\begin{theorem}
  \label{thm:cond2}
Suppose that $V^{(N)}\in\C^{(N+1)\times(N+1)}$ is a Vandermonde matrix with
$(N+1)$ distinct interpolation points $Z=\{z_j\}_{j=0,1,\dots,N}\subset \C$.
Suppose further that $\Omega\subset \C$ is a simply connected compact set such that
$Z\subset\Omega$. The 2-norm of $(V^{(N)})^{-1}$ is bounded by
  \begin{align}
\norm{(V^{(N)})^{-1}}_2\leq  \rho_*^N \Lambda_N,\label{for:cond2}
  \end{align}
where $\rho_*$ is given in Definition \ref{def:bern}, and
$\Lambda_N$ denotes the Lebesgue constant for the set
of interpolation points $Z$ over $\Omega$.
\end{theorem}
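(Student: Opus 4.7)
The plan is to exploit the fact that $(V^{(N)})^{-1}$ is precisely the linear map that sends a vector of function values at the collocation points to the monomial coefficient vector of the interpolating polynomial, and then to bound this operator via the two previously established ingredients: Lemma \ref{lem:bnd}, which controls monomial coefficients by sup norms on $\Gamma$, and the Lebesgue constant, which controls interpolating-polynomial sup norms by nodal values.

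Concretely, I would fix an arbitrary vector $f=(f_0,f_1,\dots,f_N)^T\in\C^{N+1}$, define $a:=(V^{(N)})^{-1}f$, and observe that $a$ is the monomial coefficient vector of the unique polynomial $P_N(z)=\sum_{k=0}^N a_k z^k$ of degree at most $N$ satisfying $P_N(z_j)=f_j$ for all $j$. Applying Lemma \ref{lem:bnd} to $P_N$ yields
  \begin{align}
\norm{a}_2 \leq \rho_*^N \norm{P_N}_{L^\infty(\Gamma)}.
  \end{align}

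Next, by the definition of the Lebesgue constant $\Lambda_N$ for the nodes $Z$ on $\Gamma$, we have $\norm{P_N}_{L^\infty(\Gamma)} \leq \Lambda_N \max_j \abs{f_j}$, and since $\max_j \abs{f_j} \leq \norm{f}_2$, we obtain
  \begin{align}
\norm{(V^{(N)})^{-1} f}_2 = \norm{a}_2 \leq \rho_*^N \Lambda_N \norm{f}_2.
  \end{align}
Taking the supremum over unit vectors $f$ gives the claimed bound $\norm{(V^{(N)})^{-1}}_2 \leq \rho_*^N \Lambda_N$.

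There is essentially no obstacle here; the argument is a clean composition of the two lemmas. The only small subtlety is remembering to pass between $\ell^\infty$ and $\ell^2$ on $\C^{N+1}$ via the elementary bound $\max_j \abs{f_j}\leq \norm{f}_2$, which is needed because the Lebesgue constant controls sup norms in terms of the max of nodal values, whereas the operator norm we want is with respect to $\ell^2$.
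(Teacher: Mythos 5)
Your proposal is correct and matches the paper's own proof essentially line for line: both fix an arbitrary right-hand side, identify $(V^{(N)})^{-1}f$ as the monomial coefficient vector of the interpolant, apply Lemma \ref{lem:bnd}, then the Lebesgue constant bound, and finally pass from $\norm{f^{(N)}}_\infty$ to $\norm{f^{(N)}}_2$ before taking the supremum. No differences worth noting.
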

\begin{proof}
Let $f^{(N)}=(f_0,f_1,\dots, f_N)^T\in\C^{N+1}$ be an arbitrary vector.
Suppose that $P_N$ is an interpolating polynomial of degree $N$ for the set
$\{(z_j,f_j)\}_{j=0,1,\dots,N}$. By Lemma \ref{lem:bnd}, the
2-norm of the monomial coefficient vector $a^{(N)}$ of $P_N$ satisfies 
  \begin{align}
\norm{a^{(N)}}_2\leq \rho_*^N\norm{P_N}_{L^{\infty}(\Omega)}\leq \rho_*^N
\Lambda_N \norm{f^{(N)}}_\infty\leq \rho_*^N \Lambda_N
\norm{f^{(N)}}_2,\label{for:cond222}
  \end{align}
where the second inequality follows from the definition of the Lebesgue
constant. Therefore, the 2-norm of $(V^{(N)})^{-1}$ is bounded by
  \begin{align}
  \hspace*{-4.5em}
\norm{(V^{(N)})^{-1}}_2= \sup_{f^{(N)}\neq
0}\Bigl\{\frac{\norm{(V^{(N)})^{-1}f^{(N)}}_2}{\norm{f^{(N)}}_2}\Bigr\} =
\sup_{f^{(N)}\neq
0}\Bigl\{\frac{\norm{a^{(N)}}_2}{\norm{f^{(N)}}_2}\Bigr\}\leq\rho_*^N
\Lambda_N.\label{for:invv}
  \end{align}
\end{proof}

Note that the bound above applies to any simply connected compact domain
$\Omega\subset\C$ that contains the set of interpolation points $Z$.

\begin{observation}
  \label{obs:cond2}
In the case where the set of $(N+1)$ interpolation points $Z\subset\Omega$
are chosen such that the associated Lebesgue constant $\Lambda_N\lesssim 1$,
we observe in practice that the upper bound $\rho_*^N\Lambda_N$ is close to
the value of $\norm{(V^{(N)})^{-1}}_2$ (see Figures~\ref{fig:cond_veri}
and~\ref{fig:cond_general} for numerical evidence).
\end{observation}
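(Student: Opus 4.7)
The plan is to support Observation \ref{obs:cond2} through a combination of direct numerical computation and a heuristic lower-bound construction, since the statement is qualitative (``reasonably close'') rather than a precise two-sided bound. The numerical verification underlying Figures \ref{fig:cond_veri} and \ref{fig:curv_cond_leb1} proceeds as follows: for a selection of smooth arcs $\Gamma$ (the interval $[-1,1]$, sine-type curves, and so on), first compute $\rho_*$ by numerically solving the exterior Laplace problem \eqref{for:lap} and locating the smallest level set containing $D_1$; second, evaluate the Lebesgue constant $\Lambda_N$ for Chebyshev-type nodes, which grows only logarithmically; third, compute $\norm{(V^{(N)})^{-1}}_2$ via the SVD in sufficiently high precision to resolve the smallest singular value; and finally, plot $\norm{(V^{(N)})^{-1}}_2$ alongside $\rho_*^N \Lambda_N$ on a logarithmic scale, where ``reasonably close'' is interpreted as agreement up to a slowly growing polynomial-in-$N$ factor against an exponential backdrop.

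To justify the near-sharpness analytically, I would revisit the chain of inequalities in the proof of Theorem \ref{thm:cond2} and construct an explicit witness $f^{(N)}$ whose ratio $\norm{a^{(N)}}_2 / \norm{f^{(N)}}_2$ is within a polynomial factor of $\rho_*^N$. The natural candidate is a Bernstein-extremal polynomial: for $\Gamma = [-1,1]$, take $P_N = T_N$, the Chebyshev polynomial of the first kind, whose $\li([-1,1])$ norm equals $1$ and whose $\li(\partial D_1)$ norm is asymptotically $\rho_*^N/2$ with $\rho_* = 1 + \sqrt{2}$ (one checks $\abs{T_N(\pm i)} \sim \rho_*^N/2$). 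By Parseval's identity on $\partial D_1$, the monomial coefficient vector satisfies $\norm{a^{(N)}}_2 = \Theta(\rho_*^N / \sqrt{N})$, where the $1/\sqrt{N}$ factor arises from Laplace-type concentration of $\abs{T_N(e^{i\theta})}$ near $\theta = \pm\pi/2$. Sampling $T_N$ at Chebyshev extrema yields $f^{(N)}$ with $\norm{f^{(N)}}_\infty = 1$ and $\norm{f^{(N)}}_2 = \sqrt{N+1}$, so that $\norm{a^{(N)}}_2/\norm{f^{(N)}}_2 = \Theta(\rho_*^N / N)$. For a general arc, one would replace $T_N$ by its Faber-polynomial analogue built from the conformal map from $\hat\C\setminus\Gamma$ to the exterior of the unit disk implicit in Definition \ref{def:genbern}.

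The main obstacle to upgrading this to a rigorous theorem is the clean construction of the Bernstein-extremal polynomial on a general smooth arc together with sharp control on its sampled oscillations, which is what governs the gap between $\norm{f^{(N)}}_\infty$ and $\norm{f^{(N)}}_2$. Nevertheless, the witness lower bound $\Omega(\rho_*^N / N)$ already differs from the upper bound $\rho_*^N \Lambda_N$ only by a factor of $\O(N\Lambda_N) = \O(N\log N)$ for Chebyshev-type nodes, which is negligible compared to the exponential factor $\rho_*^N$. This is precisely why the two quantities appear to track each other closely on the logarithmic plots referenced by the observation, and why it is stated as an observation rather than as a theorem.
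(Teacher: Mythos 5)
Your proposal is consistent with what the paper actually does for this statement: since it is an Observation rather than a theorem, the paper offers no proof at all, only the numerical evidence of Figures \ref{fig:cond_veri} and \ref{fig:curv_cond_leb1}, which is exactly the computation you describe in your first paragraph (estimate $\rho_*$ from the level sets of the exterior Green's function, compute $\Lambda_N$ and $\norm{(V^{(N)})^{-1}}_2$, and compare against $\rho_*^N\Lambda_N$ on a log scale). Where you go beyond the paper is the witness construction: choosing $f^{(N)}$ to be the samples of $T_N$ at the nodes, so that the interpolant is $T_N$ itself by uniqueness, and then estimating $\norm{a^{(N)}}_2/\norm{f^{(N)}}_2$ to get a lower bound on $\norm{(V^{(N)})^{-1}}_2$ within a polynomial-in-$N$ factor of the upper bound $\rho_*^N\Lambda_N$; the Faber-polynomial analogue for a general arc is the right generalization, since the Green's function of Definition \ref{def:genbern} is the log-modulus of the exterior conformal map. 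This is a genuine addition that explains \emph{why} the bound tracks the observed values, rather than merely exhibiting that it does, and it correctly identifies $z=\pm i$ as the points where $|T_N|$ attains its maximum on $\partial D_1$ with value $\sim\tfrac12(1+\sqrt2)^N$.

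One small quantitative correction: the Laplace-type concentration of $\abs{T_N(e^{i\theta})}$ near $\theta=\pm\pi/2$ gives $\norm{a^{(N)}}_2=\Theta\bigl(\rho_*^N N^{-1/4}\bigr)$, not $\Theta\bigl(\rho_*^N N^{-1/2}\bigr)$ (the integrand $\abs{T_N}^2\sim\tfrac14\rho_*^{2N}e^{-cN(\theta\mp\pi/2)^2}$ contributes a factor $N^{-1/2}$ to the integral, hence $N^{-1/4}$ after the square root), and with $\norm{f^{(N)}}_2=\Theta(\sqrt N)$ the witness ratio is $\Theta\bigl(\rho_*^N N^{-3/4}\bigr)$ rather than $\Theta(\rho_*^N/N)$. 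This does not affect your conclusion, since the gap to the upper bound remains polynomial in $N$ against the exponential factor $\rho_*^N$, which is precisely the sense in which the observation holds.
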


\begin{remark}
  \label{rem:worst}
A worst case upper bound for $\norm{a^{(N)}}_2$ is provided by inequality
\eqref{for:cond222} in the proof of Theorem \ref{thm:cond2}, as the
right-hand side of this inequality depends only on the size of $F$. 
\end{remark}

\begin{remark}
Formula (4.5) in \cite{beck_thesis} establishes the bound
$\norm{(V^{(N)})^{-1}}_\infty\leq  \Delta_N(D_1,\Omega)\cdot \Lambda_N$, where
$\Delta_N(D_1,\Omega):=\sup_{p\in\P_N,p\neq 0}
\frac{\norm{p}_{L^\infty(D_1)}}{\norm{p}_{L^\infty(\Omega)}}$, and $\P_N$
denotes the set of all polynomials of degree less than or equal to $N$. By
combining the maximum principle with Lemma \ref{lem:bnd}, we obtain the
bound $\Delta_N(D_1,\Omega)\leq \rho_*^N$, from which it follows that
$\norm{(V^{(N)})^{-1}}_\infty\leq \rho_*^N\Lambda_N$.  This result can
alternatively be obtained more directly using the inequality
  \begin{align}
  \hspace*{-1.5em}
\norm{(V^{(N)})^{-1}}_\infty=
\sup_{f^{(N)}\neq
0}\Bigl\{\frac{\norm{a^{(N)}}_\infty}{\norm{f^{(N)}}_\infty}\Bigr\}\leq 
\sup_{f^{(N)}\neq
0}\Bigl\{\frac{\norm{a^{(N)}}_2}{\norm{f^{(N)}}_\infty}\Bigr\}
\leq\rho_*^N \Lambda_N,
  \end{align}
where the final inequality follows from \eqref{for:cond222}.
\end{remark}

\subsection{Under what conditions is interpolation in the monomial basis as
good as interpolation in a well-conditioned polynomial basis?}
\label{sec:good}

As before, we assume that the simply connected compact set $\Omega$ is inside the unit
disk centered at the origin (such that $\gamma_N\lesssim 1$). Furthermore,
we choose a set of $(N+1)$ interpolation points $Z\subset \Omega$ with a
Lebesgue constant $\Lambda_N\lesssim 1$, and let $V^{(N)}$ denote the
corresponding Vandermonde matrix.  Recall from Theorem \ref{thm:mono_err}
that, if 
\begin{align}
  \norm{(V^{(N)})^{-1}}_2\leq \frac{1}{2u\cdot \gamma_N},\label{for:precon}
\end{align}
then the monomial approximation error $\norm{F-\hat
P_N}_{L^\infty(\Omega)}$ is bounded a priori by
  \begin{align}
\hspace*{-0.0em}
\norm{F-\hat P_N}_{L^\infty(\Omega)} \lesssim
\norm{F-P_N}_{L^\infty(\Omega)}+\mach \cdot \norm{a^{(N)}}_2,
\label{for:apri}
  \end{align}
where $u$ denotes machine epsilon, $\hat P_N$ is the computed monomial
expansion, $P_N$ is the exact $N$th degree interpolating polynomial of $F$
for the set of interpolation points $Z$, and~$a^{(N)}$ is the monomial
coefficient vector of $P_N$.

By Theorem~\ref{thm:coeffnorm}, if we choose a constant $C_N\geq 0$ and a
finite sequence of polynomials $\{Q_n\}_{n=0,1,\dots,N}$ such that
$\norm{F-Q_n}_{L^\infty(\Omega)}\leq C_N\rho_*^{-n}$ for $0\leq n \leq N$,
where $Q_n$ has degree $n$ and $\rho_*$ is given in Definition
\ref{def:bern}, then the monomial coefficient vector $a^{(N)}$ of
$P_N$ satisfies
  \begin{align}
\norm{a^{(N)}}_2\lesssim C_N \Lambda_N N\approx C_N,
\label{for:bpri00}
  \end{align}
and inequality \eqref{for:apri} becomes
  \begin{align}
\hspace*{-0.0em}
\norm{F-\hat P_N}_{L^\infty(\Omega)} \lesssim &\,
\norm{F-P_N}_{L^\infty(\Omega)}+\mach\cdot C_N.\label{for:bpri}
  \end{align}
In practice, one can take $\{Q_n\}_{n=0,1,\dots,N}$ to be a finite sequence
of interpolating polynomials $\{P_n\}_{n=0,1,\dots,N}$ of $F$ for sets of
interpolation points with Lebesgue constants $\lesssim 1$.  When the Lebesgue
constant $\Lambda_N$ is small, it follows from Theorem~\ref{thm:cond2} that
the condition \eqref{for:precon} is satisfied when $\rho_*^N\lesssim
\frac{1}{u}$. Suppose, then, that $N$ is sufficiently
small so that $\rho_*^N\lesssim \frac{1}{u}$.  Without loss of generality,
we assume that the upper bound for $\norm{F-P_n}_{\li(\Omega)}$, i.e.,
$C_N\rho_*^{-n}$, is tight, in the sense that there exists some integer
$n\in[0,N]$ such that $\norm{F-P_n}_{\li(\Omega)}=C_N\rho_*^{-n}$.  Note
that the smallest uniform approximation error we can hope to obtain in
practice is $u\cdot \norm{F}_{L^\infty(\Omega)}$.

When $u\cdot C_N \lesssim \max(\norm{F-P_N}_{L^\infty(\Omega)},u\cdot
\norm{F}_{L^\infty(\Omega)})$, the use of a monomial basis for interpolation
introduces essentially no extra error.  Interestingly, this happens both if
the polynomial interpolation error decays quickly and if the polynomial
interpolation error decays slowly. Suppose that the polynomial interpolation
error decays quickly, so that the bound is tight for $n=0$, i.e.,
$\norm{F-P_0}_{L^\infty(\Omega)} = C_N$.  Since $C_N \lesssim
2\norm{F}_{L^\infty(\Omega)}$, we see that the extra error caused by the use
of a monomial basis is bounded by $u \cdot C_N \lesssim 2u\cdot
\norm{F}_{L^\infty(\Omega)} \lesssim u\cdot \norm{F}_{L^\infty(\Omega)}$.
Examples of this situation are illustrated in Figure~\ref{fig:rap_new1}.
Alternatively, suppose that the polynomial interpolation error decays slowly,
so that bound is tight for $n=N$, i.e., $\norm{F-P_N}_{L^\infty(\Omega)} =
C_N\rho_*^{-N}$.  Since we assumed that $\rho_*^N\lesssim \frac{1}{u}$, it
follows that $u \cdot C_N \lesssim \norm{F-P_N}_{\li(\Omega)}$. Examples of
this situation are illustrated in Figure~\ref{fig:rap_new2}.

When $u\cdot C_N \gtrsim \max(\norm{F-P_N}_{L^\infty(\Omega)},u\cdot
\norm{F}_{L^\infty(\Omega)})$, stagnation of
convergence can occur. In practice, we observe that the extra
error caused by the use of the monomial basis, i.e.,
$u\cdot\norm{a^{(N)}}_2$, is close to $u\cdot C_N$, so the monomial
approximation error $\norm{F-\hat P_N}_{L^\infty(\Omega)}$ generally
stagnates at an error level around $u\cdot C_N$. 
Note that a slow decay in $\norm{F-P_n}_{\li(\Omega)}$ results in a larger
value of $C_N$, while a fast decay in $\norm{F-P_n}_{\li(\Omega)}$ favors a
smaller final interpolation error $\norm{F-P_N}_{L^\infty(\Omega)}$.  This
means that, for stagnation of convergence to occur, the polynomial
interpolation error has to exhibit some combination of slow decay followed by
fast decay. 
Furthermore, note that an upper bound for $C_N$ is given by $C_N \lesssim
\rho_*^N$ (see Remark~\ref{rem:worst}).  This means that, the smaller the
value of $N$, the smaller the maximum possible value of $C_N$, and the more
rapid the rate of decay in $\norm{F-P_n}_{\li(\Omega)}$ required for
stagnation of convergence to occur. We present examples of this
situation in Figure \ref{fig:rap_new3}.

\begin{figure}[h]
    \centering
  \begin{subfigure}{0.49\textwidth}
      \centering
      \includegraphics[width=\textwidth]{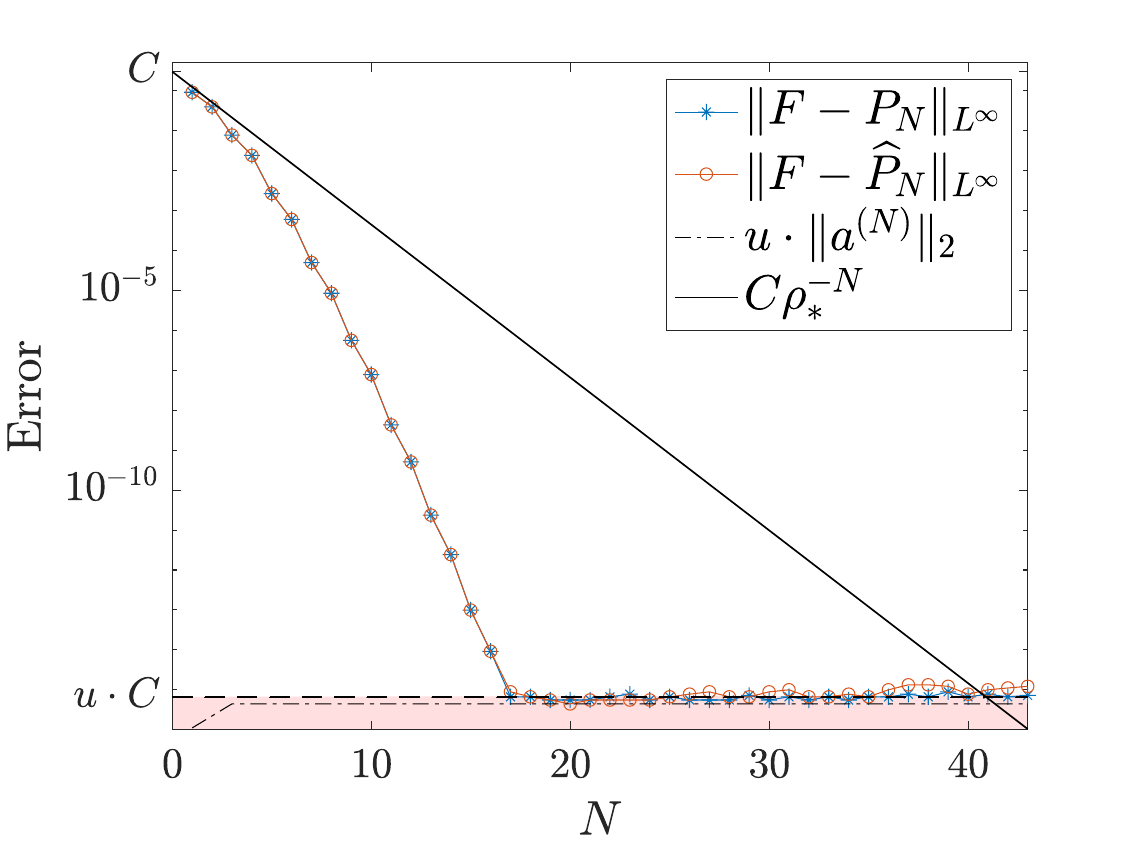}
      \caption{$F(x)=\cos(2x+1)$, $C=3\e{0}$}
    \end{subfigure}
  \begin{subfigure}{0.49\textwidth}
      \centering
      \includegraphics[width=\textwidth]{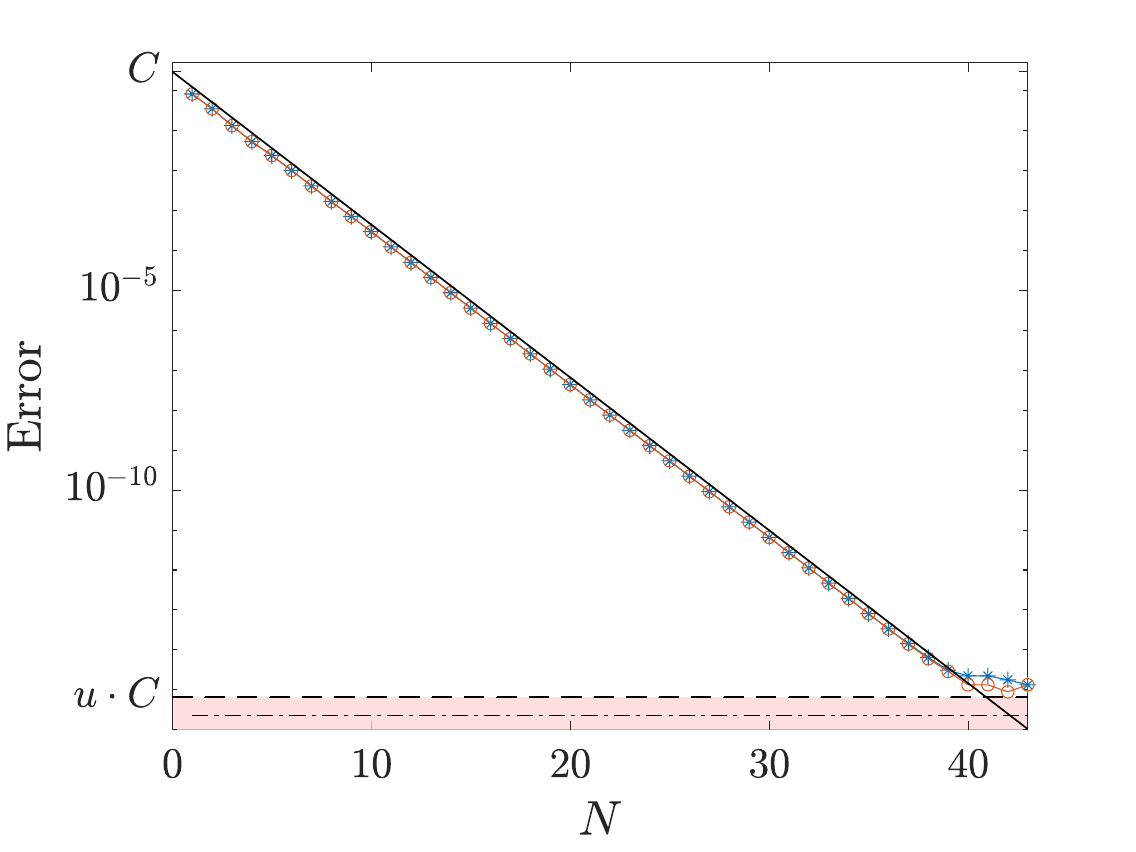}
      \caption{$F(x)=\frac{1}{x-\sqrt{2}}$, $C=3\e{0}$}
    \end{subfigure}
      \caption{{\bf Cases when the extra error caused by the use of a monomial basis is
      negligible}. The function $F$ is interpolated over $\Omega=[-1,1]$.
      The value of $C$ is chosen such that $\norm{F-P_N}_{\li([-1,1])}\leq
      C\rho_*^{-N}$ for $N=0,1,...,40$. We highlight the region bounded
      above by $u\cdot C$ in pink.} 
      \label{fig:rap_new1}
\end{figure}

\begin{figure}[h]
    \centering
  \begin{subfigure}{0.49\textwidth}
      \centering
      \includegraphics[width=\textwidth]{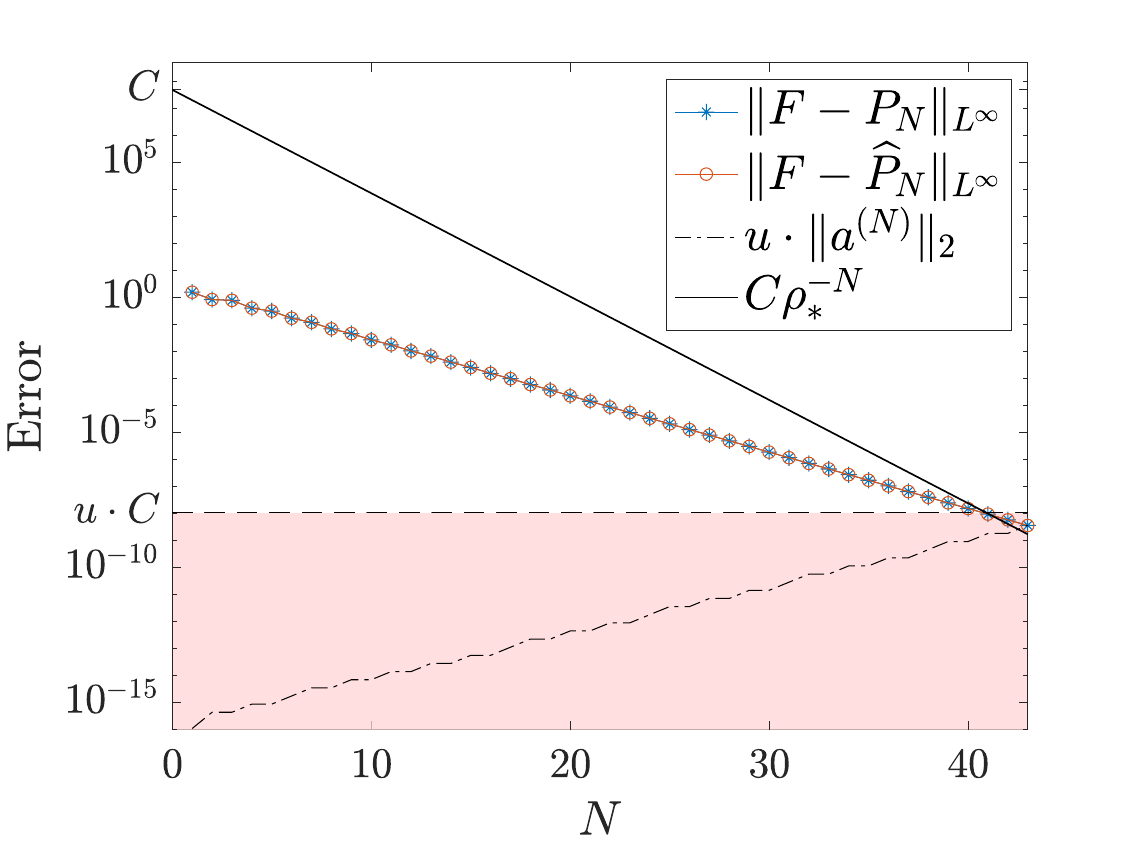}
      \caption{$F(x)=\frac{1}{x-0.5i}$, $C=5\e{7}$}
    \end{subfigure}
  \begin{subfigure}{0.49\textwidth}
      \centering
      \includegraphics[width=\textwidth]{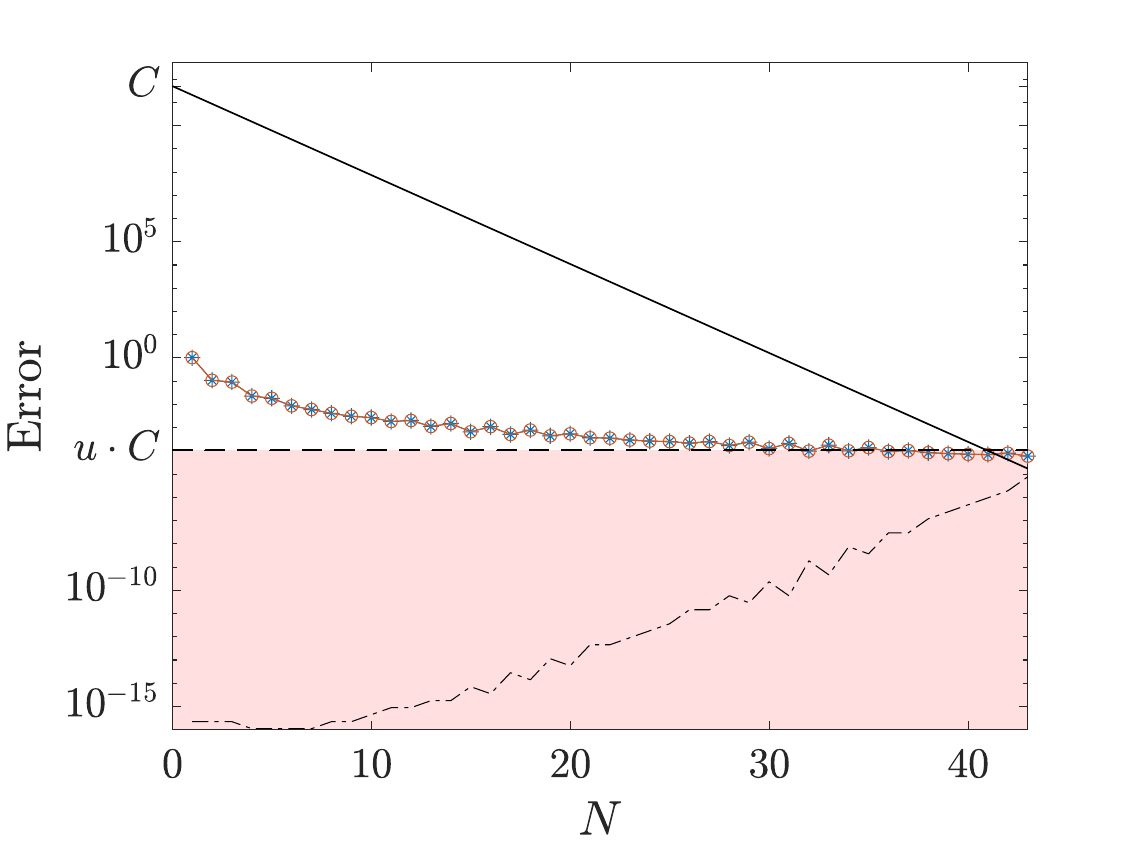}
      \caption{$F(x)=\abs{x+0.1}^{2.5}$, $C=5\e{11}$}
    \end{subfigure}      
      \caption{{\bf Cases when the extra error caused by the use of a
      monomial basis is no larger than $\norm{F-P_N}_{L^\infty(\Omega)}$}.
      The function $F$ is interpolated over $\Omega=[-1,1]$.
      The value of $C$ is chosen such that $\norm{F-P_N}_{\li([-1,1])}\leq
      C\rho_*^{-N}$ for $N=0,1,...,40$.  We highlight the region bounded
      above by $u\cdot C$ in pink.}
    \label{fig:rap_new2}
\end{figure}

\begin{figure}[h]
    \centering
  \begin{subfigure}{0.49\textwidth}
      \centering
      \includegraphics[width=\textwidth]{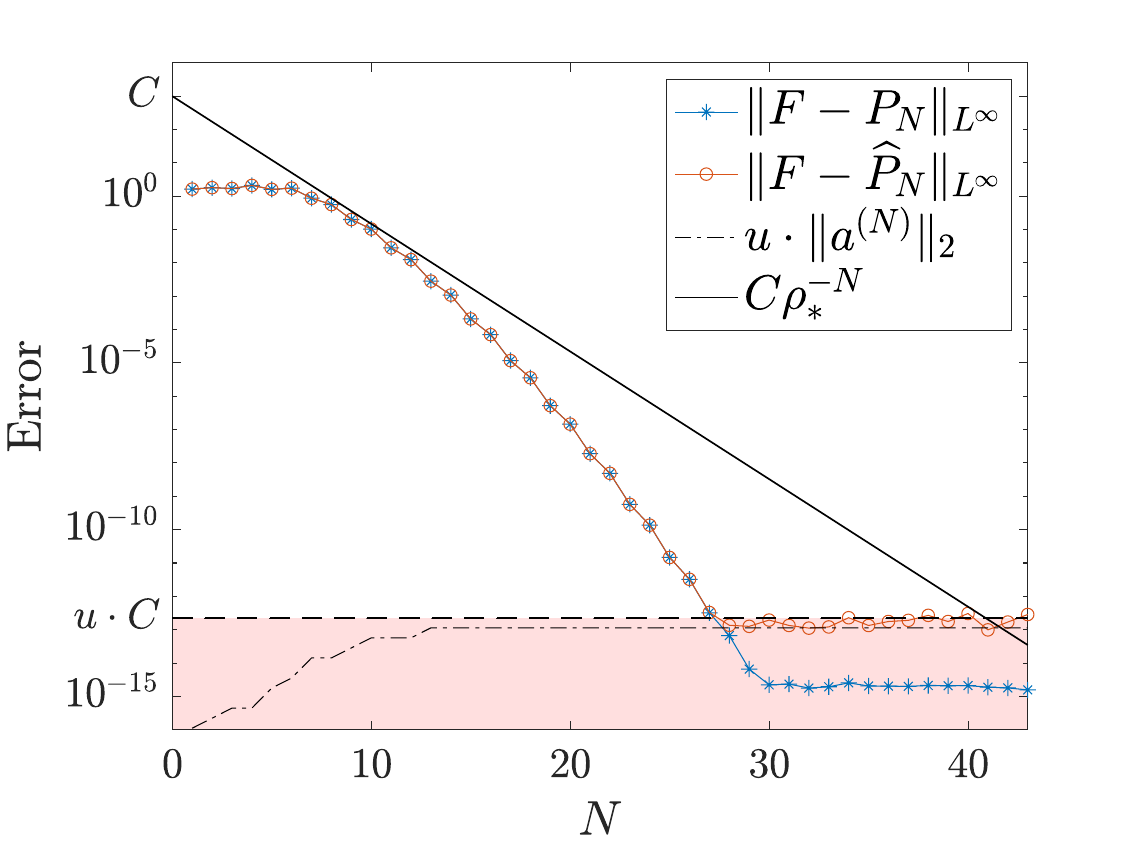}
      \caption{$F(x)=\cos(8x+1)$, $C=1\e{3}$}
    \end{subfigure}
    \begin{subfigure}{0.49\textwidth}
      \centering
      \includegraphics[width=\textwidth]{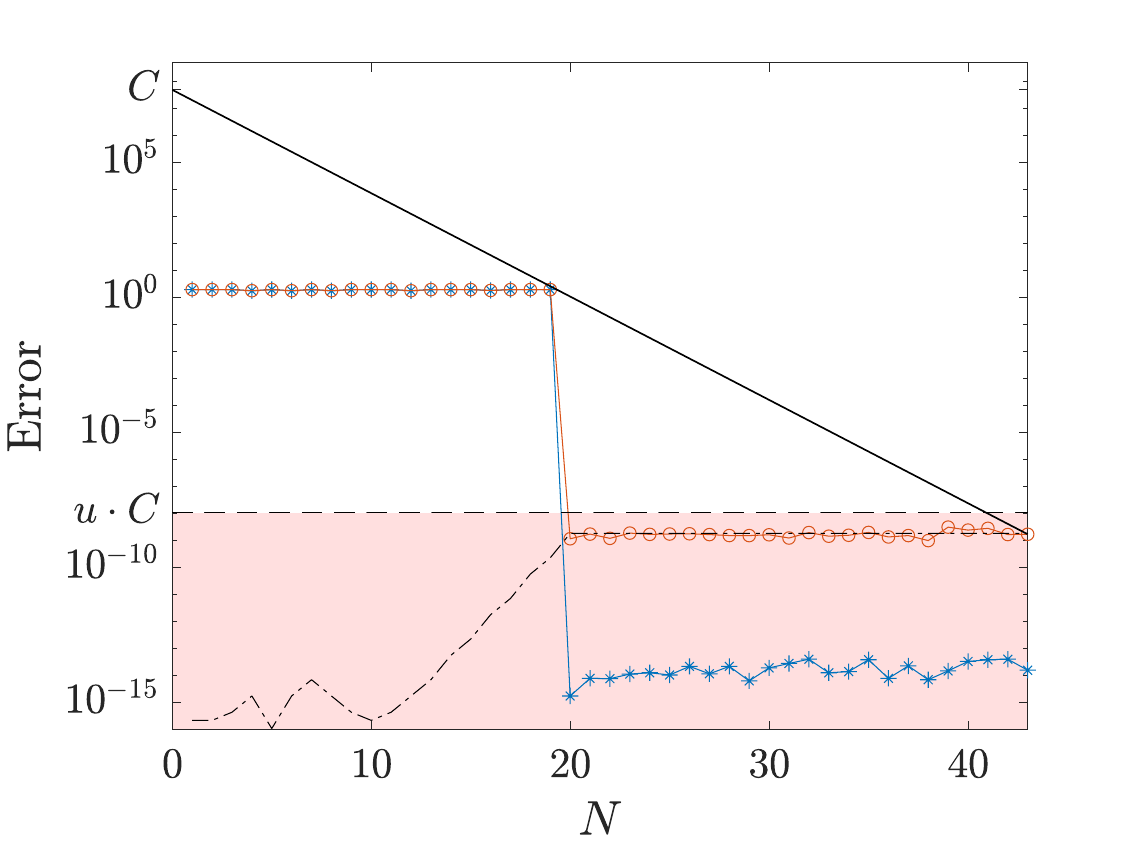}
      \caption{$F(x)=T_{20}(x)$, $C=5\e{7}$}
    \end{subfigure}
      \caption{{\bf Cases when convergence stagnates}. The function $F$
      is interpolated over $\Omega=[-1,1]$. 
      The value of $C$ is chosen such that $\norm{F-P_N}_{\li([-1,1])}\leq
      C\rho_*^{-N}$ for $N=0,1,...,40$. We highlight the region bounded
      above by $u\cdot C$ in pink.}
    \label{fig:rap_new3}
\end{figure}

\subsection{Practical use of a monomial basis for interpolation}
\label{sec:limit}

What are the restrictions on polynomial interpolation in the monomial basis?
Firstly, extremely high-order global interpolation is impossible in the
monomial basis, because the order $N$ must satisfy
$\norm{(V^{(N)})^{-1}}_2\lesssim \frac{1}{u}$ for our estimates to hold,
where $u$ denotes machine epsilon. In fact, even if this condition were not
required, there would still be no benefit in taking an order larger than
this threshold.  For most functions, if a polynomial of degree exceeding the
threshold is needed to resolve the function to machine precision, the error
caused by the use of a monomial basis typically dominates the true
polynomial interpolation error, which leads to stagnation of convergence.
This can be seen from the discussion in Section \ref{sec:good}.

On the other hand, piecewise polynomial interpolation in the monomial basis
over a partition of $\Omega$ can be carried out stably, provided that the
maximum order of approximation over each subelement of $\Omega$ is maintained below the
threshold $\argmax_N \norm{(V^{(N)})^{-1}}_2\lesssim \frac{1}{u}$, and that
the size of~$u\cdot \norm{a^{(N)}}_2\approx u\cdot \norm{\hat a^{(N)}}_2$ is
kept below the size of the polynomial interpolation error, where $a^{(N)}$
and $\hat a^{(N)}$ denote the exact and the computed monomial coefficient
vectors, respectively.  As demonstrated in Section \ref{sec:good}, the
latter requirement is often satisfied automatically, and when it is not,
adding an extra level of subdivision almost always resolves the issue.  In
addition, the extra error caused by the use of a monomial basis can always
be estimated promptly during computation, using the value of $u
\cdot\norm{\hat a^{(N)}}_2$.

Based on the discussion above, we summarize a proper method of using the
monomial basis for interpolation, as follows. For simplicity, we use the same
order of approximation over each subelement, and denote the order by $N$.
This value of $N$ needs to be smaller than the threshold $\argmax_N
\norm{(V^{(N)})^{-1}}_2\lesssim \frac{1}{u}$.  Given a
function $F:\Omega\to\C$ and an error tolerance $\varepsilon$, we subdivide
the domain $\Omega$ until $F$ can be approximated by a polynomial of degree
less than or equal to $N$ over each subelement to within an error of $\varepsilon$. Finally,
we subdivide the subelements further until the 2-norm of the monomial coefficient
vector $\hat a^{(N)}$ is less than $\varepsilon/u$ over each subelement.

Since the convergence rate of piecewise polynomial approximation is
$\O(h^{N+1})$, where $h$ and $N$ denote the maximum diameter and minimum
order of approximation over all subelements, respectively, and since the
aforementioned threshold is generally not small (e.g., the threshold is
approximately equal to $43$ when $\Omega=[-1,1]$), piecewise polynomial
interpolation in the monomial basis converges rapidly so long as we set the
value of $N$ to be large enough.  Therefore, there is no need to avoid the
use of a monomial basis when it offers an advantage over other bases.

\begin{remark}
It takes $\O(N^3)$ operations to solve a Vandermonde system of size $N\times
N$ by a standard backward stable solver.  Since the order of approximation
$N$ is almost always not large, the solution to the Vandermonde system can
be computed both rapidly, as well as accurately in the sense that
$\gamma_N$ is small, using highly optimized linear algebra libraries, e.g.,
LAPACK.  There also exist specialized algorithms that solve Vandermonde
systems in $\O(N^2)$ operations, e.g., the Bj\"orck-Pereyra algorithm
\cite{bjorck} and the Parker-Traub algorithm~\cite{traub}.  We note that the
Parker-Traub algorithm is known to be backward stable \cite{traub}, while
the stability of the Bj\"orck-Pereyra algorithm is known only under very
restrictive conditions \cite{higham}.
\end{remark}

\section{Numerical experiments}

  \label{sec:exp}

\subsection{Interpolation over an interval}
  \label{sec:mono_int}
In this section, we consider polynomial interpolation in the monomial basis
over an interval $\Omega=[a,b]\subset\R$. We compute the polynomial
interpolants using Chebyshev points on the interval $[a,b]$, since their
associated Lebesgue constant, $\Lambda_N$, is bounded by
$\frac{2}{\pi}\log(N+1)+1$, where $N$ is the order of approximation
\cite{zeller}.

In Figure~\ref{fig:cond_veri}, we report the values of
$\norm{(V^{(N)})^{-1}}_2$ and its upper bound $\rho_*^N\Lambda_N$ (see
Theorem \ref{thm:cond2}), for the domains $\Omega=[-1,1]$ and
$\Omega=[0,1]$. One can see that the upper bound is close to being tight.  Note
that when $\Omega=[-1,1]$, we have that $\rho_*=1+\sqrt{2}$ and
$\norm{(V^{(N)})^{-1}}_2\leq \frac{1}{u}$ for $N\leq 43$; when
$\Omega=[0,1]$, we have that $\rho_*=3+2\sqrt{2}$ and
$\norm{(V^{(N)})^{-1}}_2\leq \frac{1}{u}$ for $N\leq 22$.  

In Figure \ref{fig:888}, we interpolate several functions over
$\Omega=[-1,1]$. We denote the interpolant constructed using the monomial
basis by $\hat P_N$, and estimate the true polynomial interpolant $P_N$ using the Lagrange
basis evaluated by the Barycentric interpolation formula. 
In addition to the estimated values of
$\norm{F-P_N}_{\li([-1,1])}$ and $\norm{F-\hat P_N}_{\li([-1,1])}$, we plot
three additional curves in each figure: the estimated values of $u\cdot
\norm{a^{(N)}}_{2}$ based on inequality \eqref{for:mm999}, the upper 
bound $C\rho_*^{-N}$ for $\norm{F-P_N}_{\li([-1,1])}$, where
$C$ was chosen so that $\norm{F-P_N}_{\li([-1,1])}\leq
C\rho_*^{-N}$ for $N=0,1,...,40$, and the approximate upper bound
$u\cdot C$ for $u\cdot \norm{a^{(N)}}_{2}$. In
Figure \ref{fig:999}, we provide similar experiments for the case where
$\Omega=[0,1]$, this time choosing $C$ so that
$\norm{F-P_N}_{\li([0,1])}\leq C\rho_*^{-N}$ for $N=0,1,...,20$.  Based on
these experimental results, one can observe that the convergence stagnates
after the monomial approximation error $\norm{F-\hat P_N}_{\li(\Omega)}$
reaches $\mach \cdot \norm{a^{(N)}}_2$,
which implies that inequality \eqref{for:apri} is sharp. Moreover, the
values of $\mach \cdot \norm{a^{(N)}}_2$ are always less than the upper
bound $u\cdot C$, which agrees with our analysis in Section \ref{sec:good}. 

What happens when the order of approximation $N$ exceeds the threshold,
i.e., when $\norm{(V^{(N)})^{-1}}_2\geq \frac{1}{u}$?
In Figure \ref{fig:888_exceed}, we use two different solvers to
solve the Vandermonde system with Chebyshev interpolation points
over the interval $[-1,1]$: LU factorization with partial pivoting, and the
truncated singular value decomposition (TSVD) with a truncation cutoff of
$10^{-14}$. Since backward stability no longer guarantees that $\norm{\hat
a^{(N)}}_2\approx \norm{a^{(N)}}_2$ when $N$ is greater than the threshold,
the upper bound for the monomial approximation error \eqref{for:priori1}
becomes invalid, and our previous analysis is no longer applicable. As shown
in the experiments, the approximation error does not improve when $N>43$.
When the system is solved using LU decomposition, the error fluctuates. We
also note that the error could  grow exponentially after $N$ exceeds the
threshold, depending on the particular choice of the backward stable solver.
When the TSVD is used, the error is stable.  This can be explained by
Theorem 2.1 in \cite{mhz}, which provides an analogous bound to
\eqref{for:priori1} for the case of the TSVD.

\begin{figure}[h]
    \centering
  \begin{subfigure}{0.49\textwidth}
      \centering
      \includegraphics[width=\textwidth]{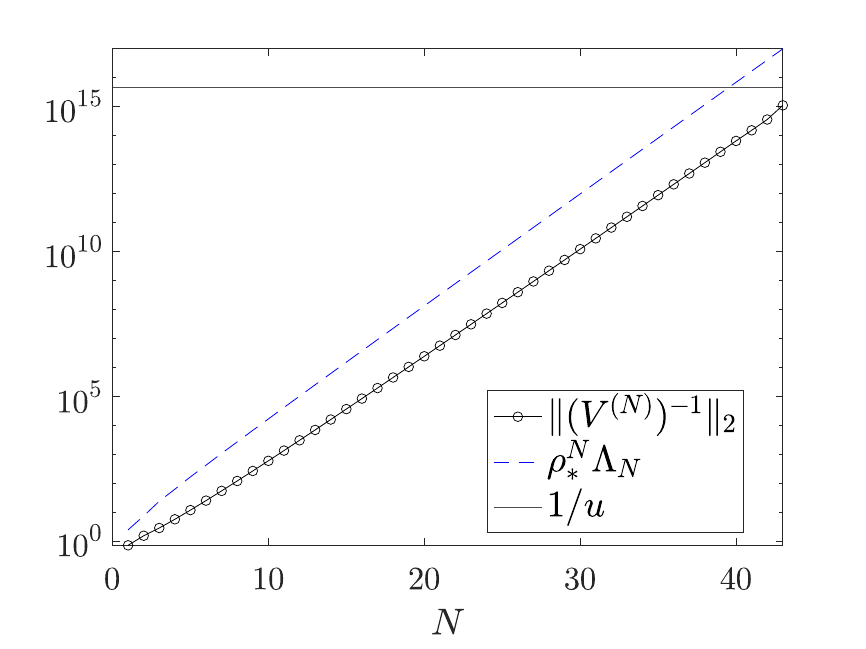}
      \caption{$\Omega=[-1,1]$}
      \label{fig:cond_veri:a}
    \end{subfigure}
  \begin{subfigure}{0.49\textwidth}
      \centering
      \includegraphics[width=\textwidth]{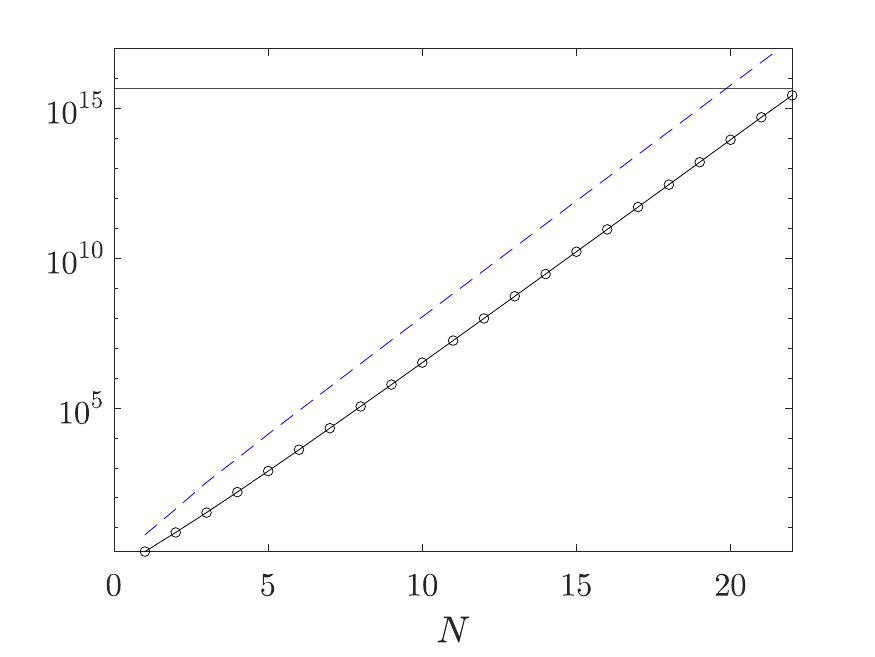}
      \caption{$\Omega=[0,1]$}
    \end{subfigure}
      \caption{{\bf The 2-norm of $(V^{(N)})^{-1}$ with Chebyshev
      interpolation points over an interval $\Omega$, and its upper bound,
      for different orders of approximation}. We note that
      $\rho_*=1+\sqrt{2}$ when $\Omega=[-1,1]$, and $\rho_*=3+2\sqrt{2}$
      when $\Omega=[0,1]$. }
      \label{fig:cond_veri}
\end{figure}

\begin{figure}
    \centering
    \begin{subfigure}{0.49\textwidth}
      \centering
      \includegraphics[width=\textwidth]{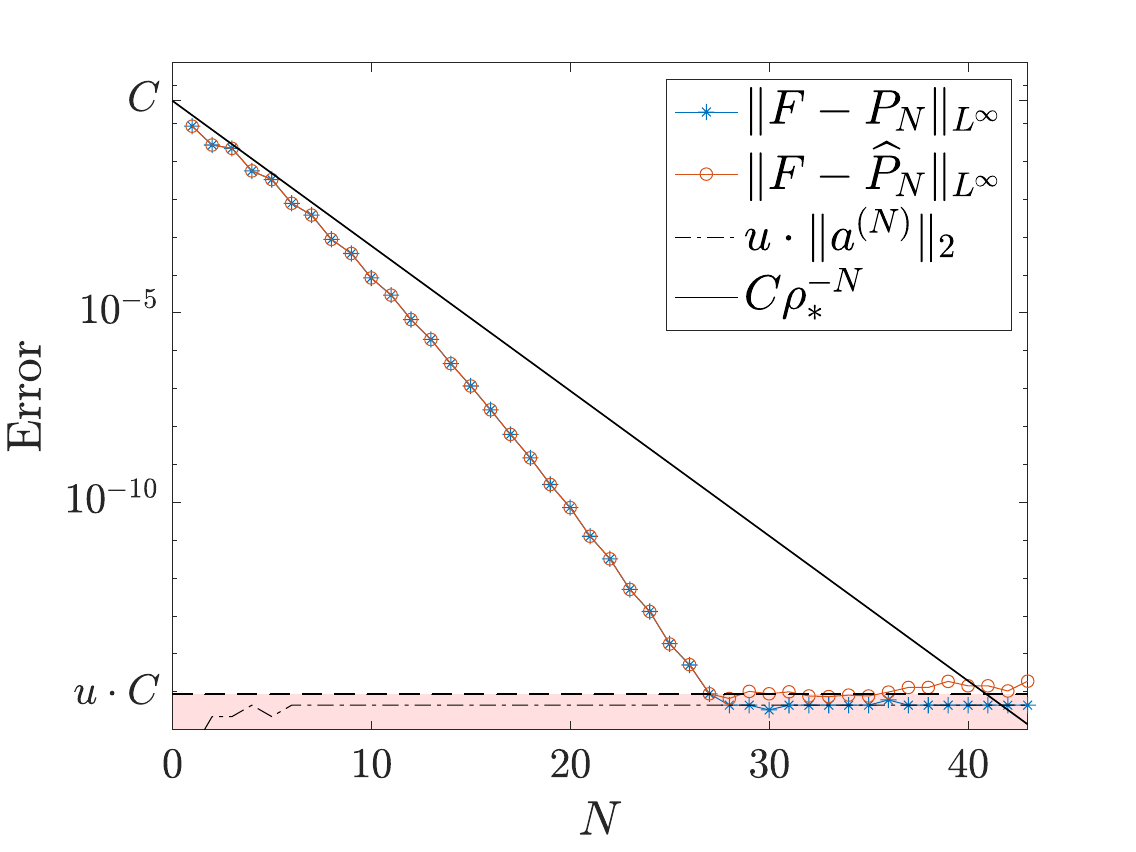}
      \caption{$F(x)=e^{-2(x+0.1)^2}$, $C=4\e{0}$}
    \end{subfigure}
    \begin{subfigure}{0.49\textwidth}
      \centering
      \includegraphics[width=\textwidth]{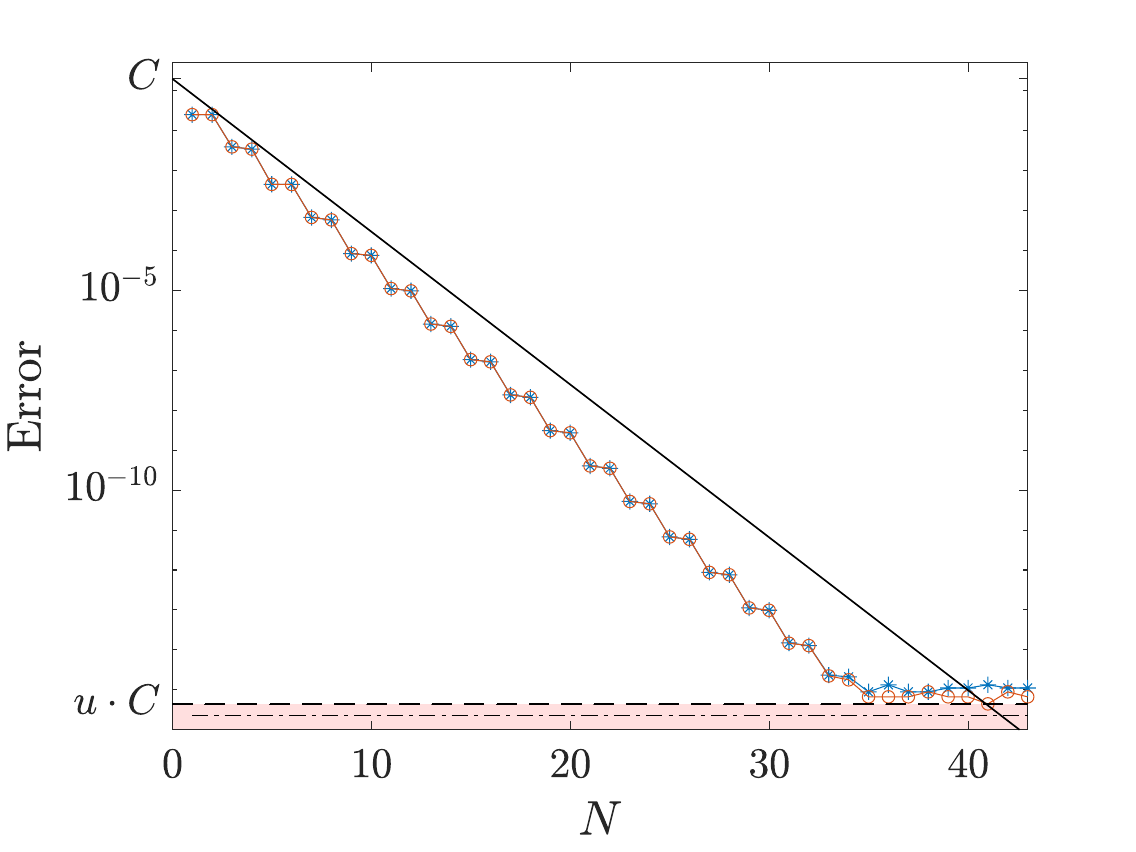}
      \caption{$F(x)=\tan(x)$, $C=2\e{0}$}
    \end{subfigure}
    \begin{subfigure}{0.49\textwidth}
      \centering
      \includegraphics[width=\textwidth]{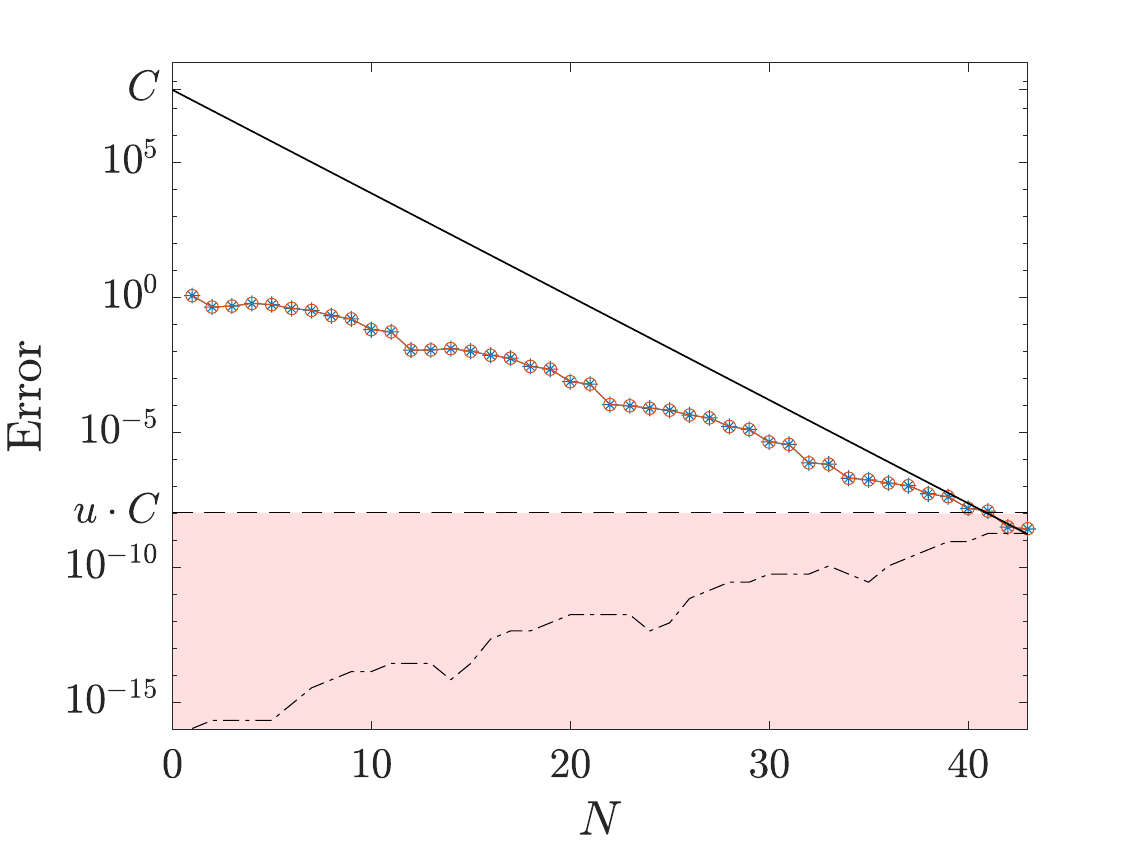}
      \caption{$F(x)=\cos(3x^8+1)$, $C=5\e{7}$}
    \end{subfigure}
    \begin{subfigure}{0.49\textwidth}
      \centering
      \includegraphics[width=\textwidth]{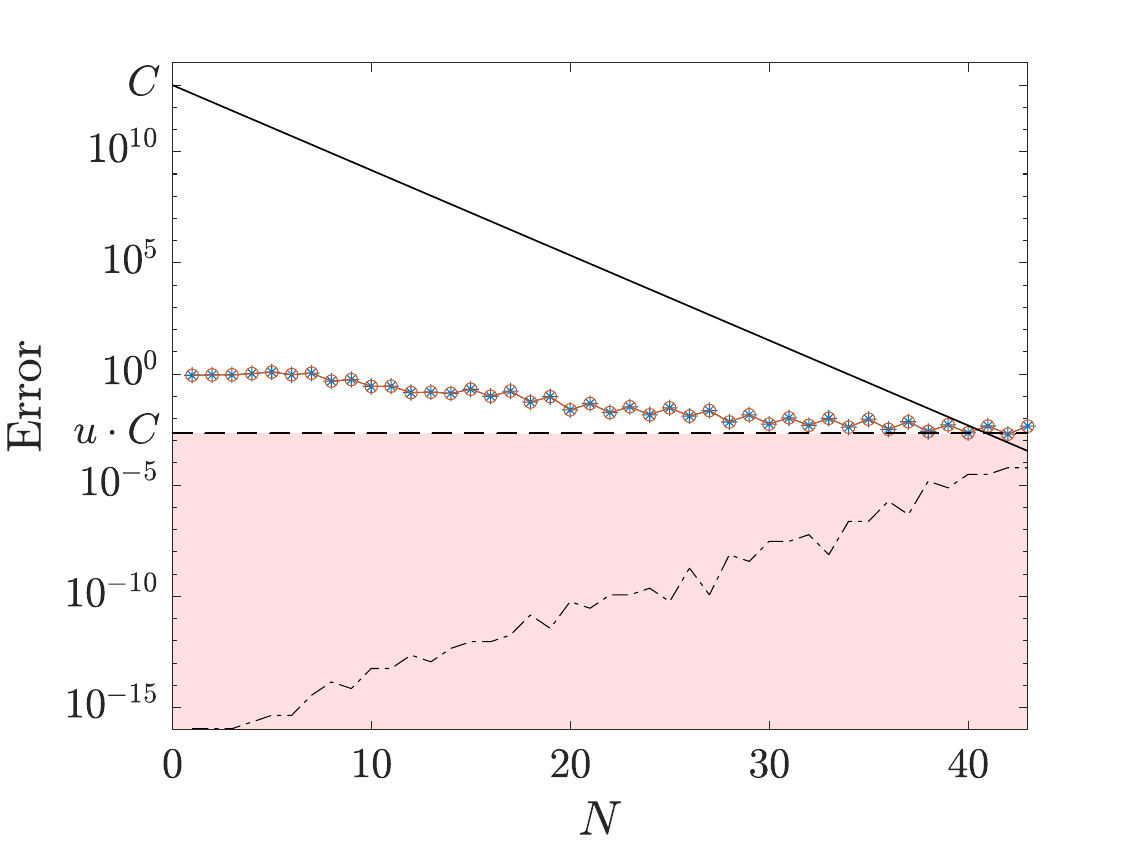}
      \caption{$F(x)=\abs{\sin(5x)}^3$, $C=1\e{13}$}
    \end{subfigure}
  \begin{subfigure}{0.49\textwidth}
      \centering
      \includegraphics[width=\textwidth]{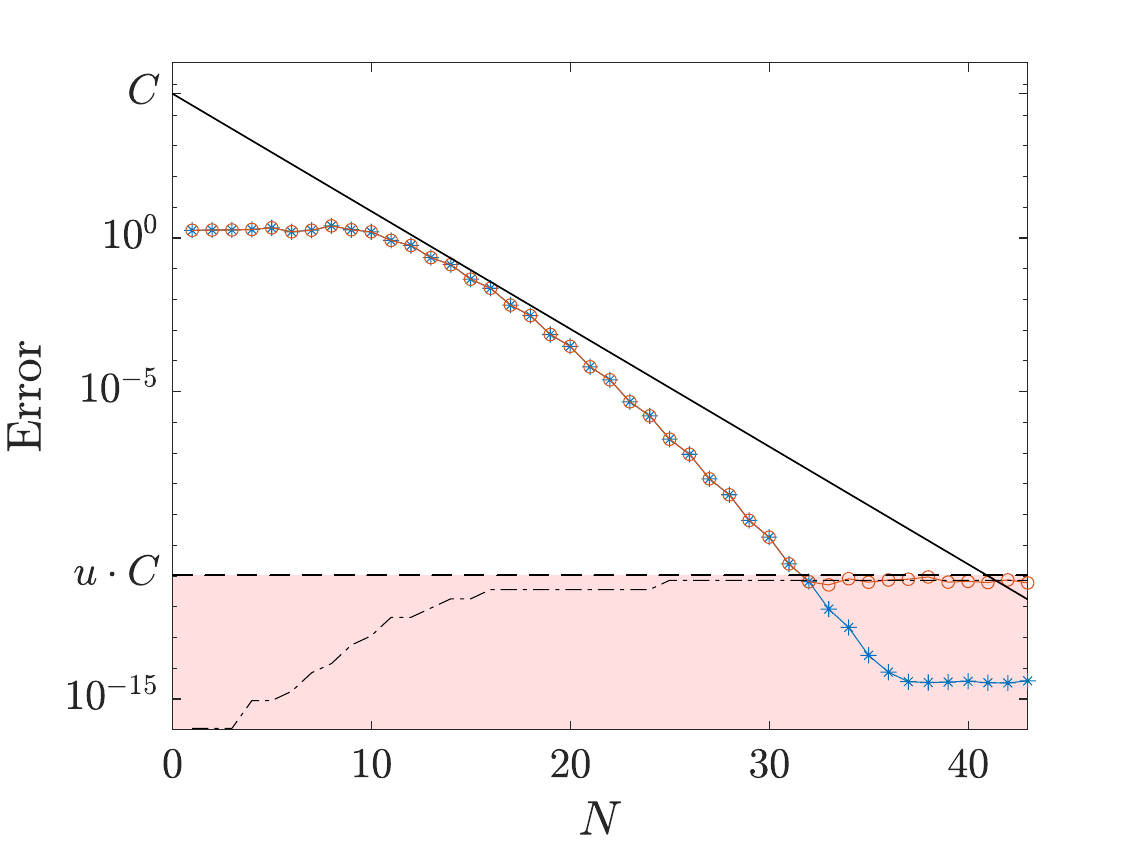}
      \caption{$F(x)=\cos(12x+1)$, $C=5\e{4}$}
    \end{subfigure}
    \begin{subfigure}{0.49\textwidth}
      \centering
      \includegraphics[width=\textwidth]{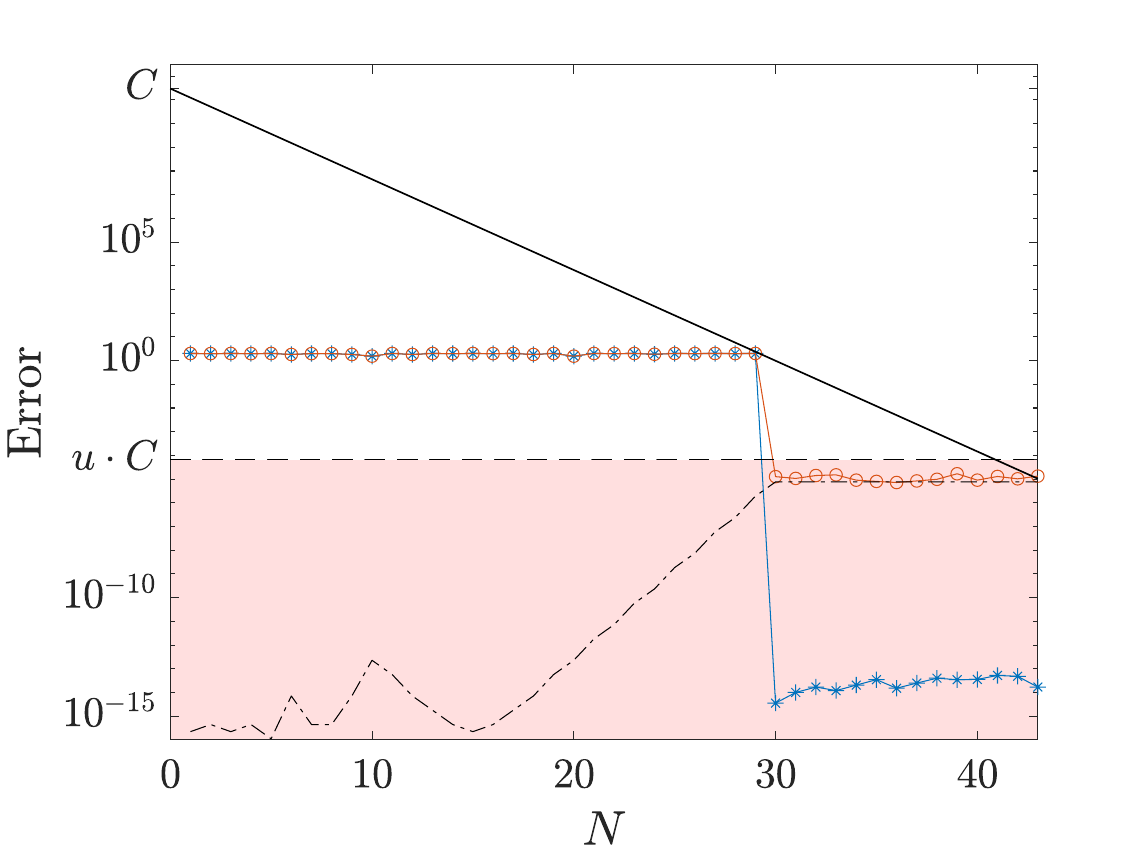}
      \caption{$F(x)=T_{30}(x)$, $C=3\e{11}$}
    \end{subfigure}
      \caption{{\bf Polynomial interpolation in the monomial basis over
      $\Omega=[-1,1]$}.  
      The constant $\rho_*$ equals $1+\sqrt{2}$. The
      value of $C$ is chosen such that $\norm{F-P_N}_{\li([-1,1])}\leq
      C\rho_*^{-N}$ for $N=0,1,...,40$. We highlight the region bounded
      above by $u\cdot C$ in pink.}
  \label{fig:888}
\end{figure}

\begin{figure}
    \centering
    \begin{subfigure}{0.49\textwidth}
      \centering
      \includegraphics[width=\textwidth]{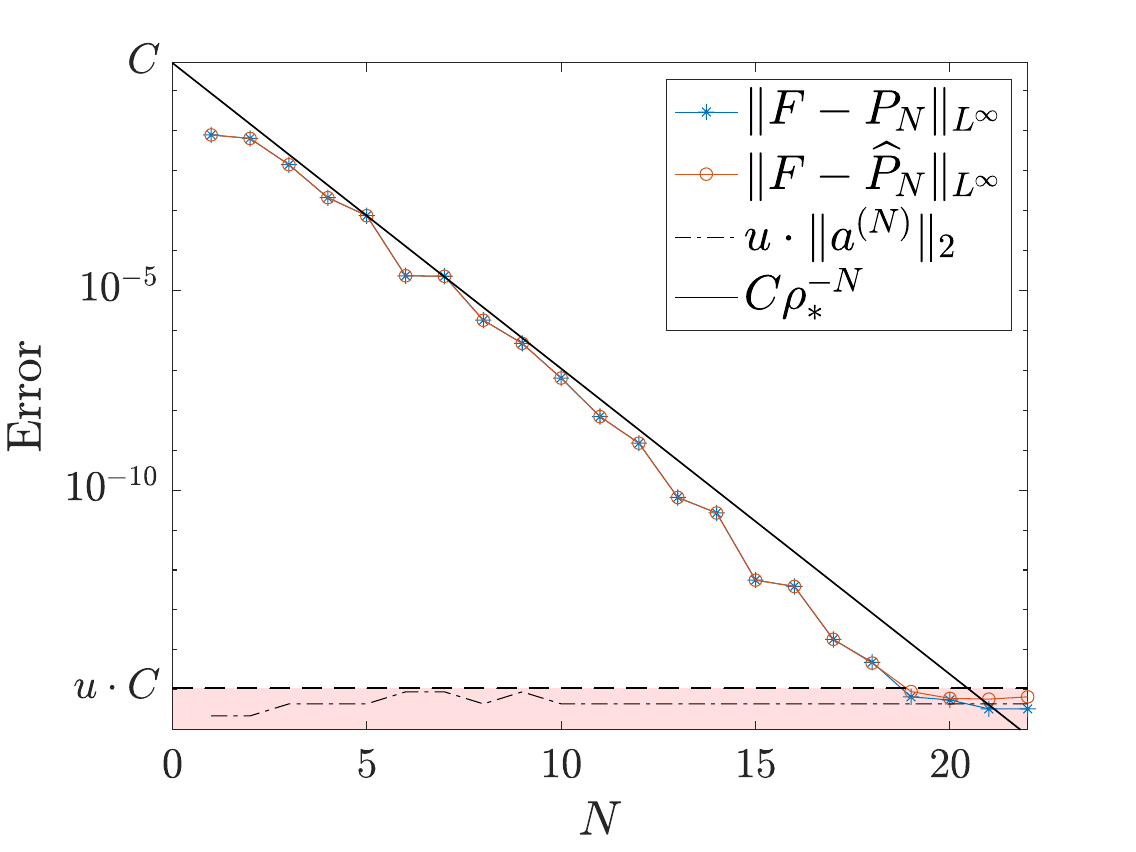}
      \caption{$F(x)=e^{-2(x+0.1)^2}$, $C=5\e{0}$}
    \end{subfigure}
    \begin{subfigure}{0.49\textwidth}
      \centering
      \includegraphics[width=\textwidth]{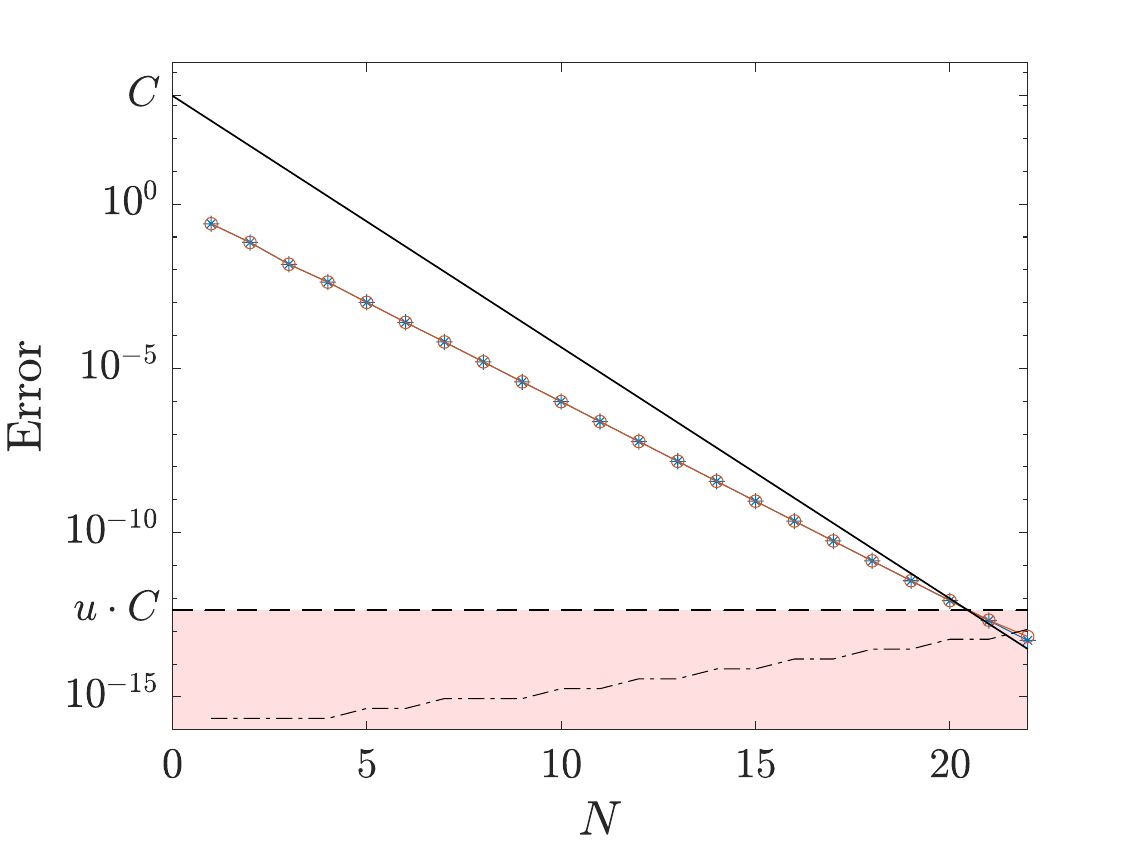}
      \caption{$F(x)=\tan(x)$, $C=2\e{3}$}
    \end{subfigure}
    \begin{subfigure}{0.49\textwidth}
      \centering
      \includegraphics[width=\textwidth]{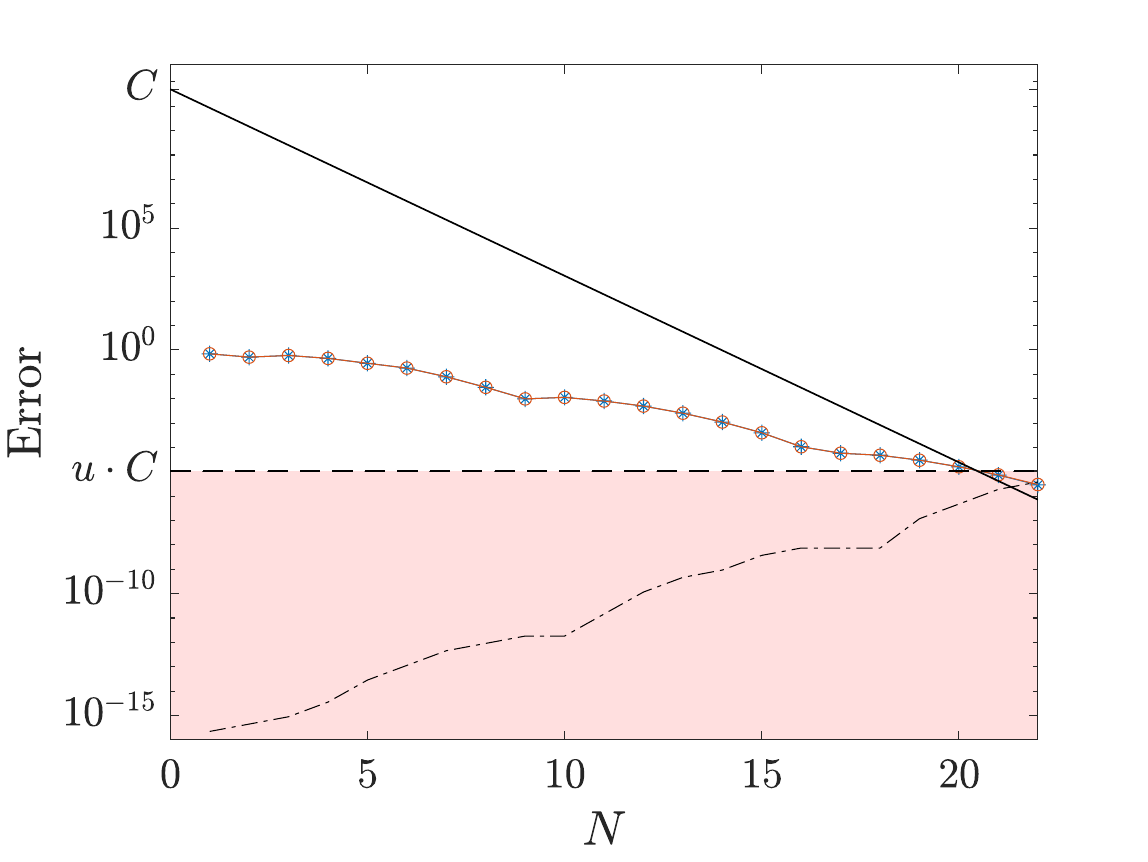}
      \caption{$F(x)=\cos(3x^8+1)$, $C=5\e{10}$}
    \end{subfigure}
    \begin{subfigure}{0.49\textwidth}
      \centering
      \includegraphics[width=\textwidth]{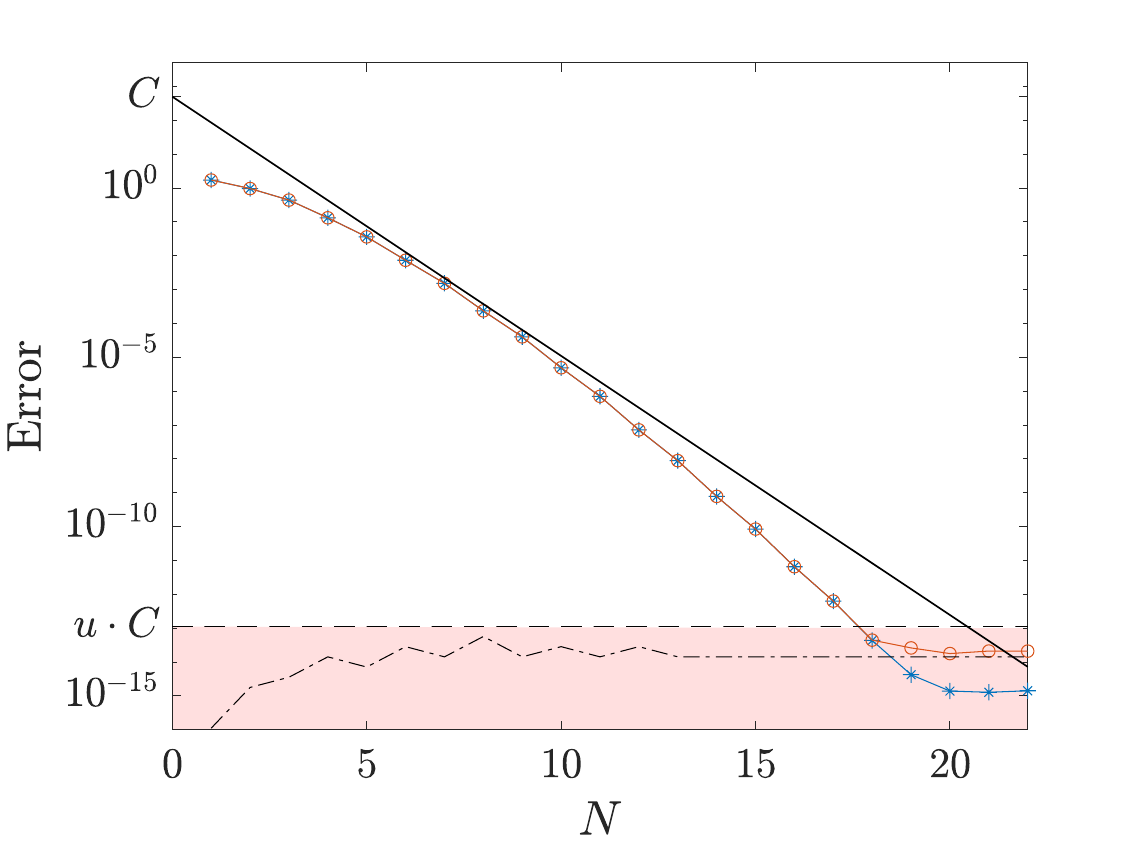}
      \caption{$F(x)=\sin(6x+1)$, $C=5\e{2}$}
    \end{subfigure}
      \caption{{\bf Polynomial interpolation in the monomial basis over
      $\Omega=[0,1]$}. 
      The constant $\rho_*$ equals $3+2\sqrt{2}$. The
      value of $C$ is chosen such that $\norm{F-P_N}_{\li([0,1])}\leq
      C\rho_*^{-N}$ for $N=0,1,...,20$. We highlight the region bounded
      above by $u\cdot C$ in pink.}
  \label{fig:999}
\end{figure}

\begin{figure}
    \centering
    \begin{subfigure}{0.49\textwidth}
      \centering
      \includegraphics[width=\textwidth]{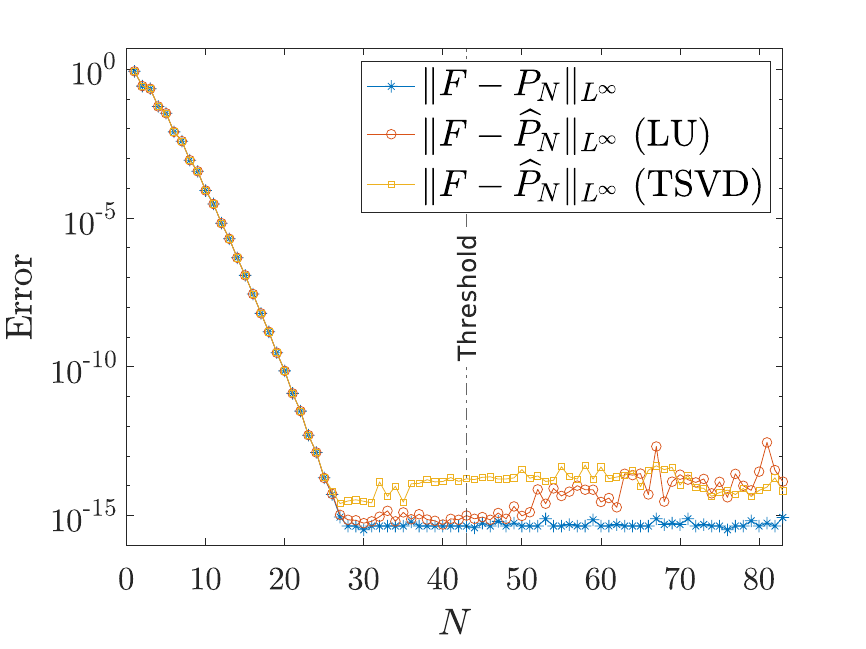}
      \caption{$F(x)=e^{-2(x+0.1)^2}$}
    \end{subfigure}
  \begin{subfigure}{0.49\textwidth}
      \centering
      \includegraphics[width=\textwidth]{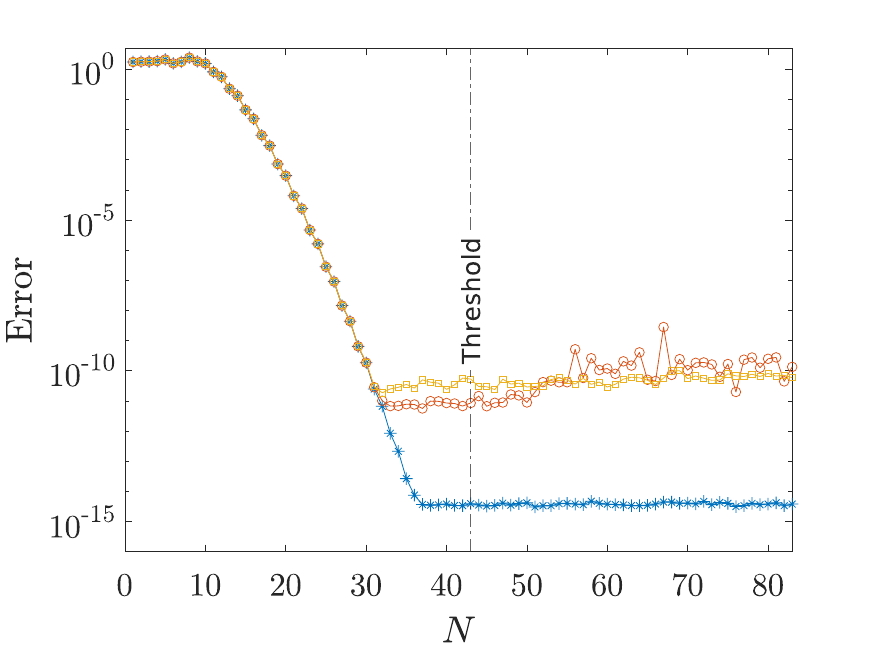}
      \caption{$F(x)=\cos(12x+1)$}
    \end{subfigure}
    \begin{subfigure}{0.49\textwidth}
      \centering
      \includegraphics[width=\textwidth]{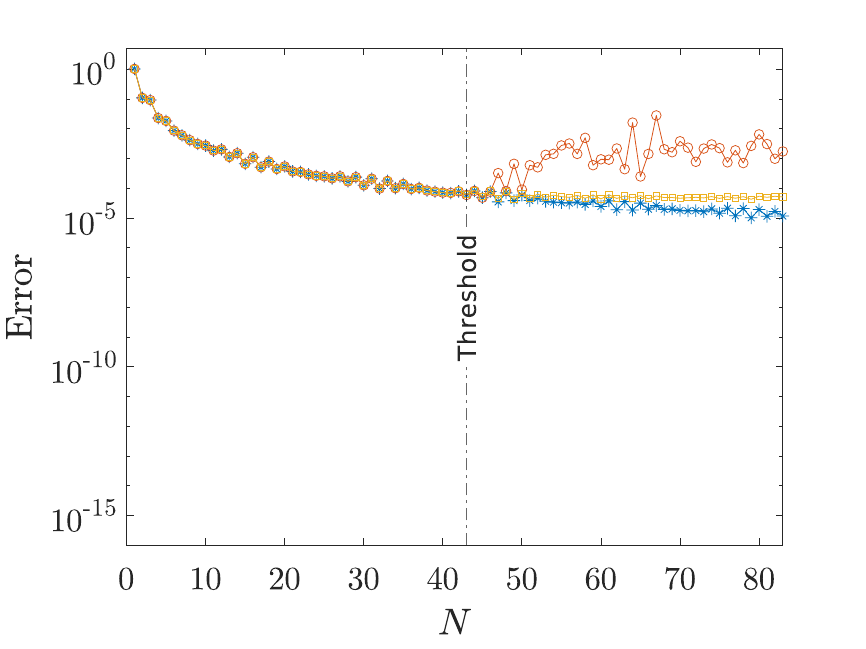}
      \caption{$F(x)=\abs{x+0.1}^{2.5}$}
    \end{subfigure}
    \begin{subfigure}{0.49\textwidth}
      \centering
      \includegraphics[width=\textwidth]{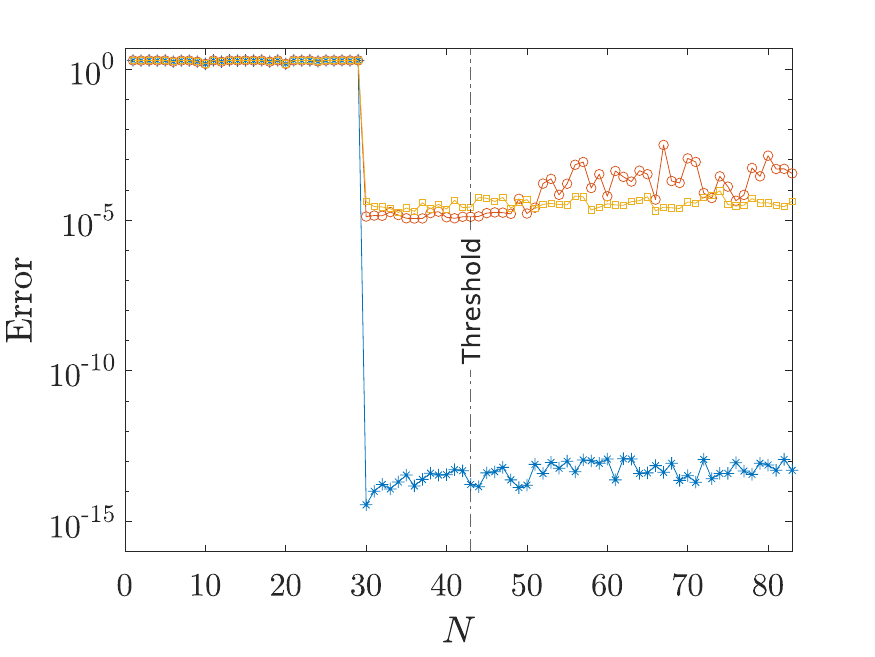}
      \caption{$F(x)=T_{30}(x)$}
    \end{subfigure}
      \caption{{\bf Polynomial interpolation in the monomial basis
      over $\Omega=[-1,1]$, when the order of approximation exceeds the
      threshold}. The vertical line with the label `Threshold' denotes the
      order of approximation $\min \bigl(\{N:\norm{(V^{(N)})^{-1}}_2\geq
      \frac{1}{u}\}\bigr)$, beyond which our theory is no longer applicable.}
      \label{fig:888_exceed}
\end{figure}

\subsection{Approximation over more general regions in the complex plane}
  \label{sec:mono_curv}

In this section, we consider polynomial interpolation and approximation in
the monomial basis over a more general simply connected compact set
$\Omega\subset\C$.  When $\partial \Omega$ is an analytic Jordan curve, it
is shown in \cite{fejar1} that the Lebesgue constant of the Fej\'er points
$\{\Phi^{-1}\bigl(\exp(i (\frac{2\pi j}{N+1}+a))\bigr)\}_{j=0,1,\dots, N}$
grows logarithmically, where $\Phi^{-1}$ is defined in Remark \ref{rem:conform}, and
where $a\in \R$ is arbitrary.  When $\Omega$ is a Jordan arc, it is shown in
\cite{afejar} that the Lebesgue constant of the Fej\'er points, with some
adjustment to ensure adequate spacing between interpolation points, also
exhibits logarithmic growth.

In the remainder of this section, we provide several numerical experiments
involving the following three domains: an ellipse with a major radius of
1 and a minor radius of $0.2$, centered at the origin (see Figure
\ref{fig:erho_elli}); a parabola parameterized by $g:[-1,1]\to\C$, where
$g(t):=t+0.4i(t^2-1)$ (see Figure \ref{fig:erho_para}); and a square with
side length $\sqrt{2}$, centered at the origin.  We note that the estimated
constants $\rho_*$ (see Definition \ref{def:bern}) for the three domains are
approximately $2$, $2.6$, and $1.3$, respectively (see Figures
\ref{fig:erho_elli}, \ref{fig:erho_para}, and \ref{fig:erho_box}).

In the first domain (the ellipse), the conformal mapping $\Phi^{-1}$ is
specified by the formula $\Phi^{-1}(z)=\frac{3}{5}\cdot
z+\frac{2}{5}\cdot\frac{1}{z}$, and we use the Fej\'er points for
interpolation.  Since the Fej\'er points for the second and the third
domains are not known in closed form, we employ different approaches tailored
to each case.
In the second domain (the parabola), we set the interpolation
points to be $\{g(t_j)\}_{j=0,1,\dots,N}$, where $\{t_j\}_{j=0,1,\dots,N}$
is the set of $(N+1)$ Chebyshev points on the interval $[-1,1]$. We find
that the Lebesgue constants associated with these points are all
less than 10 when $\kappa(V^{(N)})\lesssim \frac{1}{u}$. In the third domain
(the square), we instead solve a least-squares fitting problem with sampling
points chosen to be the $2(N+1)$ Chebyshev points of the first kind along
each side of the square, where $N$ denotes the order of approximation (see
Remark \ref{rem:leastsq_mono}).

In Figure \ref{fig:cond_general}, we report the values of
$\norm{(V^{(N)})^{-1}}_2$ and its upper bound $\rho_*^N\Lambda_N$ (see
Theorem \ref{thm:cond2}) for the ellipse domain and the parabola domain,
where the value of $\rho_*$ for each domain is estimated based on the plots
in Figure \ref{fig:erho}. Again, one can see that our upper bound is very
tight.

\begin{figure}
    \centering
  \begin{subfigure}{0.49\textwidth}
      \centering
      \includegraphics[width=\textwidth]{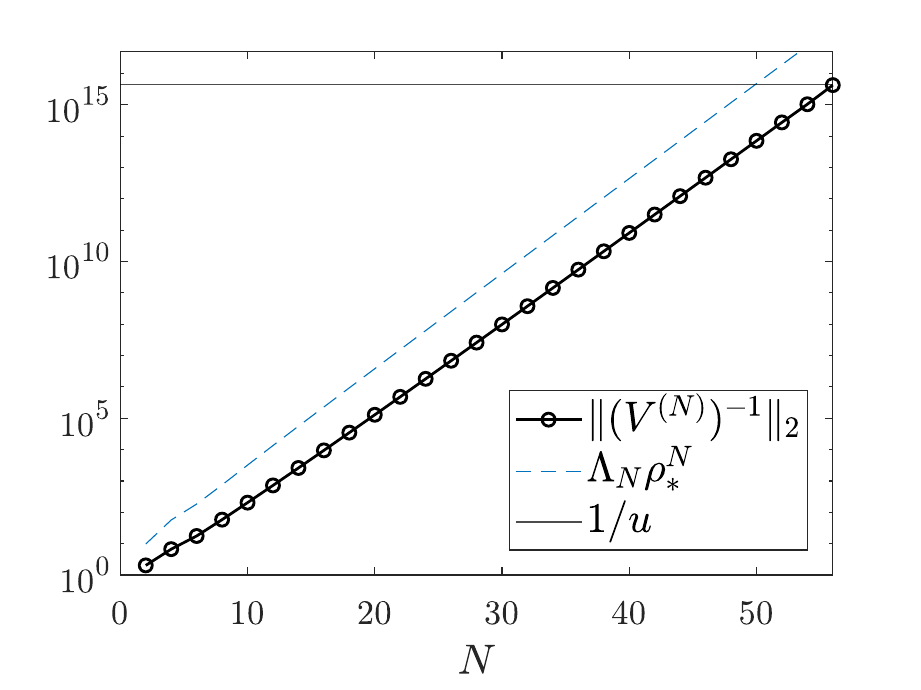}
      \caption{The ellipse, $\rho_*\approx 2$}
    \end{subfigure}
    \begin{subfigure}{0.49\textwidth}
      \centering
      \includegraphics[width=\textwidth]{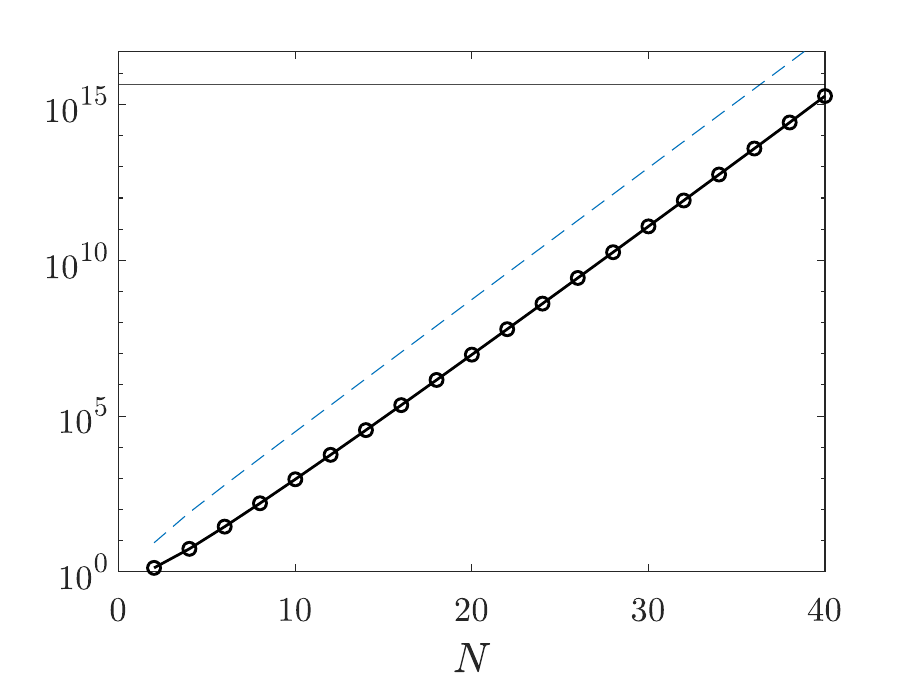}
      \caption{The parabola, $\rho_*\approx 2.6$}
    \end{subfigure}
\caption{{\bf The 2-norm of $(V^{(N)})^{-1}$ and its upper bound}.  }
  \label{fig:cond_general}
\end{figure}

In Figures \ref{fig:elli_exp}, \ref{fig:para_exp}, and \ref{fig:box_exp}, we
report the estimated values of $\norm{F-P_N}_{\li(\Omega)}$, $\norm{F-\hat
P_N}_{\li(\Omega)}$, and  $u\cdot \norm{a^{(N)}}_{2}$, over each of these
three domains.  We estimate the exact polynomial interpolant $P_N$  by
expressing $P_N$ in a discrete orthogonal polynomial basis (which is
well-conditioned), computed using the Vandermonde with Arnoldi
orthogonalization procedure \cite{arnoldi}.  In addition, we report the the
upper bound $C\rho_*^{-N}$ for $\norm{F-P_N}_{\li(\Omega)}$, where $C$ is
chosen so that $\norm{F-P_N}_{\li(\Omega)}\leq C\rho_*^{-N}$ for a range of
$N$ indicated in the figure caption, and the upper bound $u\cdot C$ for
$u\cdot \norm{a^{(N)}}_{2}$. These results demonstrate that the observations
made at the near end of Section \ref{sec:mono_int} are indeed applicable to more
general domains in the complex plane, and to polynomial approximation using
least-squares fitting with monomials.

\begin{figure}
    \centering
  \begin{subfigure}{0.49\textwidth}
      \centering
      \includegraphics[width=\textwidth]{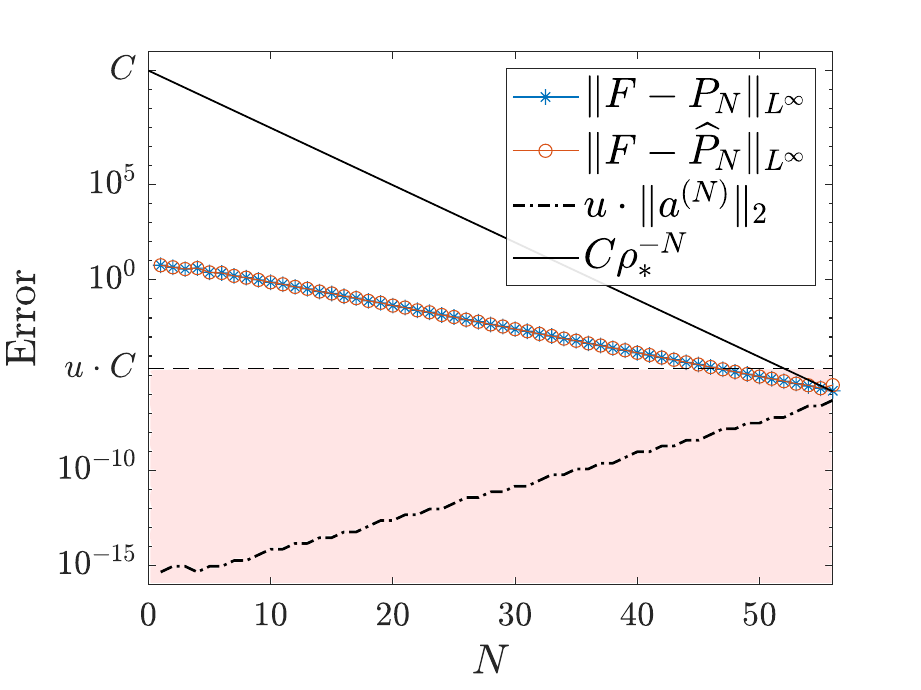}
      \caption{$F(z)=\frac{1}{z-1.1}$, $C=1\e{11}$}
    \end{subfigure}
    \begin{subfigure}{0.49\textwidth}
      \centering
      \includegraphics[width=\textwidth]{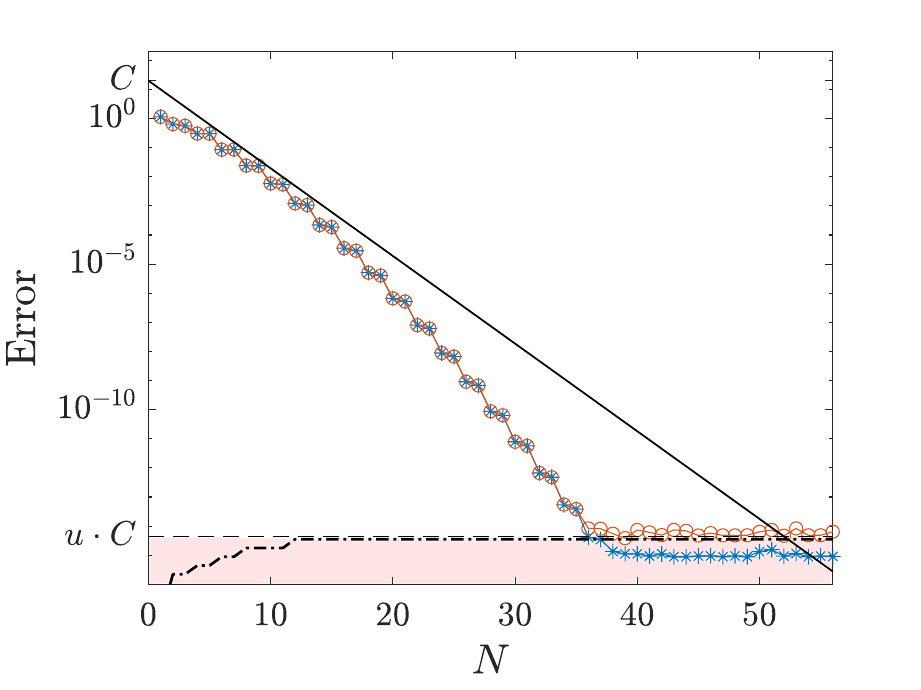}
      \caption{$F(z)=e^{-4z^2}$, $C=3\e{4}$}
    \end{subfigure}
    \begin{subfigure}{0.49\textwidth}
      \centering
      \includegraphics[width=\textwidth]{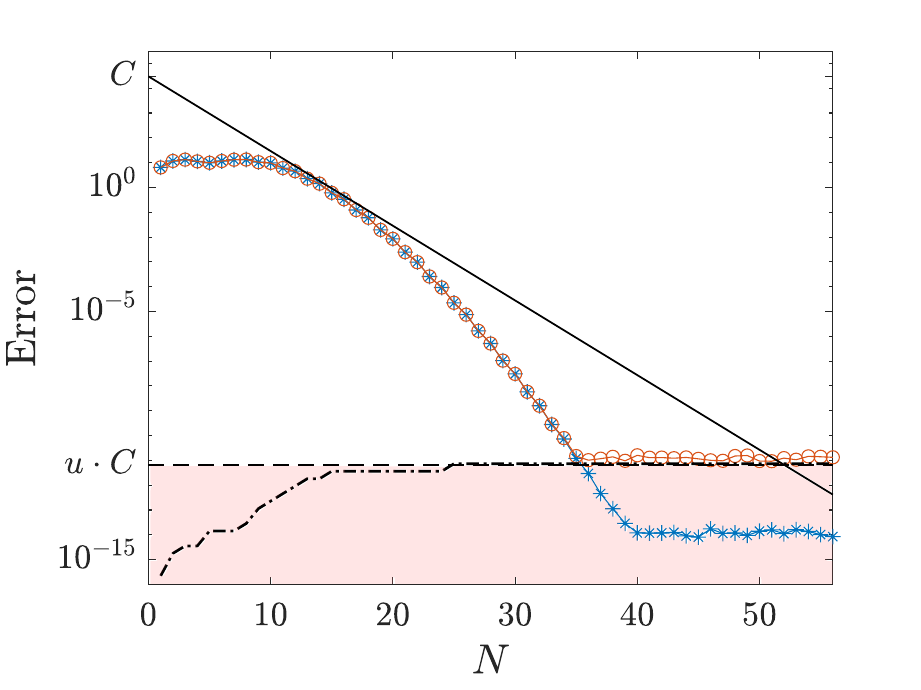}
      \caption{$F(z)=\cos(12z+1)$, $C=2\e{1}$}
    \end{subfigure}
    \begin{subfigure}{0.49\textwidth}
      \centering
      \includegraphics[width=\textwidth]{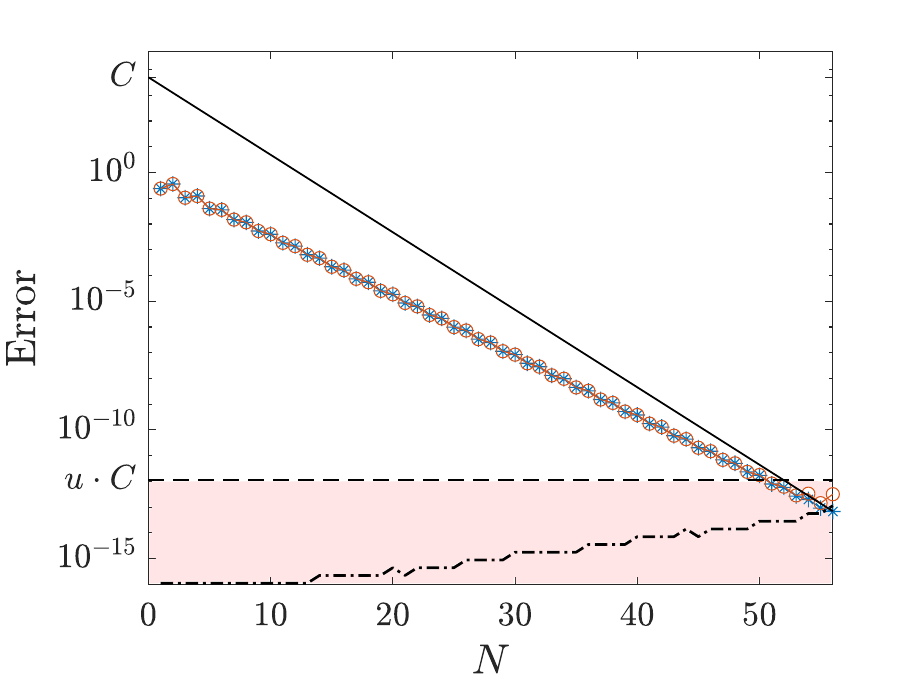}
      \caption{$F(z)=\tan(\tan(z)/2)$, $C=5\e{3}$}
    \end{subfigure}
\caption{{\bf Polynomial interpolation in the monomial basis over the
ellipse~$\Omega$}. We estimate $P_N$ by expressing it in a discrete
orthogonal polynomial basis, computed using the Vandermonde with Arnoldi
orthogonalization procedure \cite{arnoldi}.  The constant $\rho_*$ is
approximately equal to $2.0$.  The value of $C$ is chosen such that
$\norm{F-P_N}_{\li(\Omega)}\leq C\rho_*^{-N}$ for $N=0,1,...,55$. We
highlight the region bounded above by $u\cdot C$ in pink.
}
  \label{fig:elli_exp}
\end{figure}

\begin{figure}
    \centering
  \begin{subfigure}{0.49\textwidth}
      \centering
      \includegraphics[width=\textwidth]{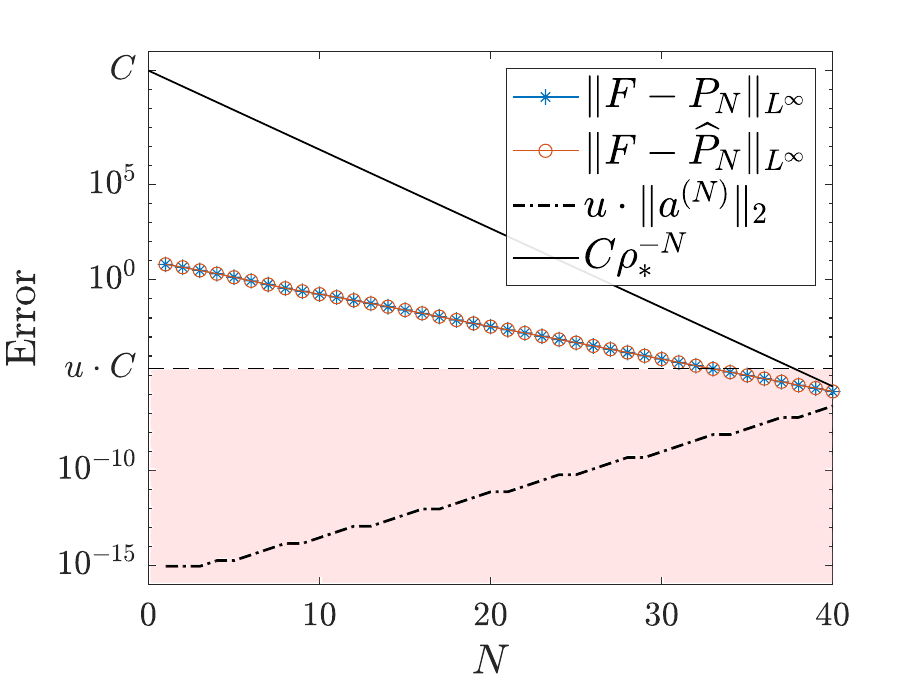}
      \caption{$F(z)=\frac{1}{z-1.1}$, $C=1\e{11}$}
    \end{subfigure}
    \begin{subfigure}{0.49\textwidth}
      \centering
      \includegraphics[width=\textwidth]{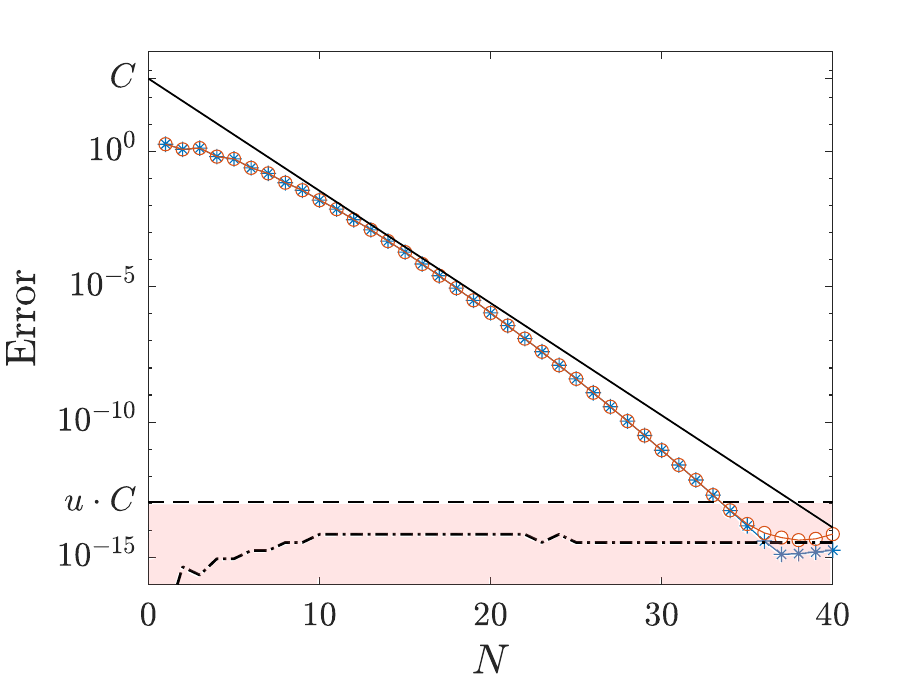}
      \caption{$F(z)=e^{-4z^2}$, $C=3\e{6}$}
    \end{subfigure}
    \begin{subfigure}{0.49\textwidth}
      \centering
      \includegraphics[width=\textwidth]{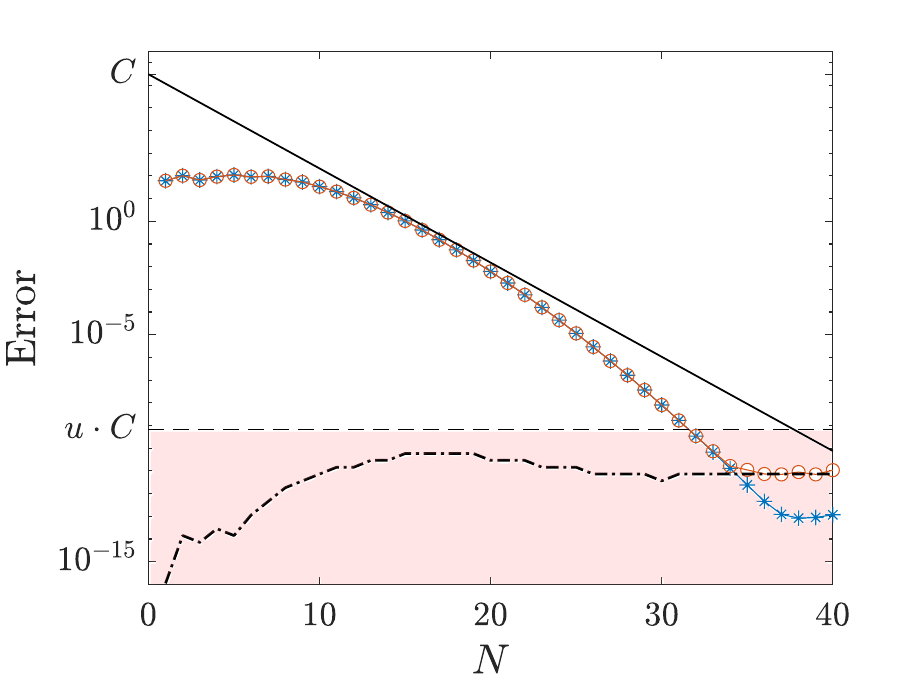}
      \caption{$F(z)=\cos(12z+1)$, $C=5\e{2}$}
    \end{subfigure}
    \begin{subfigure}{0.49\textwidth}
      \centering
      \includegraphics[width=\textwidth]{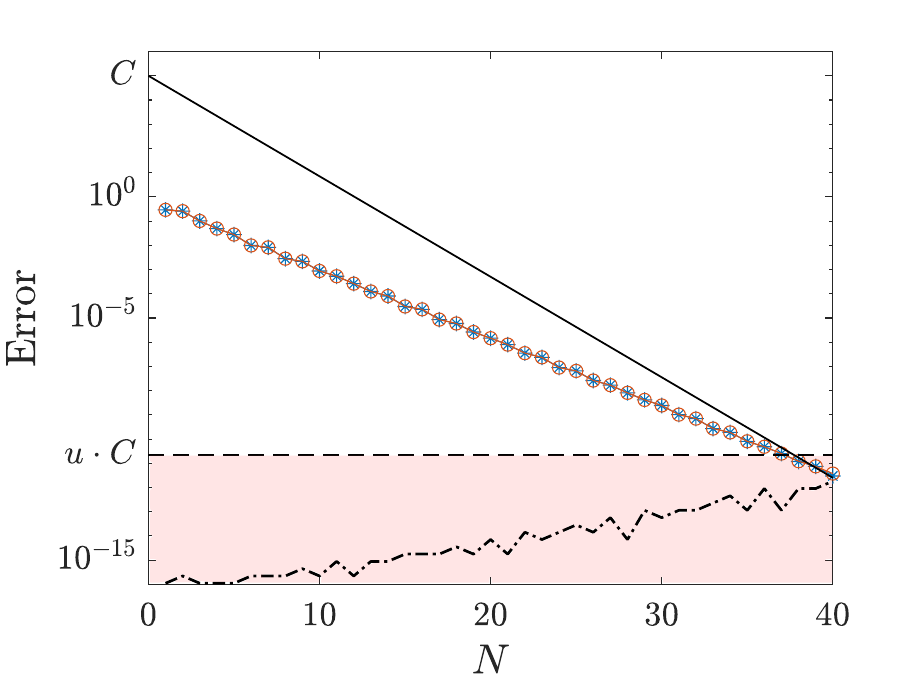}
      \caption{$F(z)=\tan(\tan(z)/2)$, $C=1\e{5}$}
    \end{subfigure}
\caption{{\bf Polynomial interpolation in the monomial basis over the
parabola~$\Omega$}.
We estimate $P_N$ by expressing it in a discrete
orthogonal polynomial basis, computed using the Vandermonde with Arnoldi
orthogonalization procedure \cite{arnoldi}.  
The constant $\rho_*$ is approximately equal to
$2.6$. The value of $C$ is chosen such that $\norm{F-P_N}_{\li(\Omega)}\leq
C\rho_*^{-N}$ for $N=0,1,...,40$. We highlight the region bounded above by
$u\cdot C$ in pink.}

  \label{fig:para_exp}
\end{figure}

\begin{figure}
    \centering
  \begin{subfigure}{0.49\textwidth}
      \centering
      \includegraphics[width=\textwidth]{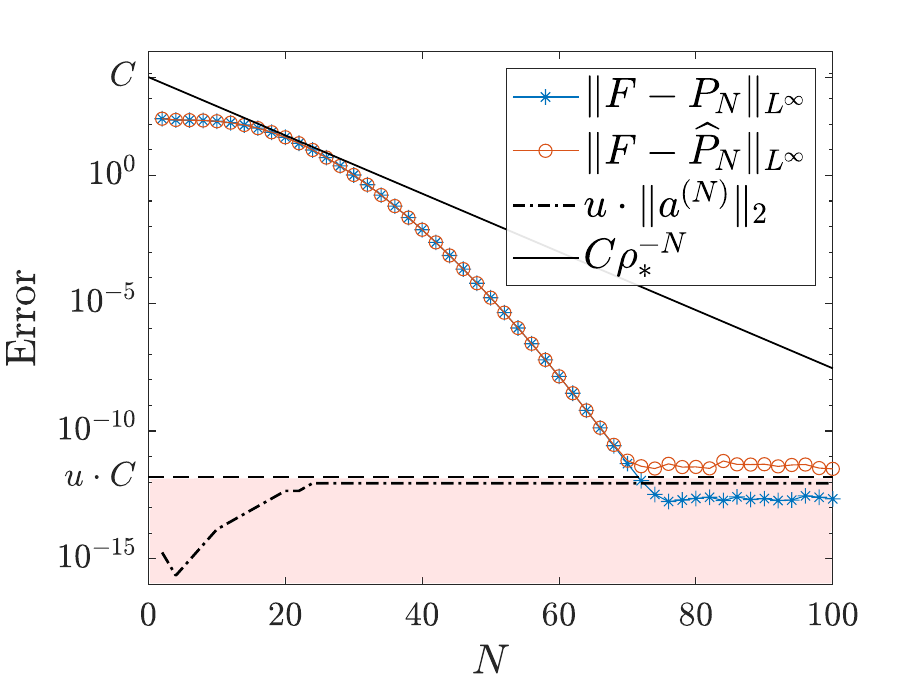}
      \caption{$F(z)=e^{-10z^2}$, $C=7\e{3}$}
    \end{subfigure}
    \begin{subfigure}{0.49\textwidth}
      \centering
      \includegraphics[width=\textwidth]{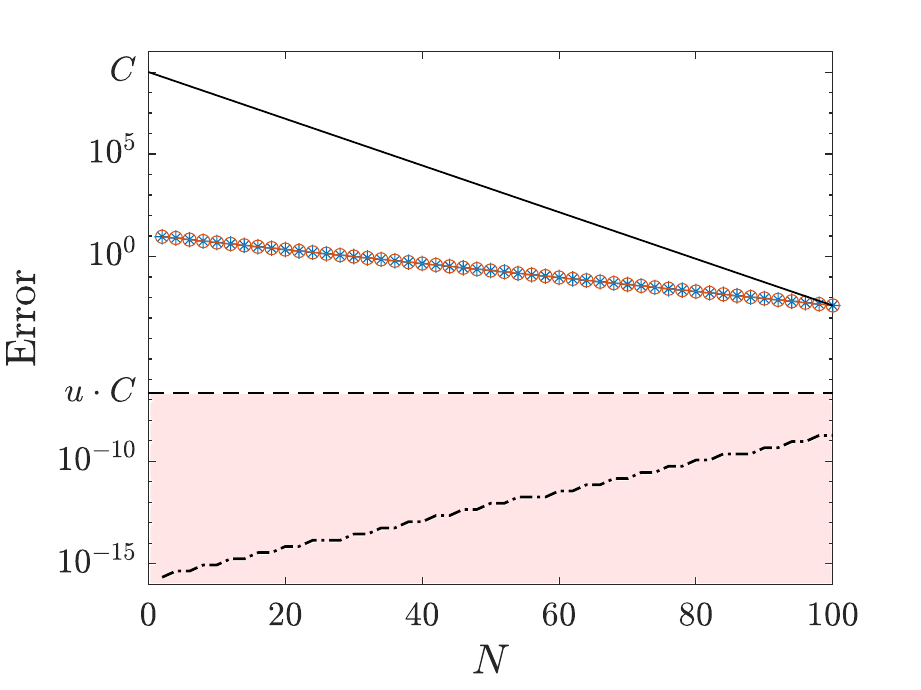}
      \caption{$F(z)=\frac{1}{z-0.8}$, $C=5\e{4}$}
    \end{subfigure}
    \begin{subfigure}{0.49\textwidth}
      \centering
      \includegraphics[width=\textwidth]{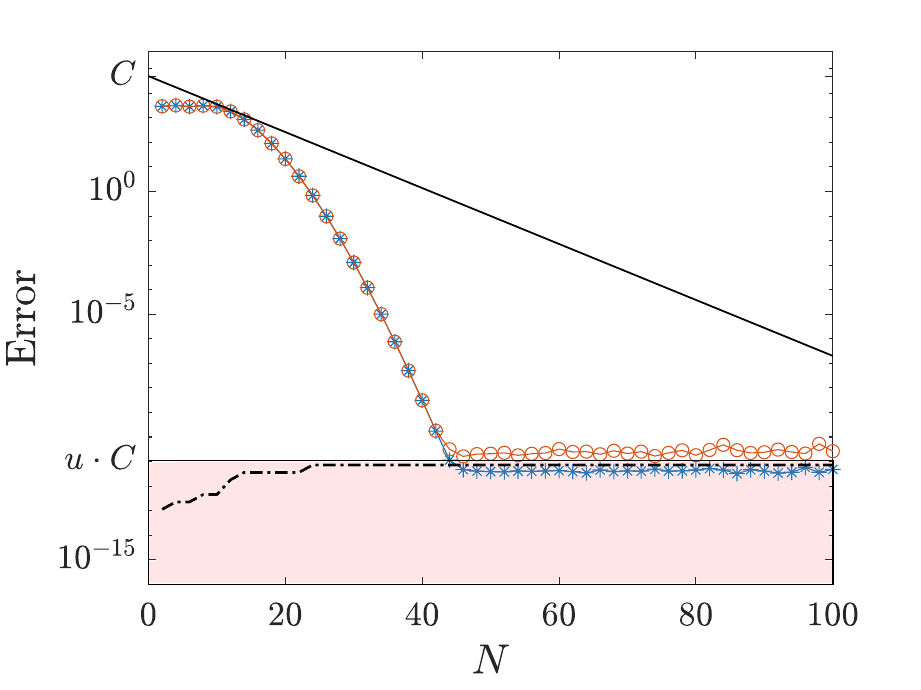}
      \caption{$F(z)=\cos(12z+1)$, $C=1\e{9}$}
    \end{subfigure}
    \begin{subfigure}{0.49\textwidth}
      \centering
      \includegraphics[width=\textwidth]{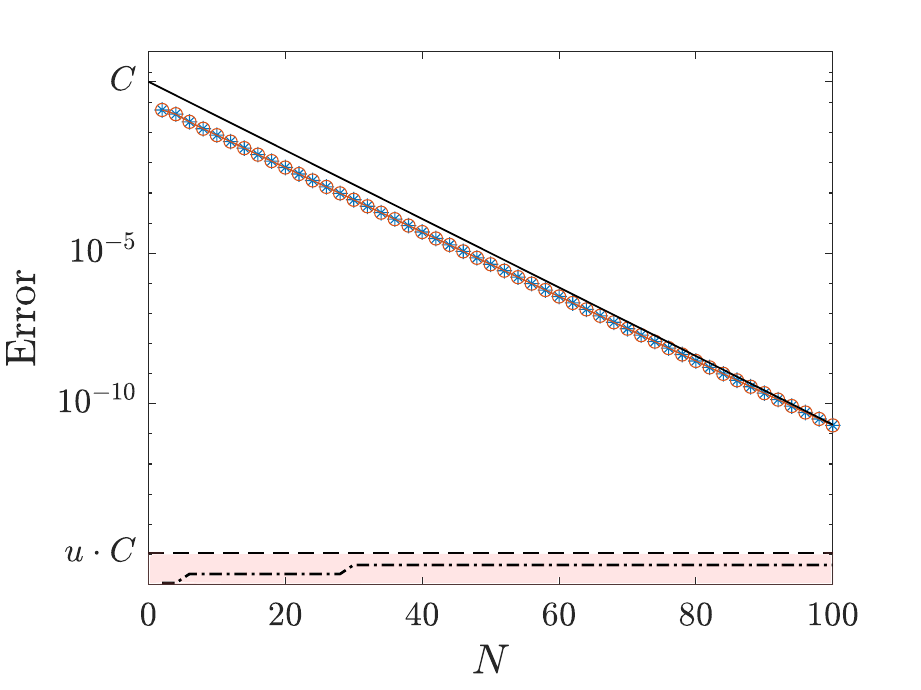}
      \caption{$F(z)=\tan(\tan(z))$, $C=5\e{0}$}
    \end{subfigure}
\caption{{\bf Polynomial approximation in the monomial basis over the
square~$\Omega$}. 
We estimate $P_N$ by expressing it in a discrete orthogonal polynomial
basis, computed using the Vandermonde with Arnoldi orthogonalization
procedure \cite{arnoldi}.  
The constant $\rho_*$ is approximately equal to
$1.3$ (see Figure \ref{fig:erho_box}). The value of $C$ is chosen such that
$\norm{F-P_N}_{\li(\Omega)}\leq C\rho_*^{-N}$ for $N=0,1,...,100$. We
highlight the region bounded above by $u\cdot C$ in pink. }
  \label{fig:box_exp}
\end{figure}

\begin{remark}
In certain applications \cite{helsing,barnett,bobbie}, the function
$F:\Gamma\to\C$ is defined by the formula $F(z):=\sigma(g^{-1}(z))$, where
$\Gamma$ is a simple smooth arc, $g:[-1,1]\to \C$ is an analytic function
that parameterizes $\Gamma$, and $\sigma:[-1,1]\to\C$ is analytic. In this
case, the analytic continuation of $F$ can have a singularity close to
$\Gamma$ even when $\sigma$ is entire, because the inverse of the
parameterization (i.e., $g^{-1}$) has so-called Schwarz singularities at
$z=g(t^*)$, where $g'(t^*)=0$. In \cite{barnett}, the authors show that, the
higher the curvature of the arc $\Gamma$, the closer the singularity induced
by $g^{-1}$ is to $\Gamma$. As a result, the approximation of such a
function $F$ by polynomials is efficient only when the curvature of $\Gamma$
is small.
\end{remark}

\section{Applications}
  \label{sec:app}
After justifying the use of a monomial basis for polynomial interpolation, a
natural question to ask is why one would want to use it in the first place.
For one, the monomial basis is the simplest polynomial basis to
manipulate.  For example, the evaluation of an $N$th degree polynomial
expressed in the monomial basis can be achieved using only $N$
multiplications through the application of Horner's rule. 
This evaluation can be further accelerated using Estrin's scheme
\cite{estrin}, which leverages instruction-level parallelism in modern CPUs.
This type of parallelism is typically unavailable when evaluating a
polynomial expressed in an orthogonal polynomial basis.

Besides this obvious advantage, we discuss some other applications below.

\subsection{Oscillatory integrals and singular integrals}
Given an oscillatory or singular function $\Psi:\Gamma\to \C$ and a smooth
function $F:\Gamma\to \C$ over a smooth simple arc $\Gamma\subset\C$,
the calculation of 
$\int_\Gamma \Psi(z) F(z) \d z$
by standard quadrature rules can be extremely expensive or inaccurate due to
the oscillations or the singularity of $\Psi$. However, when
$F$ is a monomial, there exists a wide range of integrals in this form 
that can be efficiently computed to high accuracy by
either analytical formulas or by recurrence relations, often derived using
integration by parts.  Therefore, when the smooth function $F$ is accurately
approximated by a monomial expansion of order $N$, such integrals can be
efficiently evaluated by the formula
$\sum_{k=0}^N a_k \Bigl(\int_\Gamma\Psi(z)z^k\d z\Bigr)$,
where $\{a_k\}_{k=0,1,\dots,N}$ denotes the coefficients of the monomial
expansion. Integrals of this type include the Fourier integral
$\int_a^b e^{i c x} F(x)\d x$ and various
layer potentials and Stieltjes transforms, e.g., $\int_{\Gamma}\log(z-\xi)
F(z) \d z$ and $\int_{\Gamma} \frac{F(z)}{z-\xi}\d z$, where $\xi\in\C$ is
given.  We refer the readers to \cite{iserles,iserles2} for more detailed
discussion on the Fourier integral, and to \cite{helsing,barnett,bobbie} for
more detailed discussion on the application of polynomial interpolation in
the monomial basis to the evaluation of layer potentials. Some interesting
applications can also be found in \cite{jiang,navier,tornberg}.

\subsection{Root finding}
Given a simply connected compact set $\Omega\subset\C$ and a function
$F:\Omega\to\C$, one method for computing the roots of $F$ over $\Omega$ is
to first approximate $F$ by a polynomial $P_N(z)=\sum_{j=0}^Na_j z^j$ to high
accuracy, and then to compute the roots of $P_N$ by calculating the
eigenvalues of the corresponding companion matrix, which we denote
by~$C(P_N)$.  Recently, a backward stable algorithm that computes the
eigenvalues of $C(P_N)$ in $\O(N^2)$ operations with $\O(N)$ storage has
been proposed in \cite{root}. This algorithm is backward stable in the sense
that the computed roots are the exact roots of a perturbed polynomial $\hat
P_N(z)=\sum_{j=0}^N (a_j+\delta a_j) z^j$, so that the backward error satisfies
$\norm{\delta a^{(N)}}_2 \lesssim u \norm{a^{(N)}}_2$, where $u$
denotes machine epsilon, $\delta a^{(N)}:=(\delta a_0,\delta a_1,\dots,
\delta a_N)^T$ and $a^{(N)}:=(a_0,a_1,\dots, a_N)^T$. It follows that 
$\norm{P_N-\hat P_N}_{\li(\Omega)} \leq \norm{\delta a^{(N)}}_1 \lesssim
u\sqrt{N+1} \norm{a^{(N)}}_2$.
When $\norm{a^{(N)}}_2 \approx \norm{P_N}_{\li(\Omega)}$, the computed roots
are backward stable in the polynomial $P_N$. This condition, however, does
not hold for all polynomials $P_N$. Furthermore, the calculation of the
coefficients $a^{(N)}$ from the function $F$, which involves the solution of
a Vandermonde system of equations, is highly ill-conditioned.
In this paper, we show that, when $F$ is sufficiently smooth (in the mild
sense discussed in Section \ref{sec:good}), it is possible to compute the
coefficients of an interpolating polynomial $P_N(z)=\sum_{j=0}^Na_j z^j$,
with $\norm{a^{(N)}}_2 \approx \norm{F}_{L^\infty(\Omega)}$, which
approximates $F$ uniformly to high accuracy, even when the condition number
of the Vandermonde matrix is close to the reciprocal of machine epsilon.
From this, we see that a backward stable root finder can be constructed by
combining the piecewise polynomial approximation procedure described in
Section \ref{sec:limit} with the algorithm presented in~\cite{root}.
Interestingly, by using the least-squares fitting procedure described in
Section \ref{sec:mono_curv} for the case when $\Omega$ is a square, an
adaptive root-finding algorithm for a complex analytic function in a square
domain, analogous to the one described in \cite{hanwen}, can be developed
utilizing solely the monomial basis.

\section{Discussion}
  \label{sec:dis}

Since the invention of digital computers, most research on the topic of
polynomial interpolation in the monomial basis has focused on showing that it
is a bad idea.  The condition number of Vandermonde matrices has been
studied extensively in recent decades (see~\cite{walter1} for a literature
review), and it is known that its growth rate is at least exponential,
unless the interpolation points are asymptotically distributed uniformly on
the complex unit circle centered at the origin~\cite{pan}.  As a result, the
computed monomial coefficients are generally highly inaccurate when the
dimensionality of the Vandermonde matrix is not small.  For this reason,
other better-conditioned bases are often used for polynomial
interpolation \cite{nick,arnoldi}. On the other hand, it has long been
observed that polynomial interpolation in the monomial basis produces highly
accurate approximations for many smooth functions (see, for example,
\cite{heath,helsing}).  This is because computing the monomial coefficients
inaccurately does not imply that the resulting interpolating polynomial is
necessarily a poor approximation, since it is
the backward error $\norm{V^{(N)}\hat a^{(N)}-f^{(N)}}_2$ of the numerical
solution $\hat a^{(N)}$ to the Vandermonde system~$V^{(N)}a^{(N)}=f^{(N)}$
that determines the approximation accuracy, and $\norm{V^{(N)}\hat
a^{(N)}-f^{(N)}}_2$ can be small even when the condition number
$\kappa(V^{(N)})$ is large.  It is easy to show that $\norm{V^{(N)}\hat
a^{(N)}-f^{(N)}}_2\lesssim \mach\cdot \norm{a^{(N)}}_2$, where $\mach$
denotes machine epsilon, from which one can show that the monomial
approximation error is bounded by the sum of the polynomial interpolation
error and the extra error term $\mach \cdot \norm{a^{(N)}}_2$.  In this
paper, we establish an upper bound for $\norm{a^{(N)}}_2$ that depends
on the polynomial interpolation errors
$\{\norm{F-P_n}_{\li}\}_{n=0,1,\dots,N}$.  Based on this bound, we find that
this extra error term is generally smaller than the polynomial interpolation
error, provided that the order of approximation is no larger than the
maximum order allowed by the constraint $\kappa(V^{(N)}) \lesssim
\frac{1}{u}$ (see Section \ref{sec:good}). This finding elucidates the
unexpectedly good performance of polynomial interpolation using the
monomial basis in practical applications.  In the rare situations
where the monomial basis results in a less accurate interpolant, we show
that such instances can be easily identified and corrected a posteriori,
leading to a robust algorithm for piecewise polynomial interpolation over
simply connected compact regions in the complex plane using the monomial
basis, with no extra error and with almost no extra cost (see Section
\ref{sec:limit}). As a by-product of our analysis, we derive a tight upper
bound for the condition number of any Vandermonde matrix (see Theorem
\ref{thm:cond2}), which includes several previously established results
\cite{walter2, walter1} as special cases. Finally, we present applications
where using the monomial basis for interpolation is particularly
advantageous (see Section \ref{sec:app}).

While not discussed in this paper, our results can be easily generalized to
higher dimensions and to polynomial approximants constructed using
least-square fitting. In \cite{zs2}, we study bivariate polynomial
interpolation in the monomial basis over a (possibly curved) triangle, and
demonstrate that similarly accurate approximations can be constructed for
orders up to 20, regardless of the triangle's aspect ratio.

\section{Acknowledgements}
We are deeply grateful to James Bremer, Daan Huybrechs, Andreas Kl\"oeckner,
Adam Morgan, Nick Trefethen, and the anonymous referees for their valuable
advice and insightful discussions.

\end{document}